\documentclass[
10pt,																
a4paper,                           						
english,															
parskip=half,													
]{scrartcl}														

\usepackage{babel}											
\usepackage[latin1]{inputenc}								
\usepackage[T1]{fontenc}									
\usepackage{lmodern}											
\usepackage{csquotes}										

\usepackage[
style=alphabetic,												
sorting=nyt,													
maxnames=5,														
minnames=3,														
backend=bibtex8,												
]{biblatex}														
\bibliography{PaperStab}									

\usepackage{xcolor}											
\usepackage{graphicx} 										
\usepackage{picinpar}										
\usepackage{multicol}										
\usepackage[margin = 3cm]{geometry}						
\DeclareGraphicsExtensions{.jpg, .gif, .mps}			
\usepackage{tikz}												
\usetikzlibrary{arrows}										
\usetikzlibrary{decorations.pathmorphing}				

\usepackage{amsmath}											
\usepackage{amssymb}											
\usepackage{amsfonts}										
\usepackage[amsmath,thmmarks,hyperref]{ntheorem}	
\usepackage{dsfont}											
\usepackage{makeidx}    	          					
\usepackage[refpage]{nomencl}								
\usepackage{hyperref}										
\usepackage{mathtools}										
\numberwithin{equation}{section}							
\allowdisplaybreaks[3]										

\newcommand*{\N}{\mathbb{N}}

\newcommand*{\R}{\mathbb{R}}
\newcommand*{\C}{\mathbb{C}}
\renewcommand*{\epsilon}{\varepsilon}
\renewcommand*{\phi}{\varphi}
\renewcommand*{\kappa}{\varkappa}
\let\temptheta\theta											
\let\theta\vartheta											
\let\vartheta\temptheta										
\let\temprho\rho												
\let\rho\varrho												
\let\varrho\temprho											
\renewcommand*{\tilde}{\widetilde}
\renewcommand*{\hat}{\widehat}
\renewcommand*{\bar}{\overline}

\renewcommand*{\d}{\partial}

\newcommand*{\dH}{\, d\mathcal{H}}
\newcommand*{\skp}[2]{\left\langle #1,#2 \right\rangle}
\newcommand*{\norm}[1]{\left\|#1\right\|}
\newcommand*{\mint}{-\!\!\!\!\!\!\int}
\newcommand{\modulo}[2]{\mbox{\raisebox{0.4ex}{\ensuremath{#1}\hspace{0.1ex}}{\raisebox{-0.1ex}{\Large /}}\raisebox{-0.2ex}{\ensuremath{#2}}}}

\DeclareMathOperator{\spn}{span}

\DeclareMathOperator{\im}{Im}
\DeclareMathOperator{\Vol}{Vol}
\DeclareMathOperator{\id}{Id}

\theoremstyle{plain}											
\theoremheaderfont{\normalfont\bfseries}				
\theorembodyfont{\normalfont}								
\theoremnumbering{arabic}									
\theoremseparator{:}											
\theoremsymbol{\ensuremath{\square}}					
\newtheorem{defi}{Definition}[section]
\newtheorem{rem}[defi]{Remark}

\theoremstyle{plain}											
\theoremheaderfont{\normalfont\bfseries}				
\theorembodyfont{\normalfont\itshape}					
\theoremnumbering{arabic}									
\theoremseparator{:}											
\theoremsymbol{}												
\newtheorem{thm}[defi]{Theorem}
\newtheorem{lemma}[defi]{Lemma}

\theoremstyle{nonumberplain}								
\theoremheaderfont{\normalfont}							
\theorembodyfont{\normalfont}								
\theoremseparator{:}											
\theoremsymbol{\ensuremath{\blacksquare}}				
\newtheorem{proof}{\underline{Proof}}

\begin{document}

\begin{titlepage}
\title{Stability of Spherical Caps under the Volume-Preserving Mean Curvature Flow with Line Tension}
\author{Helmut Abels\footnote{Fakult\"at f\"ur Mathematik,  
Universit\"at Regensburg,
93040 Regensburg,
Germany, e-mail: {\sf helmut.abels@mathematik.uni-regensburg.de}}\ \ 
Harald Garcke\footnote{Fakult\"at f\"ur Mathematik,  
Universit\"at Regensburg,
93040 Regensburg,
Germany, e-mail: {\sf harald.garcke@mathematik.uni-regensburg.de}}\ \
and Lars M\"uller\footnote{Fakult\"at f\"ur Mathematik,  
Universit\"at Regensburg,
93040 Regensburg,
Germany}}
\date{}
\end{titlepage}
\maketitle

\begin{abstract}
We show stability of spherical caps (SCs) lying on a flat surface, where the motion is governed by the volume-preserving Mean Curvature Flow (MCF). Moreover, we introduce a dynamic boundary condition that models a line tension effect on the boundary. The proof is based on the \textit{generalized principle of linearized stability}.
\end{abstract}

\small \textbf{Keywords:} mean curvature flow, stability, dynamic boundary conditions, line energy, spherical caps

\small \textbf{AMS subject classifcations:} 53C44, 35K55, 35B35, 37L15

\normalsize

\section{Introduction}\label{sec:Introduction}

The geometric evolution law $V_\Gamma = H_\Gamma$, meaning that the motion of a point on the surface in normal direction $V_\Gamma$ is equal to the mean curvature of the surface in that point, has many applications in geometry, physics and materials science. For example the evolution of grain boundaries is governed by mean curvature flow. First important results by mean curvature flow are due to Brakke \cite{Bra78}, Gage and Hamilton \cite{GH86} and Huisken \cite{Hui84}. The flow $V_\Gamma = H_\Gamma$ is known as the mean curvature flow (MCF) and with the additional condition of volume conservation, this flow appears e.g. as a model for surface attachment limited kinetics (SALK), see e.g. Cahn and Taylor \cite{CT94}. In 1987 it was Huisken \cite{Hui87} and in 1998 Escher and Simonett \cite{ES98b}, who provided important results concerning the volume-preserving MCF.
Volume preserving mean curvature flow of rotationally symmetric surfaces
with boundary contact has been studied by Athanassenas \cite{Ath98}, see
also the recent work \cite{AthKan12}. Stability of cylinders
under volume preserving mean curvature flow with
a $90$-degree angle condition at an external boundary 
has been studied  by Hartley \cite{Hart13}.

This paper is devoted to stability of spherical caps in $\R^3$ that
lie on a flat surface $\R^2 \times \{0\}$. Modelling a drop of liquid
or a soap bubble physics suggest that the air-liquid-interface, which
can be viewed as an evolving hypersurface, tends to minimize its
area. If such a surface gets into contact with some fixed impermeable
boundary layer the mass conservation law makes it necessary to demand
a constant volume condition. The occurring contact angle is mainly
determined by the material constants and thereby the wettability of
the container. The free energy is given as
\begin{align*}
\mathcal{E}(\Gamma):=\int_\Gamma 1 \dH^2 - \int_D a \dH^2
\end{align*}
where $\dH^d$, $d \in \{1, 2\}$ denotes integration with respect to
the $d$-dimensional Hausdorff measure, $a>0$ is a constant and $D$ is
the wetted region. The first term measures surface energy and the
second term accounts for contact energy. Then the angle
$\alpha$ at the junction line is determined by $\cos\alpha = -a$, see
Figure~\ref{fig:SphericalCaps} and Finn \cite{finn}. We remark that
the contact angle, which is typically used in physics, is given as
 $\gamma=\pi-\alpha$. However,
in particular on small length scales, a second
effect is entering the scenery, namely the line tension (cf. Section 1
of \cite{BLK06}). This effect penalizes long contact curves and forces
the drop or bubble to detach more from the boundary. The governing
energy for a hypersurface $\Gamma \subseteq \R^3$ with contact to
$\R^2 \times \{0\}$ is in this case given as
\begin{align*}
  \mathcal{F}(\Gamma) := \int_\Gamma 1 \dH^2- \int_D
  a \dH^1 + \int_{\d \Gamma} b \dH^1,
\end{align*}
where $b > 0$ is a constant. The last term accounts for line energy
effects. For a mathematical treatment of variational problems related
to $\mathcal{F}$ we refer to Morgan \cite{Mor94a,Mor94b},
Morgan and Taylor \cite{MT91} and Cook \cite{Coo85}. The motion of
such an evolving hypersurface $\Gamma$, which is schematically
illustrated in Figure \ref{fig:GammaInOmega}, will be a suitable
gradient flow of the energy $\mathcal{F}$.

\begin{figure}[htbp]
	\centering
	\begin{tikzpicture}[scale=1,>=stealth]
		\draw[shift={(-4,0)}] (-1.7,0) .. controls (-3,2) and (-1,4) .. (0,3) node[above=8pt,right=0pt] {$\Gamma(t)$};
		\draw[shift={(-4,0)}] (0,3) .. controls (1,2) and (3.5,1) .. (2.5,0);
		\draw[shift={(-4,0)}] (-3,0) node[below=9pt,right=12pt] {$\d \Omega$} -- (3,0);
		
		\draw [->,decorate,decoration={snake,amplitude=0.4mm,segment length=2mm,post length=1mm}] (-1,1.5) -- node[above] {$t \longrightarrow T$} (1,1.5);
		
		\draw[shift={(4,0)}] (2,0) arc (-23.5:203.5:2.3) node[above=76pt,right=59pt] {$\Gamma(T)$};
		\draw[shift={(4,0)}] (-3,0) node[below=9pt,right=12pt] {$\d \Omega$} -- (2.5,0);
	\end{tikzpicture}
	\caption{Evolving hypersurface $\Gamma(t)$ in contact with a container boundary $\d \Omega$}
	\label{fig:GammaInOmega}
\end{figure}
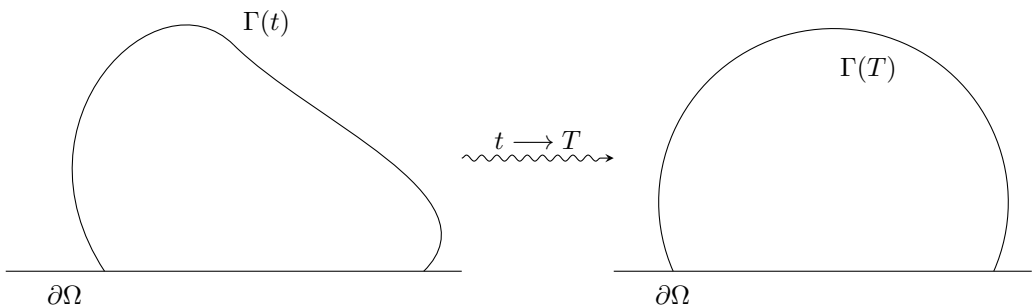

During this motion it seems artificial to prescribe the boundary curve or the contact angle, since an arbitrary drop or bubble, which is brought in contact with a solid container, will not instantly have a boundary curve or contact angle that is energetically minimal. Prescribing the contact curve or the contact angle would correspond to Dirichlet or Neumann boundary conditions, respectively. Instead of doing so, we will impose dynamic boundary conditions to allow the contact angle to change and the boundary curve to move. We will prove stability for spherical caps, which are the simplest stationary surfaces of the given flow.
It will turn out that the set of equilibria forms a three-dimensional manifold. This is due to the fact that we have two degrees of freedom with respect to horizontal translations and another degree of freedom stems from a change in the enclosed volume. As a consequence the classical theory of linearized stability does not apply and we have to use the generalized principle of linearized stability as introduced by Pr\"uss, Simonett and Zacher in \cite{PSZ09}.

After some elementary results on spherical caps in Section \ref{sec:SphericalCaps}, we will introduce in Section \ref{sec:GPLS} the generalized principle of linearized stability, which is the basis of out stability analysis. We will also introduce the abstract setting concerning the involved operators and spaces. Before we can apply the principle in Section \ref{sec:Application} by checking the four assumptions that are needed and formulate our final stability result in Theorem \ref{thm:StabilitySCs}, we need some perturbation result from semigroup theory to deal with the non-locality of the volume-preserving MCF in Section \ref{sec:MaxReg}. In order to show stability of stationary solutions we in particular need to study the spectrum of the surface Laplacian on the spherical cap with non-standard boundary conditions.

\section{Spherical Caps}\label{sec:SphericalCaps}

We want to consider the motion of an evolving hypersurface $\Gamma = (\Gamma(t))_{t \in I}$ inside the upper half space $\Omega := \R^3_+ := \{(x,y,z) \in \R^3 \mid z > 0\}$, which remains in contact with the boundary $\d \Omega$ given as the $x$-$y$-plane. With $V \subseteq \Omega$ we want to denote the region between $\Gamma$ and $\d \Omega$ and $D$ shall be defined as $D := \d V \cap \d \Omega$. In particular, we have $\d D = \d \Gamma$. For a point $p \in \Gamma$ we denote the exterior normal to $\Gamma$ in $p$ by $n_\Gamma(p)$, where the term ``exterior'' should be understood with respect to $V$. Obviously, the normal $n_D$ of $V$ on $D$ is the negative of the third unit vector. Furthermore, for a point $p \in \d \Gamma$ we want to denote by $n_{\d \Gamma}$ and $n_{\d D}$ the outer conormals to $\d \Gamma$ and $\d D$ in $p$. In addition, we define the tangent vector to the curve $\d \Gamma$ by $\vec{\tau}(p) := \frac{c'(t)}{|c'(t)|}$ and its curvature vector by $\vec{\kappa}(p) := \frac{1}{|c'(t)|} \left(\frac{c'(t)}{|c'(t)|}\right)'$, where $c: (t - \epsilon, t + \epsilon) \longrightarrow \d \Gamma$ is a parametrization of $\d \Gamma$ around $p \in \d \Gamma$ with $c(t) = p$.

For two parameters $a \in \R$ and $b > 0$ the motion of $\Gamma$ shall be driven by the volume-preserving mean curvature flow with a dynamic boundary condition
\begin{align}\label{eq:SimFlow1}
	V_\Gamma(t) &= H_\Gamma(t) - \bar{H}(t), \\ \label{eq:SimFlow2}
	v_{\d D}(t) &= a + b \kappa_{\d D}(t) + \cos(\alpha(t)).
\end{align}
Here $V_\Gamma$ is the normal velocity, $H_\Gamma$ is the mean curvature given as the sum of the principle curvatures and $\bar{H}(\rho(t))$ is the mean value of the mean curvature, defined as
\begin{align*}
	\bar{H}(t) := \mint_{\Gamma(t)} H_{\Gamma(t)}(t,p) \dH^2 := \frac{1}{\int_{\Gamma(t)}\limits 1 \dH^2} \int_{\Gamma(t)} H_{\Gamma(t)}(t,p) \dH^2.
\end{align*}
The term $\bar{H}(t)$ is exactly the right choice to make this flow volume-preserving as we can see by calculating the first variation of the volume
\begin{align*}
	\frac{d}{dt} \Vol(\Gamma(t)) = \int_{\Gamma(t)} V_{\Gamma(t)} \dH^2 = \int_{\Gamma(t)}(H_{\Gamma(t)} - \bar{H}) \dH^2 = \int_{\Gamma(t)}  H_{\Gamma(t)} \dH^2 - \bar{H} \int_{\Gamma(t)} 1 \dH^2 = 0.
\end{align*}
Moreover, $v_{\d D}$ is the normal boundary velocity of the contact curve, $\kappa_{\d D}$ is its geodesic curvature with respect to $\d \Omega$ and $\alpha$ is the contact angle of $\Gamma$ and $D$. We assume throughout the whole paper
\begin{align}\label{eq:AngleAssumption}
	0 < \alpha(p) < \pi \qquad \text{for all } p \in \d \Gamma,
\end{align}
which will be crucial later on.

Stationary hypersurfaces of (\ref{eq:SimFlow1})-(\ref{eq:SimFlow2}) have to satisfy
\begin{align}\label{eq:NecessrayConditions1}
	0 &= H_\Gamma - \bar{H} & &\text{ in } \Gamma, \\ \label{eq:NecessrayConditions2}
	0 &= a + b \kappa_{\d D} + \skp{n_\Gamma}{n_D} & &\text{ on } \d \Gamma.
\end{align}
Looking at the first equation we see that spherical caps - which we will call SCs hereafter - satisfy this equation. This motivates our aim to investigate SCs in this paper.

\begin{figure}[htbp]
	\centering
	\begin{tikzpicture}[scale=0.96,>=stealth]
		\filldraw[shift={(-4,0)},fill=green!20!white, draw=green!50!black] (2.3,0) -- (2.3,-0.7) arc (270:336.5:0.7) -- cycle;
		\draw[shift={(-4,0)},thin,white] (2.3,0) circle (0pt) node[below=10pt,right=0pt,color=green!50!black] {$\alpha$};
		\filldraw[shift={(-4,0)},fill=green!20!white, draw=green!50!black] (0,1) -- (0,0.3) arc (270:336.5:0.7) -- cycle;
		\draw[shift={(-4,0)},thin,white] (0,1) circle (0pt) node[below=10pt,right=0pt,color=green!50!black] {$\alpha$};
		\draw[shift={(-4,0)}] (0,1) -- node[above] {$R$} +(-23.5:2.5) arc (-23.5:203.5:2.5) node[above=50pt,left=3pt] {$\Gamma$};
		\draw[shift={(-4,0)}] (-2.3,0) node[below=7pt,right=10pt] {$\d \Omega$} -- (2.3,0);
		\draw[shift={(-4,0)}] (0,0) -- node[left] {$H$} (0,1);
		\draw[shift={(-4,0)}] (0,0) -- node[below] {$r$} (2.3,0);
		\fill[shift={(-4,0)},thick] (0,1) circle (2pt) node[above=25pt,left=7pt] {$V$};
		\draw[shift={(-4,0)},thick,->] (2.3,0) -- +(1.5,0) node[above] {$n_{\d D}$};
		\draw[shift={(-4,0)},thick,->] (2.3,0) -- +(0,-1.5) node[below] {$n_D$};
		\draw[shift={(-4,0)},thick,->] (2.3,0) -- +(-113.5:1.5) node[left] {$n_{\d \Gamma}$};
		\draw[shift={(-4,0)},thick,->] (2.3,0) -- +(-23.5:1.5) node[below] {$n_\Gamma$};
		
		\filldraw[shift={(3.5,0)},fill=green!20!white, draw=green!50!black] (2.3,0) -- (2.3,-1) arc (270:383.5:1) -- cycle;
		\draw[shift={(3.5,0)},thin,white] (2.3,0) circle (0pt) node[below=10pt,right=8pt,color=green!50!black] {$\alpha$};
		\draw[shift={(3.5,0)}] (0,-1) -- node[below] {$R$} +(23.5:2.5) arc (23.5:156.5:2.5) node[above=23pt,right=0pt] {$\Gamma$};
		\draw[shift={(3.5,0)}] (-2.3,0) node[below=7pt,right=10pt] {$\d \Omega$} -- (2.3,0);
		\draw[shift={(3.5,0)}] (0,0) -- node[left] {$H$} (0,-1);
		\draw[shift={(3.5,0)}] (0,0) -- node[above] {$r$} (2.3,0);
		\fill[shift={(3.5,0)},thick] (0,-1) circle (2pt) node[above=45pt,left=7pt] {$V$};
		\draw[shift={(3.5,0)},thick,->] (2.3,0) -- +(1.5,0) node[right] {$n_{\d D}$};
		\draw[shift={(3.5,0)},thick,->] (2.3,0) -- +(0,-1.5) node[below] {$n_D$};
		\draw[shift={(3.5,0)},thick,->] (2.3,0) -- +(-66.5:1.5) node[right] {$n_{\d \Gamma}$};
		\draw[shift={(3.5,0)},thick,->] (2.3,0) -- +(23.5:1.5) node[right] {$n_\Gamma$};
	\end{tikzpicture}
	\caption{Notation for spherical caps}
	\label{fig:SphericalCaps}
\end{figure}
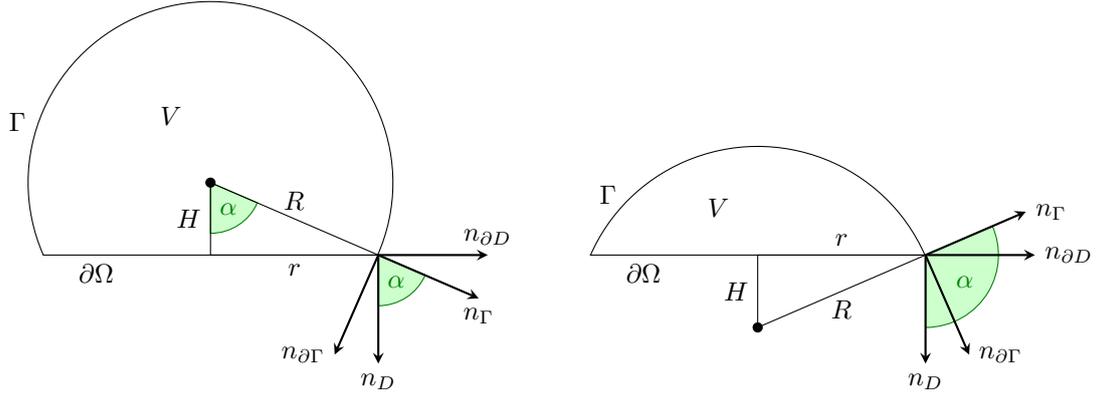

The radius of the SC shall be denoted by $R$ and the height of its center by $H$. Our convention will be that an SC whose center is above $\d \Omega$ has a positive $H$ and if the center is below the $x$-$y$-plane we declare $H$ to be negative. The contact curve $\d \Gamma = \d D$ in this case is obviously an ordinary circle whose radius will be denoted by $r$. For a sketch of this notation see Figure \ref{fig:SphericalCaps}. Note that $\alpha$ is constant in this situation. Our sign convention for $H$ leads to
\begin{align}\label{eq:RAndH}
	H = r \cot(\alpha) \quad \text{ and } \quad R = \frac{r}{\sin(\alpha)}.
\end{align}
The triple $(\vec{\tau}, n_D, n_{\d D})$ is supposed to be a right-handed orthonormal basis, hence we have to parametrize the contact circle clockwise looking down from the north pole. This causes the arc length derivative of $\vec{\tau}$, which is the curvature vector $\vec{\kappa}$, to point inwards and away from $n_{\d D}$. Therefore the geodesic curvature of the contact curve is negative, which means $\kappa_{\d D} = -\frac{1}{r}$.

An SC is a stationary SC - which we denote by SSC - if it satisfies (\ref{eq:NecessrayConditions2}), which simplifies to
\begin{align}\label{eq:StationaryAngle}
	\cos(\alpha) = \frac{b}{r} - a.
\end{align}
Looking at (\ref{eq:StationaryAngle}) we immediately see that $-1 < \frac{b}{r} - a < 1$ has to hold, where we exclude the cases $\cos(\alpha) = \pm 1$, because they correspond to the two degenerate cases of a SC that has fully detached from $\d \Omega$ or has completely spread out to become flat. We can therefore distinguish the following cases:
\begin{enumerate}
	\item Case ($a \leq -1$): Here we should have $a - 1 < \frac{b}{r} < a + 1 \leq 0$, which is not possible since $b > 0$ and $r > 0$.
	\item Case ($-1 < a \leq 1$): Here the left inequality of $a - 1 < \frac{b}{r} < a + 1$ is always satisfied and we have to ensure $r \in \left(\frac{b}{a+1}, \infty\right)$.
	\item Case ($a > 1$): Now both inequalities restrict $r$ and we obtain $r \in \left(\frac{b}{a+1}, \frac{b}{a-1}\right)$.
\end{enumerate}
This shows that there are definitely no SSCs if $a \leq -1$ and hence in the following considerations we assume
\begin{align}\label{eq:ParameterEquations}
	a > -1 \qquad \text{ and } \qquad b > 0.
\end{align}
The range that $r$ can attain is given by
\begin{align}\label{eq:RangeOfR}
	I_r := \begin{cases}
				 \left(\frac{b}{a+1}, \infty\right)        & \text{ if } -1 < a \leq 1, \\
				 \left(\frac{b}{a+1}, \frac{b}{a-1}\right) & \text{ if } a > 1.
			 \end{cases}
\end{align}
The term $\cos(\alpha) = \frac{b}{r} - a$ is obviously strictly decreasing in $r$. Thus
\begin{align*}
	\cos(\alpha) = \frac{b}{r} - a \uparrow 1 \qquad \text{ for} \quad r \downarrow \frac{b}{a+1}
\end{align*}
and in case $a > 1$ we furthermore have
\begin{align*}
	\cos(\alpha) = \frac{b}{r} - a \downarrow -1 \qquad \text{ for} \quad r \uparrow \frac{b}{a-1},
\end{align*}
which shows that all contact angles $\alpha \in (0,\pi)$ are possible. \\
Looking at the case $-1 < a \leq 1$ we obtain the limit
\begin{align*}
	\cos(\alpha) = \frac{b}{r} - a \downarrow -a \qquad \text{ for } r \longrightarrow \infty
\end{align*}
and therefore only $\alpha \in (0,\arccos(-a))$ can appear as contact angle of an SSC. So we obtain
\begin{align}\label{eq:RangeOfAlpha}
	I_\alpha := \begin{cases}
						\left(0, \arccos(-a)\right) & \text{ if } -1 < a \leq 1, \\
						\left(0, \pi\right)			 & \text{ if } a > 1
					\end{cases}
\end{align}
as the feasible range for $\alpha$.

\begin{figure}
	\centering
	\begin{tikzpicture}
		\draw[very thin] (-6:1.5) arc (-6:195:1.5);
		\draw[very thin] (2:2) arc (2:186:2);
		\draw[very thin] (13.5:3) arc (13.5:175:3);
		\draw[very thin] (18:3.5) arc (18:170:3.5);
		\draw[] (0,1.67) --  node[right=15pt,below=-8pt,fill=white]{$\rho(t,q)$} (0,2.5);
		\draw[] (0,1.67) --  (0,2.5);
		\draw[very thick] (8:2.5) arc (8:180:2.5) node[below=2pt] {$\Gamma^*$};
		\draw[thick] (3.05,0.8) .. controls (2,2) and (-3,2) .. (-3.3,0.47) node[below=10pt,left=-5pt] {$\Gamma_\rho(t)$};
		\fill[thick] (0,2.5) node[above] {$q$} circle (2pt);
		\draw[thick] (4,2) arc (-30:-150:5) node[left] {$\d \Omega$};
		\draw[shift={(0.1,-0.5)},<->] (3.35,1.3) arc (-44:-70:5) node[above=5pt,right=25pt] {$w$};
	\end{tikzpicture}
	\caption{The distance function $\rho$}
	\label{fig:Rho}
\end{figure}
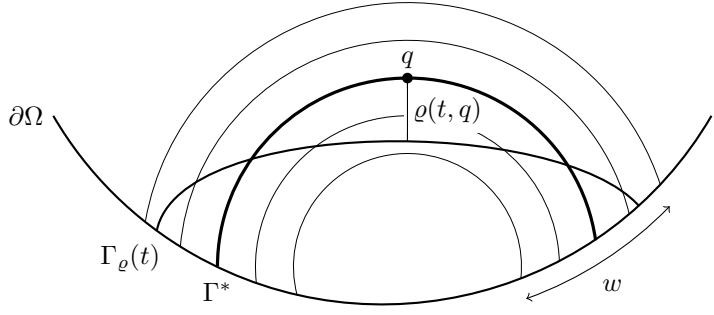

Our next goal is to perform a Hanzawa transformation and write the evolving hypersurface as a family of graphs of a time-dependent ``distance-like'' function $\rho: [0,T] \times \Gamma^* \longrightarrow (-\epsilon_0, \epsilon_0)$ over a fixed reference hypersurface $\Gamma^*$, which we assume to be an SSC. The distance $\rho(t,q)$ of a point $q \in \Gamma^*$ shall be measured in normal direction as indicated in Figure \ref{fig:Rho}. But this is not possible for a boundary point $q \in \d \Gamma^*$. In our situation we need some correction term to ensure that the evolving hypersurface $\Gamma$ neither crosses $\d \Omega$ nor detaches from it.

For this purpose we introduce a curvilinear coordinate system $\Psi$
as introduced by Vogel \cite{Vog00}, see also \cite{Depner, DepnerGarcke}, because with its help we can write an evolving hypersurface as a graph over the fixed reference SSC $\Gamma^*$.

For $q \in \d \Gamma^*$ and $w \in (-\epsilon_0, \epsilon_0)$ with $\epsilon_0 > 0$ sufficiently small there is a smooth function
\begin{align*}
	\tilde{t}: \d \Gamma^* \times (-\epsilon_0, \epsilon_0) \longrightarrow \R: (q,w) \longmapsto \tilde{t}(q,w)
\end{align*}
such that
\begin{align*}
	q + w n_{\Gamma^*}(q) + \tilde{t}(q,w) n_{\d \Gamma^*}(q) \in \d \Omega \qquad \forall \ w \in (-\epsilon_0, \epsilon_0).
\end{align*}
Obviously, $\tilde{t}(q,0) = 0$ and we can extend $\tilde{t}$ smoothly to a function
\begin{align*}
	t: \Gamma^* \times (-\epsilon_0, \epsilon_0) \longrightarrow \R: (q,w) \longmapsto t(q,w)
\end{align*}
such that $t(q,0) = 0$ for all $q \in \Gamma^*$. Next we will use a special coordinate system
\begin{align}\label{eq:CurvilinearCoordinates}
	\Psi: \Gamma^* \times (-\epsilon_0, \epsilon_0) \longrightarrow \Omega: (q,w) \longmapsto \Psi(q,w) := q + w n_{\Gamma^*}(q) + t(q,w) T(q),
\end{align}
where $T: \Gamma^* \longrightarrow \R^3$ is a tangential vector field, that coincides with $n_{\d \Gamma^*}$ on $\d \Gamma^*$ and vanishes outside a small neighborhood of $\d \Gamma^*$. By construction this curvilinear coordinate system satisfies $\Psi(q,0) = q$ for all $q \in \Gamma^*$ and $\Psi(q,w) \in \d \Omega$ for all $q \in \d \Gamma^*$ and all $w \in (-\epsilon_0, \epsilon_0)$. The existence of such a curvilinear coordinate system is guaranteed due to (\ref{eq:AngleAssumption}) which is a result from \cite{Vog00}, where one can also find more technical details concerning $\Psi$.

We define our evolving hypersurface $\Gamma := (\Gamma_\rho(t))_{t \in I}$ via $\Gamma_\rho(t) := \im(\Psi(\bullet,\rho(t,\bullet)))$ and observe that by our construction of $\Psi$ we have $\Gamma_0(t) = \Gamma^*$ for all $t \in [0,\infty)$. We assume that $\rho$ is smooth enough such that all the upcoming terms are defined.

The precise flow that we want to consider is
\begin{align}\label{eq:Flow1}
	V_\Gamma(\Psi(q,\rho(t,q))) &= H_\Gamma(\Psi(q,\rho(t,q))) - \bar{H}(t) & &\text{in } \Gamma^*, \\ \label{eq:Flow2}
	v_{\d D}(\Psi(q,\rho(t,q))) &= a + b \kappa_{\d D}(\Psi(q,\rho(t,q))) & & \notag \\
										 &+ \skp{n_\Gamma(\Psi(q,\rho(t,q)))}{n_D(\Psi(q,\rho(t,q)))} & &\text{on } \d \Gamma^*.
\end{align}

For later purposes the linearization of (\ref{eq:Flow1})-(\ref{eq:Flow2}) will be crucial. The calculations leading to the linearization given by
\begin{align}\label{eq:LinearFlow1}
	\d_t \rho(t) & = \Delta_{\Gamma^*} \rho(t) + |\sigma^*|^2 \rho(t) - \mint_{\Gamma^*} (\Delta_{\Gamma^*} + |\sigma^*|^2) \rho(t) \dH^2 \hspace*{19mm} \text{ in } [0,T] \times \Gamma^*, \\ \label{eq:LinearFlow2}
	\d_t \rho(t) & = -\sin(\alpha^*)^2 (n_{\d \Gamma^*} \cdot \nabla_{\Gamma^*} \rho(t)) + \sin(\alpha^*) \cos(\alpha^*) II_{\Gamma^*}(n_{\d \Gamma^*},n_{\d \Gamma^*}) \rho(t) \notag \\ 
					 & + b \sin(\alpha^*) \rho_{\sigma\sigma}(t) - b \sin(\alpha^*) \kappa_{\d D^*} \skp{\vec{\tau}^*}{(n_{\d D^*})_\sigma} \rho(t) \hspace*{17mm} \text{ on } [0,T] \times \d \Gamma^*
\end{align}
can be found in Section 2 of \cite{Mue13}, see also
\cite{AGMPreprint1,Depner, DepnerGarcke}.

After we know which conditions have to hold for the contact angle $\alpha$ and the radius $r$ and how we describe the motion of the hypersurface we can now start with the stability analysis of SCs.

\section{The Generalized Principle of Linearized Stability}\label{sec:GPLS}

Since we assumed that the reference hypersurface $\Gamma^*$ is an SSC our goal is to prove the stability of the zero-solution $\rho \equiv 0$ for (\ref{eq:Flow1})-(\ref{eq:Flow2}). To this end we will use the generalized principle of linearized stability (GPLS) as presented in \cite{PSZ09} and start by introducing the abstract framework.

We begin by transforming the equations (\ref{eq:Flow1})-(\ref{eq:Flow2}) into an abstract evolution equation of the form
\begin{align}\label{eq:AbstractEvolutionEquation1}
	\d_t v(t) + A(v(t)) v(t) &= F(v(t)) \qquad t \in \R_+, \\ \label{eq:AbstractEvolutionEquation2}
							  v(0) &= v_0
\end{align}
as given by (2.1) in \cite{PSZ09}. As in Lemma 2.10 of \cite{Mue13} we can extract $\d_t \rho$ from $V_\Gamma$ and transform (\ref{eq:Flow1}) into
\begin{align*}
	\d_t \rho(t,q) &= \frac{H_\Gamma(\rho(t,q)) - \bar{H}(\rho(t))}{n_\Gamma(\rho(t,q)) \cdot \d_w \Psi(q,\rho(t,q))} \qquad \text{ in } \R_+ \times \Gamma^*.
\end{align*}
Analogously we transform (\ref{eq:Flow2}) into
\begin{align*}
	\d_t \rho(t,q) = \frac{a + b \kappa_{\d D}(\rho(t,q)) + \skp{n_\Gamma(\rho(t,q))}{n_D(\rho(t,q))}}{n_{\d D}(\rho(t,q)) \cdot \d_w \Psi(q,\rho(t,q))} \qquad \text{ on } \R_+ \times \d \Gamma^*.
\end{align*}
For $4 < p < \infty$ we define
\begin{align*}
	X_1 & := \mathcal{D}(A) := \left\{\left.(u,\varrho) \in W^2_p(\Gamma^*;\R) \times W^{3-\frac{1}{p}}_p(\d \Gamma^*;\R) \right| u|_{\d \Gamma^*} = \varrho\right\}, \\
	X_0 & := W := L_p(\Gamma^*;\R) \times W^{1-\frac{1}{p}}_p(\d \Gamma^*;\R),
\end{align*}
where $X_1 \hookrightarrow X_0$ as demanded in \cite{PSZ09}. By
interpolation results as in Theorem 4.3.1/1 of \cite{Tri78}, which also
hold on surfaces, we obtain
\begin{align*}
	\left(L_p(\Gamma^*), W^2_p(\Gamma^*)\right)_{1-\frac{1}{p},p} & = W^{2-\frac{2}{p}}_p(\Gamma^*), \\
	\left(W^{1-\frac{1}{p}}_p(\d \Gamma^*), W^{3-\frac{1}{p}}_p(\d \Gamma^*)\right)_{1-\frac{1}{p},p} & = W^{3-\frac{3}{p}}_p(\d \Gamma^*).
\end{align*}
Corollary 1.14 of \cite{Lun09} shows that functions $(u,\varrho)$ belonging to $\left(X_0, X_1\right)_{1-\frac{1}{p},p}$ are traces at $t = 0$ of functions $v \in W^1_p(\R_+;X_0) \cap L_p(\R_+;X_1) \hookrightarrow BUC([0,\infty);\left(X_0, X_1\right)_{1-\frac{1}{p},p})$. This proves that the trace condition $u|_{\d \Gamma^*} = \varrho$ carries over from $X_1$ to the interpolation space and we have 
\begin{align*}
	X_\gamma := \left(X_0, X_1\right)_{1-\frac{1}{p},p} \subseteq \left\{\left.(u,\varrho) \in W^{2-\frac{2}{p}}_p(\Gamma^*) \times W^{3-\frac{3}{p}}_p(\d \Gamma^*) \right| u|_{\d \Gamma^*} = \varrho\right\}.
\end{align*}
Moreover, calculating the mean curvature with respect to the used coordinates one observes that
\begin{align*}
	A_1(u,\varrho) (u,\varrho)	& := -\frac{H_\Gamma(u(t,q))}{n_\Gamma(u(t,q)) \cdot \d_w \Psi(q,u(t,q))}, \\
	A_2(u,\varrho) (u,\varrho) & := -\frac{a + b \kappa_{\d D}(\varrho(t,q)) + \skp{n_\Gamma(u(t,q))}{n_D(u(t,q))}}{n_{\d D}(\varrho(t,q)) \cdot \d_w \Psi(q,\varrho(t,q))}
\end{align*}
are quasilinear differential operators. More precisely, one can show that there are  
 $V := B_\epsilon(0) \subseteq X_\gamma$, $\epsilon>0$, and $A \in C^1(V, \mathcal{L}(X_1,X_0))$ such that
 \begin{equation*}
	A(v)v := \begin{pmatrix}
					 A_1(v)v \\
					 A_2(v)v
				 \end{pmatrix}.   
 \end{equation*}
 for all $v:= (u, \varrho)\in V$ by exactly the same arguments as in
 Lemmas 3.15 - 3.18 of \cite{Mue13}, see also
 \cite{AGMPreprint1}. Moreover, $-A'(0)$ is the operator defined by
 the right-hand side of \eqref{eq:LinearFlow1} with $\rho$ replaced by
 $u$ and without the integral-term as well as \eqref{eq:LinearFlow2}
 with $\rho$ replaced by $\varrho$.  The integral term arises as
 linearization of $F \in C^1(V,X_0)$ defined by $F(v)=
 (F_1(v),F_2(v))^T$ and
\begin{align*}
	F_1(u,\varrho) &:= -\frac{\bar{H}(u(t,q))}{n_\Gamma(u(t,q)) \cdot \d_w \Psi(q,u(t,q))}, \\
	F_2(u,\varrho) &:= 0
\end{align*}
for all $v=(u,\varrho)\in V$, i.e.,
\begin{equation*}
  F'(0)
  \begin{pmatrix}
    u,\varrho
  \end{pmatrix}
=
  \begin{pmatrix}
    - \mint_{\Gamma^*} (\Delta_{\Gamma^*} + |\sigma^*|^2) u \dH^2\\
    0
  \end{pmatrix}
\end{equation*}
for all $(u,\varrho)\in V$. Altogether $A_0:= A'(0)-F'(0)$ is the linearization of \eqref{eq:AbstractEvolutionEquation1} without the time derivative. It is the operator from (2.5) of \cite{PSZ09} adopted to our case $v^* \equiv 0$. Its spectral properties are crucial for the stability result below.
Finally, if we define  $v_0 := (\rho_0, \rho_0|_{\partial\Gamma^\ast})$,
(\ref{eq:AbstractEvolutionEquation1})-(\ref{eq:AbstractEvolutionEquation2}) is equivalent to \eqref{eq:Flow1}-\eqref{eq:Flow2} with $\Gamma_\rho(0)=\Gamma_{\rho_0}$.

We want to prove stability of SSCs, which means that we consider $v^* \equiv 0 \in \mathcal{E}$ parametrized over the SSC $\Gamma^*$, where $\mathcal{E}$ is the set of equilibria
\begin{align}\label{eq:Equilibria}
	\mathcal{E} := \left\{v \in V \cap X_1 \mid A(v) v = F(v)\right\} \subseteq V \cap X_1.
\end{align}
Clearly $\mathcal{E}$ is at least $2$-dimensional since we can shift any stationary surface in $x$- and $y$-direction without changing the curvatures, surface area and contact angle. That we consider $v^* \equiv 0$ also explains why our notation differs slightly from that of \cite{PSZ09}. In our special case there is no difference between what is called $v$ and $u$ in \cite{PSZ09}.

In Section \ref{sec:MaxReg} we will show in Theorem \ref{thm:Semigroup1} that $A'(0)$, which is $A_0$ without the non-local part $F'(0)$, has maximal $L_p$-regularity. This enables us to use Theorem 2.1 of \cite{PSZ09} which in our situation reads as follows.

\begin{thm}[GPLS]\label{thm:GPLStability}
Let $4 < p < \infty$ and suppose that $v^* \equiv 0$ is normally stable, i.e. \\
(a) near $v^*$ the set of equilibria $\mathcal{E}$ is a $C^1$-manifold in $X_1$, \\
(b) the tangent space of $\mathcal{E}$ at $v^*$ is given by $\mathcal{N}(A_0)$, \\
(c) $0$ is a semi-simple eigenvalue of $A_0$, i.e. $\mathcal{N}(A_0) \oplus \mathcal{R}(A_0) = X_0$, \\
(d) $\sigma(A_0) \setminus \{0\} \subseteq \C_+$. \\
Then $v^* \equiv 0$ is stable in $X_\gamma$ and there exists $\delta > 0$ such that the unique solution $v(t)$ of (\ref{eq:AbstractEvolutionEquation1})-(\ref{eq:AbstractEvolutionEquation2}) with initial value $v_0 \in X_\gamma$ satisfying $\norm{v_0}_{X_\gamma} < \delta$ exists on $\R^+$ and converges at an exponential rate in $X_\gamma$ to some $v_\infty \in \mathcal{E}$.
\end{thm}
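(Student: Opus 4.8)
The plan is to obtain the theorem as a direct application of Theorem~2.1 of \cite{PSZ09}: the statement above is precisely that abstract result specialized to the equilibrium $v^* \equiv 0$, so the only task is to confirm that the abstract framework of \cite{PSZ09} is in force in the present setting, the normal-stability hypotheses (a)--(d) being already part of the assumption.

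I would first collect the structural ingredients that were assembled above. We have the dense embedding $X_1 \hookrightarrow X_0$, the identification of the time-trace space $X_\gamma = (X_0,X_1)_{1-\frac1p,p}$ together with the fact that the trace condition $u|_{\d\Gamma^*} = \varrho$ carries over to it, the regularity $A \in C^1(V,\mathcal{L}(X_1,X_0))$ and $F \in C^1(V,X_0)$ (with $F'(0)$ producing the non-local average term), and finally $v^* = 0 \in V \cap X_1$, which lies in $\mathcal{E}$ because $\Gamma^*$ was chosen to be an SSC, so that $A(v^*)v^* = F(v^*)$. The single genuinely analytic hypothesis of \cite{PSZ09} still to be checked is that $A(v^*)$ has maximal $L_p$-regularity on $X_0$. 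Since $v^* = 0$, the Fr\'echet derivative at $0$ of the quasilinear map $v \mapsto A(v)v$ coincides with $A(0) = A(v^*)$, which is exactly the operator the excerpt calls $A'(0)$; its maximal $L_p$-regularity is the content of Theorem~\ref{thm:Semigroup1}, proved in Section~\ref{sec:MaxReg}. It is here that the non-locality of the volume-preserving flow, entering through $\bar H$, forces the semigroup perturbation argument of that section; note that $F'(0)$ is only a bounded, lower-order perturbation and therefore does not affect maximal regularity, so that $A_0 = A'(0) - F'(0)$ inherits it as well.

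Granting (a)--(d), i.e.\ that near $v^*$ the set $\mathcal{E}$ is a $C^1$-manifold in $X_1$ whose tangent space at $v^*$ equals $\mathcal{N}(A_0)$, that $0$ is a semi-simple eigenvalue of $A_0$, and that $\sigma(A_0)\setminus\{0\} \subseteq \C_+$, Theorem~2.1 of \cite{PSZ09} applies verbatim and yields: $v^*$ is stable in $X_\gamma$, and there is $\delta > 0$ such that for every $v_0 \in X_\gamma$ with $\norm{v_0}_{X_\gamma} < \delta$ the solution $v$ of \eqref{eq:AbstractEvolutionEquation1}--\eqref{eq:AbstractEvolutionEquation2} exists globally and converges in $X_\gamma$ to some $v_\infty \in \mathcal{E}$ at an exponential rate. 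Since, via $v_0 = (\rho_0, \rho_0|_{\d\Gamma^*})$, solutions of the abstract equation correspond bijectively to solutions of the geometric flow \eqref{eq:Flow1}--\eqref{eq:Flow2} with $\Gamma_\rho(0) = \Gamma_{\rho_0}$, this is the asserted stability statement.

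No real obstacle is expected inside this argument, which is an application of a quoted abstract theorem; the one point requiring care is the bookkeeping, namely matching our notation to that of \cite{PSZ09} --- in particular checking that the operator denoted $A'(0)$ here is literally $A(v^*)$ there, so that Theorem~\ref{thm:Semigroup1} furnishes exactly the maximal-regularity hypothesis demanded, and that the $A_0$ appearing in (a)--(d) is the full linearization $A'(0)-F'(0)$ including the non-local term. The substantial work is deferred to two later sections: the verification of (a)--(d) in Section~\ref{sec:Application}, which rests on a spectral analysis of the surface Laplacian on the spherical cap under the dynamic boundary conditions \eqref{eq:LinearFlow2} and on identifying the three-dimensional manifold of equilibria, and the proof of maximal $L_p$-regularity of $A'(0)$ in Section~\ref{sec:MaxReg}.
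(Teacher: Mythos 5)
Your proposal is correct and follows the same route as the paper: Theorem~\ref{thm:GPLStability} is nothing but Theorem~2.1 of \cite{PSZ09} specialized to the present quasilinear setup with $v^*\equiv 0$, and the only ingredient not already built into the hypotheses (a)--(d) is the maximal $L_p$-regularity of $A(v^*)=A'(0)$, supplied by Theorem~\ref{thm:Semigroup1}. Your bookkeeping matching the paper's notation to that of \cite{PSZ09} (in particular that $A'(0)=A(0)=A(v^*)$ because $v^*=0$, and that $A_0=A'(0)-F'(0)$ is the full linearization appearing in (2.5) of \cite{PSZ09}) is exactly the point the paper implicitly relies on when it states the theorem without further proof.
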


\section{Maximal regularity}\label{sec:MaxReg}

In a first step we want to show that for fixed $T > 0$ the flow
\begin{align}\label{eq:LocalFlow1}
	\d_t \rho(t) & = \Delta_{\Gamma^*} \rho(t) + |\sigma^*|^2 \rho(t){+f(t)} \hspace*{52.5mm} \text{ in } [0,T] \times \Gamma^*, \\ \label{eq:LocalFlow2}
	\d_t \rho(t) & = -\sin(\alpha^*)^2 (n_{\d \Gamma^*} \cdot \nabla_{\Gamma^*} \rho(t)) + \sin(\alpha^*) \cos(\alpha^*) II_{\Gamma^*}(n_{\d \Gamma^*},n_{\d \Gamma^*}) \rho(t) \notag \\
					 & + b \sin(\alpha^*) \rho_{\sigma\sigma}(t) - b \sin(\alpha^*) \kappa_{\d D^*} \skp{\vec{\tau}^*}{(n_{\d D^*})_\sigma} \rho(t) {+g(t)} \qquad \text{ on } [0,T] \times \d \Gamma^*, \\ \label{eq:LocalFlow3}
	\rho(0) 		 & = \rho_0 \hspace*{78.5mm} \text{ in } \Gamma^*,
\end{align}
which is (\ref{eq:LinearFlow1})-(\ref{eq:LinearFlow2}) without the non-local part and an additional initial condition, has a unique solution. 

\begin{rem}\label{rem:PerturbationOperator}
In our first step we will not consider the non-local term of (\ref{eq:LinearFlow1}), which is given by the operator
\begin{align*}
	\mathcal{P}(\bullet) & := \mint_{\Gamma^*} (\Delta_{\Gamma^*} + |\sigma^*|^2) \bullet \dH^2.
\end{align*}
Later we will show that $\mathcal{P}$ is only a lower order perturbation of the original differential operator and does not effect the result.
\end{rem}


Now we want to move on to the more important considerations about the non-local part, which we ignored in (\ref{eq:LocalFlow1})-(\ref{eq:LocalFlow3}), but has to be included for the flow (\ref{eq:LinearFlow1})-(\ref{eq:LinearFlow2}). The basic ingredient will be a perturbation result of semigroup theory and the time-independence of the operators $\mathcal{A}$, $\mathcal{B}_0$, $\mathcal{B}_1$, $\mathcal{C}_0$ and $\mathcal{C}_1$.

We define a linear operator associated to \eqref{eq:LinearFlow1}-\eqref{eq:LinearFlow2} as
\begin{alignat*}{1}
	A&: \mathcal{D}(A) \longrightarrow W\,,\\
        A
        \begin{pmatrix}
          \rho \\
          \tilde{\rho}
        \end{pmatrix}& 
= 
	\begin{pmatrix}
		-\Delta_{\Gamma^*} \rho - |\sigma^*|^2 \rho \\[1ex]
		\sin(\alpha^*)^2 (n_{\d \Gamma^*} \cdot \nabla_{\Gamma^*} \tilde{\rho}) - \sin(\alpha^*) \cos(\alpha^*) II_{\Gamma^*}(n_{\d \Gamma^*},n_{\d \Gamma^*}) \tilde\rho \notag \\
					  - b \sin(\alpha^*) \tilde\rho_{\sigma\sigma}(t) + b \sin(\alpha^*) \kappa_{\d D^*} \skp{\vec{\tau}^*}{(n_{\d D^*})_\sigma} \tilde\rho(t)
	\end{pmatrix} 
\end{alignat*}
for all $(\rho,\tilde\rho)^T\in \mathcal{D}(A)$ with domain 
\begin{align*}
	\mathcal{D}(A) & := \left\{\left.(\rho,\tilde{\rho})^T \in W^2_p(\Gamma^*;\R) \times W^{3-\frac{1}{p}}_p(\d \Gamma^*;\R)
									 \right| \rho|_{\d \Gamma^*} = \tilde{\rho}\right\}
\end{align*}
equipped with the $W^2_p \times W^{3-\frac{1}{p}}_p$-norm and the codomain is
\begin{align*}
	W & = L_p(\Gamma^*;\R) \times W^{1-\frac{1}{p}}_p(\d \Gamma^*;\R).
\end{align*}
Hence $A=A'(0)$.
{\begin{rem}\label{rem:EquivalenceOfNorms}
    We note that the norm on $\mathcal{D}(A)$ is equivalent to the graph norm, which can be seen as follows: By Theorem~\ref{thm:Semigroup1} below $-A$ generates an analytic semigroup. Therefore there is some $\lambda>0$ such that $\lambda +A\colon \mathcal{D}(A)\to W$ is invertible. This implies that there is some $C>0$ such that $\|u\|_{W^2_p\times W^{3-\frac1p}_p}\leq C\left(\|u\|_{W}+\|Au\|_{W}\right)$ for all $u\in \mathcal{D}(A)$. Hence the graph norm is stronger than the $W^2_p\times W^{3-\frac1p}_p$-norm on $\mathcal{D}(A)$. By the open mapping theorem both norms are equivalent.
  \end{rem}}


For this operator $A$ we get the following statement from \cite{DPZ08}.

\begin{thm}\label{thm:Semigroup1}
Let $3 < p < \infty$. Then the operator $-A$ generates an analytic semigroup in $W$, which has the property of maximal $L_p$-regularity on each finite interval $J = [0,T]$. Moreover, there is some $\omega \geq 0$ such that $-(A + \omega \id)$ has maximal $L_p$-regularity on the half-line $\R_+$.
\end{thm}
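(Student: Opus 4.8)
The plan is to verify the hypotheses of the Denk--Pr\"uss--Zacher theory for parameter-elliptic boundary value problems on manifolds with boundary, as established in \cite{DPZ08}. The operator $A$ is, in the interior, the negative surface Laplacian $-\Delta_{\Gamma^*}$ plus a zeroth order term $-|\sigma^*|^2$, which is a second order elliptic operator with smooth coefficients on the compact manifold-with-boundary $\Gamma^*$. The boundary operator, read off from \eqref{eq:LinearFlow2}, is $\mathcal{B}\tilde\rho = \sin(\alpha^*)^2(n_{\d\Gamma^*}\cdot\nabla_{\Gamma^*}\tilde\rho) - b\sin(\alpha^*)\tilde\rho_{\sigma\sigma} + (\text{lower order})$; its top-order part in the tangential variable $\sigma$ along $\d\Gamma^*$ is $-b\sin(\alpha^*)\d_\sigma^2$, a second order operator along the one-dimensional boundary curve. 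Thus the pair $(A,\mathcal{B})$ is a boundary value problem of the type covered by \cite{DPZ08} provided one checks (i) parameter-ellipticity of $A$ (trivial, since its principal symbol is $|\xi|^2$), (ii) the Lopatinskii--Shapiro (complementing) condition for the boundary symbol, and (iii) that the functional-analytic setting — the spaces $W^2_p(\Gamma^*)\times W^{3-\frac1p}_p(\d\Gamma^*)$, $L_p(\Gamma^*)\times W^{1-\frac1p}_p(\d\Gamma^*)$ and the constraint $\rho|_{\d\Gamma^*}=\tilde\rho$ — matches the one in that reference (which treats dynamic/Wentzell-type boundary conditions of precisely this second-order-in-the-tangential-direction form, whence the $W^{3-\frac1p}_p$ regularity demanded of $\tilde\rho$ and the weight $\sin(\alpha^*)$, nonzero by \eqref{eq:AngleAssumption}). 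Once these are in place, the cited theorem yields $\mathcal{R}$-sectoriality of $-A$ with angle $<\pi/2$ after a shift, hence generation of an analytic semigroup and maximal $L_p$-regularity on finite intervals.

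First I would localize: cover $\Gamma^*$ by finitely many charts, distinguishing interior charts (where the problem reduces, after freezing coefficients, to $\d_t u - \Delta u = f$ on a half-space or full space, which has maximal $L_p$-regularity by standard results) from boundary charts around points of $\d\Gamma^*$. In a boundary chart I would flatten $\d\Gamma^*$ and write the principal boundary symbol: for the half-plane model $\{(x,\sigma): x>0\}$ with boundary $\{x=0\}$, the interior symbol is $\lambda + |\xi'|^2 + |\xi_\sigma|^2$ and the boundary condition becomes, to leading order, $\lambda\hat\rho + \sin(\alpha^*)^2\d_x\hat\rho|_{x=0} + b\sin(\alpha^*)\xi_\sigma^2\hat\rho|_{x=0} = \hat g$ together with the Dirichlet-type matching $\hat\rho|_{x=0}=\hat{\tilde\rho}$. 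One then solves the ODE $\lambda v - v'' + \xi_\sigma^2 v = 0$, $v$ bounded, i.e. $v(x)=c\,e^{-\mu x}$ with $\mu=\sqrt{\lambda+\xi_\sigma^2}$, $\operatorname{Re}\mu>0$, and checks that the boundary functional applied to $e^{-\mu x}$, namely $\lambda - \sin(\alpha^*)^2\mu + b\sin(\alpha^*)\xi_\sigma^2$, is nonzero for $\lambda$ in a sector around $\R_+$ and $(\xi_\sigma,\lambda)\neq 0$ — this is the Lopatinskii--Shapiro condition, and here it holds because $\sin(\alpha^*)>0$ makes the real parts line up with the correct sign (one needs the combination $\lambda + b\sin(\alpha^*)\xi_\sigma^2$, whose real part is positive, to dominate $-\sin(\alpha^*)^2\mu$ appropriately; a short computation with $\operatorname{Re}\mu\geq|\xi_\sigma|$ closes it). The second component of the system (the Dirichlet coupling) is itself a standard elliptic boundary condition, and the two together form a triangular system whose complementing condition follows from that of each piece.

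Having checked parameter-ellipticity and the Lopatinskii--Shapiro condition uniformly in the localization, I would invoke \cite{DPZ08} to conclude that $\lambda + A$ is $\mathcal{R}$-sectorial of angle $<\pi/2$ on $W$ for $\operatorname{Re}\lambda$ large, and therefore that $-A$ generates an analytic semigroup on $W$ enjoying maximal $L_p$-regularity on every finite interval $[0,T]$, with the domain $\mathcal{D}(A)$ being exactly the anisotropic space in the statement (this identification of the domain is part of the elliptic regularity delivered by the cited theorem). The final clause — existence of $\omega\geq 0$ with $-(A+\omega\,\mathrm{id})$ having maximal $L_p$-regularity on $\R_+$ — follows because $\mathcal{R}$-sectoriality with angle $<\pi/2$ after the shift by $\omega$ gives $0\in\rho(-(A+\omega))$ together with the exponentially weighted estimates, which is the half-line version of maximal regularity; concretely one picks $\omega$ larger than the spectral bound coming from the sectoriality estimate.

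The main obstacle, and the only step requiring genuine care rather than bookkeeping, is the verification of the Lopatinskii--Shapiro condition for the coupled system: the boundary operator is of \emph{higher} order in the tangential direction than the interior operator is in the normal direction (a Wentzell/dynamic condition), so the standard Agmon--Douglis--Nirenberg parameter counting must be done in the correct anisotropic scaling, and one must track the sign contributed by $\sin(\alpha^*)\in(0,1]$ and by $b>0$ to see that no spurious zero of the boundary symbol appears in the closed sector. Everything else — the localization, the interior estimates, the passage from $\mathcal{R}$-sectoriality to maximal regularity, and the domain identification — is routine once the symbol computation is in hand, and in fact the bulk of it is exactly what is carried out in \cite{DPZ08}, so the proof is essentially a citation once the ellipticity and complementing conditions for our specific $(A,\mathcal{B})$ are confirmed.
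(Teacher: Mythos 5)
Your overall strategy coincides with the paper's: both simply invoke Theorem~2.2 of \cite{DPZ08} for parabolic boundary value problems with dynamic boundary conditions, the paper delegating the verification of its hypotheses to \cite{AGMPreprint1}. You attempt to spell out that verification, which is the right content to supply here; the split into interior parameter-ellipticity plus a Lopatinskii--Shapiro check for the coupled Wentzell-type boundary operator is the correct skeleton.

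However, your boundary-symbol computation has a sign error that, read literally, makes the Lopatinskii--Shapiro condition \emph{fail}. The paper defines $n_{\d\Gamma^*}$ as the \emph{outer} conormal, so in the half-plane model $\{x>0\}$ one has $n_{\d\Gamma^*}\cdot\nabla_{\Gamma^*} = -\d_x$, not $+\d_x$. The boundary symbol applied to the decaying solution $e^{-\mu x}$, with $\mu=\sqrt{\lambda+\xi_\sigma^2}$ and $\operatorname{Re}\mu>0$, of the resolvent ODE is therefore
$\lambda + \sin(\alpha^*)^2\mu + b\sin(\alpha^*)\xi_\sigma^2$,
not $\lambda - \sin(\alpha^*)^2\mu + b\sin(\alpha^*)\xi_\sigma^2$ as you wrote. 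This sign is not cosmetic: with your version, taking $\xi_\sigma=0$ gives $\lambda - \sin(\alpha^*)^2\sqrt{\lambda}$, which vanishes at $\lambda=\sin(\alpha^*)^4>0$ on the positive real axis, squarely inside the parabolic sector --- so the complementing condition would be violated, and the ``dominance'' heuristic you sketch cannot close it. With the correct sign, vanishing would force $\operatorname{Re}\mu<0$ (at $\xi_\sigma=0$ it forces $\sqrt{\lambda}=-\sin(\alpha^*)^2$, contradicting $\operatorname{Re}\sqrt{\lambda}\geq 0$), which is impossible, so the Lopatinskii--Shapiro condition does hold. Once this sign is corrected, the rest of your outline --- localization, interior symbol $|\xi|^2$, passage from $\mathcal{R}$-sectoriality to maximal $L_p$-regularity, and the identification of $\mathcal{D}(A)$ --- is exactly what the cited theorem delivers, and matches the paper's citation-level argument.
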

\begin{proof}
{This result follows from Theorem 2.2 of \cite{DPZ08} applied to the given situation. We refer to \cite{AGMPreprint1} for more details on the application of this result.}
\end{proof}

Now we use a perturbation argument for generators of analytic semigroups taken from \cite{Paz83} to treat the non-local part $\mathcal{P}$. This is the essential ingredient needed to proof the existence of solutions for the flow (\ref{eq:LinearFlow1})-(\ref{eq:LinearFlow2}).

\begin{lemma}\label{lem:SemigroupPerturbation}
Let {$-A$} be the generator of an analytic semigroup on $X$. Let $P$ be a closed linear operator satisfying $\mathcal{D}(P) \supseteq \mathcal{D}(A)$ and
\begin{align}\label{eq:PerturbationProp}
	\norm{Px}_X \leq \epsilon \norm{Ax}_X + M \norm{x}_X \qquad \forall \ x \in \mathcal{D}(A).
\end{align}
Then there is some $\epsilon_0 > 0$ such that, if $0 \leq \epsilon \leq \epsilon_0$, then {$-A + P$} is the generator of an analytic semigroup.
\end{lemma}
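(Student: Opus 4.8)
The plan is to reduce this to the classical perturbation theorem for generators of analytic semigroups (the version in Pazy \cite{Paz83}, Chapter 3), which asserts precisely that if $-A$ generates an analytic semigroup and $P$ is $A$-bounded with sufficiently small relative bound, then $-A+P$ again generates an analytic semigroup. So the heart of the proof is not a new construction but a verification that our hypotheses match those of the cited theorem, together with bookkeeping on the resolvent estimate.

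First I would recall the quantitative characterization of generators of bounded analytic semigroups: $-A$ generates an analytic semigroup iff there are $\omega \in \R$, $\vartheta \in (\pi/2,\pi)$ and $C>0$ such that the sector $\omega + \Sigma_\vartheta$ lies in the resolvent set of $-A$ and $\norm{(\lambda + A - \omega)^{-1}}_{\mathcal{L}(X)} \le C/|\lambda|$ there. After a shift we may assume $\omega = 0$, so $\norm{(\lambda+A)^{-1}} \le C/|\lambda|$ on a sector $\Sigma_\vartheta$. Next I would estimate, for $\lambda \in \Sigma_\vartheta$ and $y \in X$, the quantity $P(\lambda+A)^{-1}y$ using the relative bound hypothesis \eqref{eq:PerturbationProp}: writing $x := (\lambda+A)^{-1}y \in \mathcal{D}(A)$ and $Ax = y - \lambda x$, one gets
\begin{align*}
	\norm{P(\lambda+A)^{-1}y}_X &\le \epsilon \norm{Ax}_X + M\norm{x}_X \le \epsilon \norm{y}_X + (\epsilon|\lambda| + M)\norm{x}_X \\
	&\le \epsilon\norm{y}_X + (\epsilon|\lambda|+M)\,\frac{C}{|\lambda|}\,\norm{y}_X = \left(\epsilon(1+C) + \frac{MC}{|\lambda|}\right)\norm{y}_X.
\end{align*}
Hence $\norm{P(\lambda+A)^{-1}}_{\mathcal{L}(X)} \le \epsilon(1+C) + MC/|\lambda|$. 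Choosing $\epsilon_0$ with $\epsilon_0(1+C) \le 1/4$ and then restricting to $|\lambda| \ge R$ for $R$ large enough that $MC/R \le 1/4$, we obtain $\norm{P(\lambda+A)^{-1}} \le 1/2 < 1$ on the truncated sector $\{\lambda \in \Sigma_\vartheta : |\lambda| \ge R\}$.

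Then I would invoke the Neumann series: on that truncated sector $I + P(\lambda+A)^{-1}$ is invertible with inverse bounded by $2$, and the identity
\begin{align*}
	\lambda + A - P = \bigl(I - P(\lambda+A)^{-1}\bigr)(\lambda+A)
\end{align*}
(valid since $\mathcal{D}(P) \supseteq \mathcal{D}(A)$, so $P(\lambda+A)^{-1}$ is everywhere defined and bounded — here closedness of $P$ together with the closed graph theorem gives boundedness of $P$ from $\mathcal{D}(A)$ with the graph norm into $X$, hence $P(\lambda+A)^{-1} \in \mathcal{L}(X)$) shows $\lambda$ is in the resolvent set of $-(A-P) = -A+P$ and
\begin{align*}
	\norm{(\lambda + A - P)^{-1}}_{\mathcal{L}(X)} \le \norm{(\lambda+A)^{-1}}\,\norm{\bigl(I - P(\lambda+A)^{-1}\bigr)^{-1}} \le \frac{2C}{|\lambda|}.
\end{align*}
This is the required sectorial resolvent estimate on $\{\lambda \in \Sigma_\vartheta : |\lambda| \ge R\}$; after a further shift by $R$ (or enlarging $\omega$) one gets it on a full sector, so $-A+P$ generates an analytic semigroup. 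I should also note that $A - P$ is closed with domain $\mathcal{D}(A)$, which follows because $P$ is $A$-bounded with relative bound $<1$.

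I do not expect a serious obstacle here — this is essentially a textbook argument, and the main point of including it is to fix the dependence of $\epsilon_0$ on the sectoriality constants of $-A$ so that it can later be applied with $P = \mathcal{P}$, the nonlocal averaging operator of Remark \ref{rem:PerturbationOperator}. The only mild subtlety worth stating carefully is why $P(\lambda+A)^{-1}$ is a bounded operator on all of $X$: one uses that $P \colon (\mathcal{D}(A), \norm{\cdot}_{\text{graph}}) \to X$ is bounded (closedness of $P$ plus $\mathcal{D}(P)\supseteq\mathcal{D}(A)$ and the closed graph theorem, noting the graph norm of $A$ is equivalent to the $W^2_p \times W^{3-\frac1p}_p$-norm by Remark \ref{rem:EquivalenceOfNorms}) and that $(\lambda+A)^{-1} \colon X \to (\mathcal{D}(A), \norm{\cdot}_{\text{graph}})$ is bounded for $\lambda$ in the resolvent set. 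Everything else is the resolvent bookkeeping above.
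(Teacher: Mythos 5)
Your proof is correct and is precisely the standard resolvent--perturbation argument found in Pazy \cite{Paz83}, which the paper appeals to by citation without reproducing the details. Two cosmetic notes: the sign in ``$I + P(\lambda+A)^{-1}$ is invertible'' should be a minus sign, matching the factorization $\lambda+A-P = (I-P(\lambda+A)^{-1})(\lambda+A)$ that you then use (harmless, since $\|P(\lambda+A)^{-1}\|<1$ makes both invertible), and the parenthetical appeal to the closed graph theorem and to Remark~\ref{rem:EquivalenceOfNorms} is not needed here --- that remark concerns the concrete operator of Section~4, whereas in this abstract lemma the hypothesis \eqref{eq:PerturbationProp} already gives boundedness of $P$ from $(\mathcal{D}(A),\norm{\cdot}_{\mathrm{graph}})$ into $X$ directly.
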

\begin{proof}
Can be found in \cite{Paz83} on page 80.
\end{proof}

In our case the perturbation operator $P$ reads as follows
\begin{align*}
	P: W^2_p(\Gamma^*;\R) \times W^{3-\frac{1}{p}}_p(\d \Gamma^*;\R) \longrightarrow \R \times \{0\}: \begin{pmatrix}
																																		  \rho \\
																																		  \tilde{\rho}
																																	  \end{pmatrix}
	\longmapsto \begin{pmatrix}
						P_1 & \mathbb{O} \\
						\mathbb{O} & \mathbb{O}
					\end{pmatrix} \begin{pmatrix}
										  \rho \\
										  \tilde{\rho}
									  \end{pmatrix},
\end{align*}
where the operator $P_1$ is defined as
\begin{align*}
	P_1(\rho) := -\mint_{\Gamma^*} (\Delta_{\Gamma^*} + |\sigma^*|^2) \rho \dH^2.
\end{align*}
Due to the fact that $\Gamma^*$ is bounded we can embed the space $\R$ into $L_p(\Gamma^*;\R)$. Therefore, we can consider $P$ as an operator
\begin{align*}
	P: \mathcal{D}(P) \longrightarrow L_p(\Gamma^*;\R) \times W^{1-\frac{1}{p}}_p(\d \Gamma^*;\R)
\end{align*}
with $\mathcal{D}(P) := W^2_p(\Gamma^*;\R) \times W^{3-\frac{1}{p}}_p(\d \Gamma^*;\R) \supseteq \mathcal{D}(A)$ as required in Lemma \ref{lem:SemigroupPerturbation}. The argument $\R \hookrightarrow L_p(\Gamma^*;\R)$ also shows that $P$ is a closed linear operator. Now our goal is to prove that equation (\ref{eq:PerturbationProp}) is valid with arbitrarily small $\epsilon$. Hence, we would see $-A + P$ is also a generator of an analytic semigroup. The necessary steps to achieve this aim will be distributed to several lemmas. For a more convenient notation we define the spaces $V$ and $W$ to be
\begin{align*}
	V & := W^2_p(\Gamma^*;\R) \times W^{3-\frac{1}{p}}_p(\d \Gamma^*;\R).
\end{align*}

\begin{lemma}\label{lem:NormPEstimate1}
For all $x \in \mathcal{D}(A)$ one has the estimate
\begin{align}\label{eq:NormPEstimate1}
	\norm{Px}_W \leq c \norm{x}_V^\vartheta \norm{x}_W^{1-\vartheta}
\end{align}
for some $\vartheta \in (0,1)$.
\end{lemma}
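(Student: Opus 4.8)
The plan is to use that the perturbation $P$ acts only through its first component $P_1$, that $P_1\rho$ is a \emph{constant} function on $\Gamma^*$, and — the decisive trick — to integrate by parts so that only \emph{one} derivative of $\rho$ has to be estimated; a single interpolation inequality between $W$ and $V$ then closes the argument. For the reduction: since $P(\rho,\tilde\rho)^T=(P_1\rho,0)^T$ and $\|(\varphi,\psi)\|_W=\|\varphi\|_{L_p(\Gamma^*)}+\|\psi\|_{W^{1-\frac1p}_p(\d\Gamma^*)}$, it suffices to estimate $\|P_1\rho\|_{L_p(\Gamma^*)}$. Because $|\sigma^*|^2$ is constant on the SSC $\Gamma^*$, the function $P_1\rho=-\mint_{\Gamma^*}(\Delta_{\Gamma^*}+|\sigma^*|^2)\rho\dH^2$ is a constant, so $\|P_1\rho\|_{L_p(\Gamma^*)}=\mathcal{H}^2(\Gamma^*)^{1/p}\,|P_1\rho|$ and the whole lemma reduces to bounding the single number $|P_1\rho|$.

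The decisive step is integration by parts. For $\rho\in W^2_p(\Gamma^*)$ with $p>4>2$ one has $\rho\in C^1(\Gamma^*)$, so the divergence theorem on the compact surface-with-boundary $\Gamma^*$ applies (first for $C^2$-functions, then by density) and gives $\int_{\Gamma^*}\Delta_{\Gamma^*}\rho\dH^2=\int_{\d\Gamma^*}(n_{\d\Gamma^*}\cdot\nabla_{\Gamma^*}\rho)\dH^1$. Hence
\[
	|P_1\rho|\le\frac{1}{\mathcal{H}^2(\Gamma^*)}\bigl(\|n_{\d\Gamma^*}\cdot\nabla_{\Gamma^*}\rho\|_{L_1(\d\Gamma^*)}+|\sigma^*|^2\,\|\rho\|_{L_1(\Gamma^*)}\bigr).
\]
Now the leading term costs only one derivative of $\rho$, restricted to $\d\Gamma^*$: by H\"older's inequality and the trace theorem on $\Gamma^*$ (available on surfaces, exactly as already used in Section~\ref{sec:GPLS}), for any fixed $s\in(\tfrac1p,1)$,
\[
	\|n_{\d\Gamma^*}\cdot\nabla_{\Gamma^*}\rho\|_{L_1(\d\Gamma^*)}\le c\,\|\nabla_{\Gamma^*}\rho\|_{L_p(\d\Gamma^*)}\le c\,\|\nabla_{\Gamma^*}\rho\|_{W^s_p(\Gamma^*)}\le c\,\|\rho\|_{W^{1+s}_p(\Gamma^*)},
\]
where such an $s$ exists since $p>1$, and then $1+s\in(1+\tfrac1p,2)$.

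To conclude, I interpolate. Setting $\vartheta:=\tfrac{1+s}{2}\in(0,1)$, Theorem 4.3.1/1 of \cite{Tri78} (valid on surfaces) gives $\bigl(L_p(\Gamma^*),W^2_p(\Gamma^*)\bigr)_{\vartheta,p}=W^{1+s}_p(\Gamma^*)$, and the convexity of the real-interpolation norm yields $\|\rho\|_{W^{1+s}_p(\Gamma^*)}\le c\,\|\rho\|_{W^2_p(\Gamma^*)}^{\vartheta}\|\rho\|_{L_p(\Gamma^*)}^{1-\vartheta}$. The remaining term satisfies $\|\rho\|_{L_1(\Gamma^*)}\le c\,\|\rho\|_{L_p(\Gamma^*)}\le c\,\|\rho\|_{W^2_p(\Gamma^*)}^{\vartheta}\|\rho\|_{L_p(\Gamma^*)}^{1-\vartheta}$, using $\|\rho\|_{L_p(\Gamma^*)}\le c\,\|\rho\|_{W^2_p(\Gamma^*)}$ on the bounded $\Gamma^*$. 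Combining the displays and estimating $\|\rho\|_{W^2_p(\Gamma^*)}\le\|x\|_V$, $\|\rho\|_{L_p(\Gamma^*)}\le\|x\|_W$ gives $\|Px\|_W\le c\,\|x\|_V^{\vartheta}\|x\|_W^{1-\vartheta}$, which is \eqref{eq:NormPEstimate1}.

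The only genuine obstacle is the second step: one must spend a derivative via integration by parts \emph{before} estimating, which is what allows the required Sobolev order $1+s$ to be pushed strictly below $2$ and hence produces an exponent $\vartheta<1$; estimating $\|\Delta_{\Gamma^*}\rho\|_{L_1(\Gamma^*)}$ head-on would only give the useless bound with $\vartheta=1$. Everything else is H\"older's inequality, the trace theorem on $\Gamma^*$, and real interpolation.
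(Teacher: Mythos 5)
Your proof is correct and is essentially the same argument as the paper's. Both reduce to estimating the scalar $|P_1\rho|$, both use Gauss' theorem on $\Gamma^*$ to spend one derivative so that only $\|\rho\|_{W^{1+s}_p(\Gamma^*)}$ with $1+s<2$ appears, and both then use the trace theorem together with real interpolation between $L_p(\Gamma^*)$ and $W^2_p(\Gamma^*)$ to obtain the factorization $\|x\|_V^{\vartheta}\|x\|_W^{1-\vartheta}$; your $s\in(\tfrac1p,1)$ is exactly the paper's $\tfrac1p+\epsilon$, and citing Theorem~4.3.1/1 of \cite{Tri78} rather than Example~2.12 of \cite{Lun09} for the interpolation identity is an inconsequential substitution since the paper itself invokes the former elsewhere.
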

\begin{proof}
First we see
\begin{align*}
	\norm{P_1\rho}_{L_p(\Gamma^*)} = \left(\int_{\Gamma^*} \left|P_1\rho\right|^p \dH^2\right)^{\frac{1}{p}} = A(\Gamma^*)^{\frac{1}{p} - 1} \left|\int_{\Gamma^*} \Delta_{\Gamma^*} \rho + |\sigma^*|^2 \rho \dH^2\right|.
\end{align*}
Due to the compactness of $\Gamma^* \cup \d \Gamma^*$ and the smoothness of $\Gamma^*$ up to the boundary we have $|\sigma^*|^2 \leq c$. Hence we continue with the estimate from above
\begin{align*}
	\norm{P_1\rho}_{L_p(\Gamma^*)}
		& \leq A(\Gamma^*)^{\frac{1}{p} - 1} \left(\left|\int_{\Gamma^*} \Delta_{\Gamma^*} \rho \dH^2\right| + c \int_{\Gamma^*} |\rho| \dH^2\right) \\
		& = A(\Gamma^*)^{\frac{1}{p} - 1} \left(\left|\int_{\d \Gamma^*} n_{\d \Gamma^*} \cdot \nabla_{\Gamma^*} \rho \dH^1\right| + c \norm{\rho}_{L_1(\Gamma^*)}\right) \\
		& \leq A(\Gamma^*)^{\frac{1}{p} - 1} \left(\hat{c} \norm{\nabla_{\Gamma^*} \rho}_{L_1(\d \Gamma^*)} + c \norm{\rho}_{L_1(\Gamma^*)}\right) \\
		& \leq \tilde{c} \left(\norm{\nabla_{\Gamma^*} \rho}_{L_p(\d \Gamma^*)} + \norm{\rho}_{L_p(\Gamma^*)}\right),
\end{align*}
where we used Gauss' theorem on manifolds in the second line. For every finite measure space $(\Omega,\mu)$ and every $\epsilon > 0$ one has $W^\epsilon_p(\Omega) \hookrightarrow L_p(\Omega)$ and thus we obtain
\begin{align*}
	\norm{P_1\rho}_{L_p(\Gamma^*)} \leq \tilde{c} \left(\norm{\nabla_{\Gamma^*} \rho}_{L_p(\d \Gamma^*)} + \norm{\rho}_{L_p(\Gamma^*)}\right) \leq \hat{c} \left(\norm{\nabla_{\Gamma^*} \rho}_{W^\epsilon_p(\d \Gamma^*)} + \norm{\rho}_{W^{1+\frac{1}{p}+\epsilon}_p(\Gamma^*)}\right).
\end{align*}
Furthermore, the trace operator $\gamma_0$ is linear and bounded from $W^s_p(\Omega)$ to $W^{s - \frac{1}{p}}_p(\d \Omega)$ for every $s > \frac{1}{p}$ and we have
\begin{align}\label{eq:P1Estimate}
	\norm{P_1\rho}_{L_p(\Gamma^*)}
		& \leq \hat{c} \left(\norm{\nabla_{\Gamma^*} \rho}_{W^\epsilon_p(\d \Gamma^*)} + \norm{\rho}_{W^{1+\frac{1}{p}+\epsilon}_p(\Gamma^*)}\right) \notag \\
		& \leq c' \left(\norm{\nabla_{\Gamma^*}
                    \rho}_{W^{\frac{1}{p}+\epsilon}_p(\Gamma^*)} +
                  \norm{\rho}_{W^{1+\frac{1}{p}+\epsilon}_p(\Gamma^*)}\right)
                \leq c
                \norm{\rho}_{W^{1+\frac{1}{p}+\epsilon}_p(\Gamma^*)} \,.
\end{align}
Using that $\Gamma^\ast$ is diffeomorphic to a bounded smooth domain
the Example 2.12 from \cite{Lun09} shows that $W^{1 + \frac{1}{p} +
  \epsilon}_p(\Gamma^*)$ is an interpolation space of exponent
$\vartheta = \frac{1}{2} (1 + \frac{1}{p} + \epsilon) \in (0,1)$ with
respect to $\left(L_p(\Gamma^*),W^2_p(\Gamma^*)\right)$, where we
assume w.l.o.g. $\epsilon < 1 - \frac{1}{p}$. This leads to
\begin{align*}
	\norm{Px}_W & \leq \tilde{c} \norm{\rho}_{W^{1+\frac{1}{p}+\epsilon}_p(\Gamma^*)} \leq c \norm{\rho}_{W^2_p(\Gamma^*)}^\vartheta \norm{\rho}_{L_p(\Gamma^*)}^{1-\vartheta} \\
					& \leq c \left(\norm{\rho}_{W^2_p(\Gamma^*)} + \norm{\tilde{\rho}}_{W^{3-\frac{1}{p}}_p(\d \Gamma^*)}\right)^\vartheta \left(\norm{\rho}_{L_p(\Gamma^*)} + \norm{\tilde{\rho}}_{W^{1-\frac{1}{p}}_p(\d \Gamma^*)}\right)^{1-\vartheta} = c \norm{x}_V^\vartheta \norm{x}_W^{1-\vartheta}
\end{align*}
and shows the desired result.
\end{proof}

\begin{lemma}\label{lem:NormPEstimate2}
For all $x \in \mathcal{D}(A)$ one has the estimate
\begin{align}\label{eq:NormPEstimate2}
	\norm{Px}_W \leq c \left(\norm{Ax}_W^\vartheta \norm{x}_W^{1-\vartheta} + \norm{x}_W\right)
\end{align}
for some $\vartheta \in (0,1)$.
\end{lemma}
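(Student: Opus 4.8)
The plan is to bootstrap the multiplicative estimate of Lemma~\ref{lem:NormPEstimate1} using that, on $\mathcal{D}(A)$, the $V$-norm is dominated by the graph norm of $A$. Lemma~\ref{lem:NormPEstimate1} already provides some $\vartheta\in(0,1)$ and a constant $c>0$ with $\norm{Px}_W\le c\,\norm{x}_V^{\vartheta}\,\norm{x}_W^{1-\vartheta}$ for all $x\in\mathcal{D}(A)$, so the only task left is to express $\norm{x}_V$ through $\norm{Ax}_W$ and $\norm{x}_W$.

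That control is precisely the content of Remark~\ref{rem:EquivalenceOfNorms}: by Theorem~\ref{thm:Semigroup1} the operator $-A$ generates an analytic semigroup, hence $\lambda+A\colon\mathcal{D}(A)\to W$ is boundedly invertible for some $\lambda>0$, and the open mapping theorem yields a constant $C>0$ with $\norm{x}_V\le C\bigl(\norm{Ax}_W+\norm{x}_W\bigr)$ for every $x\in\mathcal{D}(A)$. Inserting this into the previous bound gives $\norm{Px}_W\le c\,C^{\vartheta}\bigl(\norm{Ax}_W+\norm{x}_W\bigr)^{\vartheta}\norm{x}_W^{1-\vartheta}$.

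Finally I would invoke the elementary inequality $(s+t)^{\vartheta}\le s^{\vartheta}+t^{\vartheta}$, valid for $s,t\ge 0$ and $\vartheta\in(0,1)$, with $s=\norm{Ax}_W$ and $t=\norm{x}_W$, to conclude
\[
\norm{Px}_W\le c'\bigl(\norm{Ax}_W^{\vartheta}+\norm{x}_W^{\vartheta}\bigr)\norm{x}_W^{1-\vartheta}
= c'\bigl(\norm{Ax}_W^{\vartheta}\norm{x}_W^{1-\vartheta}+\norm{x}_W\bigr),
\]
which is exactly the asserted estimate \eqref{eq:NormPEstimate2}, with the very same $\vartheta$ as in Lemma~\ref{lem:NormPEstimate1}.

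I do not expect a genuine obstacle here: the argument is a short chain of already available facts, and the only substantial input — the equivalence of the $V$-norm and the graph norm on $\mathcal{D}(A)$ — has been settled in Remark~\ref{rem:EquivalenceOfNorms}. The one point to watch is purely notational, namely to carry the exponent $\vartheta$ from Lemma~\ref{lem:NormPEstimate1} unchanged through the computation so that it matches the exponent appearing in the statement.
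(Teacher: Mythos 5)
Your proposal is correct and follows essentially the same route as the paper's own proof: both combine Lemma~\ref{lem:NormPEstimate1} with the graph-norm bound from Remark~\ref{rem:EquivalenceOfNorms} and conclude via the elementary inequality $(s+t)^{\vartheta}\le s^{\vartheta}+t^{\vartheta}$.
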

\begin{proof}
{First of all, because of Remark~\ref{rem:EquivalenceOfNorms} we have
\begin{align*}
 	\norm{x}_V \leq  c \left(\norm{x}_W + \norm{Ax}_W\right).
 \end{align*}}
Using Lemma \ref{lem:NormPEstimate1}, we finally arrive at
\begin{align*}
	\norm{Px}_W & \leq c \norm{x}_V^\vartheta \norm{x}_W^{1-\vartheta} \leq c \tilde{c}^\vartheta \left(\norm{x}_W + \norm{Ax}_W\right)^\vartheta \norm{x}_W^{1-\vartheta} \\
					& \leq \hat{c} \left(\norm{x}_W^\vartheta + \norm{Ax}_W^\vartheta\right) \norm{x}_W^{1-\vartheta} = \hat{c} \norm{x}_W + \hat{c} \norm{Ax}_W^\vartheta \norm{x}_W^{1-\vartheta},
\end{align*}
where we used $(a + b)^\vartheta \leq (a^\vartheta + b^\vartheta)$.
\end{proof}

\begin{thm}\label{thm:Semigroup2}
Let $3 < p < \infty$. Then the operator $-A + P$ generates an analytic semigroup in $W$.
\end{thm}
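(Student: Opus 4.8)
The plan is to read Theorem~\ref{thm:Semigroup2} as a direct application of the Pazy-type perturbation result Lemma~\ref{lem:SemigroupPerturbation} with $X = W$, using that $-A$ already generates an analytic semigroup on $W$ by Theorem~\ref{thm:Semigroup1}. The structural hypotheses of Lemma~\ref{lem:SemigroupPerturbation} have essentially been checked already: $P$ is a closed linear operator (via the embedding $\R \hookrightarrow L_p(\Gamma^*)$, which holds because $\Gamma^*$ is bounded) with $\mathcal{D}(P) = V \supseteq \mathcal{D}(A)$. Hence the only remaining task is to verify the relative-boundedness estimate \eqref{eq:PerturbationProp} with an \emph{arbitrarily small} constant $\epsilon$.

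To produce that estimate, I would start from Lemma~\ref{lem:NormPEstimate2}, i.e. $\norm{Px}_W \leq c\bigl(\norm{Ax}_W^\vartheta \norm{x}_W^{1-\vartheta} + \norm{x}_W\bigr)$ for all $x \in \mathcal{D}(A)$ and some fixed $\vartheta \in (0,1)$, and apply Young's inequality in the form $st \leq \delta\, s^{1/\vartheta} + C(\delta,\vartheta)\, t^{1/(1-\vartheta)}$ with $s = \norm{Ax}_W^\vartheta$ and $t = \norm{x}_W^{1-\vartheta}$. This turns the interpolated term into $\norm{Ax}_W^\vartheta \norm{x}_W^{1-\vartheta} \leq \delta\, \norm{Ax}_W + C(\delta)\, \norm{x}_W$, so that
\[
	\norm{Px}_W \leq c\delta\, \norm{Ax}_W + \bigl(c\,C(\delta) + c\bigr) \norm{x}_W \qquad \text{for every } \delta > 0 .
\]
Given the threshold $\epsilon_0 > 0$ from Lemma~\ref{lem:SemigroupPerturbation}, I would then fix $\delta$ so small that $c\delta \leq \epsilon_0$ (possible since the prefactor $c$ from Lemma~\ref{lem:NormPEstimate2} does not depend on $\delta$), set $\epsilon := c\delta$ and $M := c\,C(\delta) + c$, and conclude that \eqref{eq:PerturbationProp} holds with $0 \le \epsilon \le \epsilon_0$. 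Lemma~\ref{lem:SemigroupPerturbation} then immediately gives that $-A + P$ generates an analytic semigroup on $W$, which is the assertion.

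I do not expect a genuine obstacle at this point: all the substantive work — the integration by parts on $\Gamma^*$, the trace and Sobolev embeddings, and the interpolation inequality producing the subcritical exponent $\vartheta$ — has already been done in Lemmas~\ref{lem:NormPEstimate1} and~\ref{lem:NormPEstimate2}, and what remains is the short Young-inequality argument above. The only points deserving a line of care are bookkeeping ones: making sure the constant $\epsilon$ coming out of Young's inequality can be pushed below $\epsilon_0$, and keeping track that $P$ is considered as an operator into $W = L_p(\Gamma^*;\R) \times W^{1-\frac1p}_p(\d\Gamma^*;\R)$ (not merely into $\R \times \{0\}$), so that Theorem~\ref{thm:Semigroup1} and Lemma~\ref{lem:SemigroupPerturbation} are applied in the same space.
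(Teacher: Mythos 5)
Your proposal is correct and follows essentially the same route as the paper: it invokes Theorem~\ref{thm:Semigroup1} for the generator $-A$, verifies the structural hypotheses of Lemma~\ref{lem:SemigroupPerturbation} for $P$, and then upgrades the interpolation estimate of Lemma~\ref{lem:NormPEstimate2} to a relative bound with arbitrarily small $\epsilon$ via Young's inequality, exactly as the paper does.
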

\begin{proof}
We will use Lemma \ref{lem:SemigroupPerturbation}. Because of Theorem \ref{thm:Semigroup1}, we know that $-A$ generates an analytic semigroup. As stated immediately after the definition of $P$, the assumptions ``$\mathcal{D}(P) \supseteq \mathcal{D}(A)$'' and ``$P$ closed'' are satisfied and therefore only (\ref{eq:PerturbationProp}) remains to be proven. For $\vartheta \in (0,1)$ as in Lemma \ref{lem:NormPEstimate2} we define $p' := \frac{1}{\vartheta}$ and $q' := \frac{1}{1 - \vartheta}$, which gives $1 < p',q' < \infty$ and $\frac{1}{p'} + \frac{1}{q'} = \vartheta + 1 - \vartheta = 1$. Young's inequality with $\varepsilon$ leads to
\begin{align*}
	\norm{Px}_W & \leq c \norm{Ax}_W^\vartheta \norm{x}_W^{1-\vartheta} + c \norm{x}_W \leq c \epsilon \left(\norm{Ax}_W^\vartheta\right)^{\frac{1}{\vartheta}} 
					  + c \left(\frac{\vartheta}{\epsilon}\right)^{q-1} (1 - \vartheta) \left(\norm{x}_W^{1-\vartheta}\right)^{\frac{1}{1-\vartheta}} + c \norm{x}_W \\
					& = c \epsilon \norm{Ax}_W + M(\vartheta,\epsilon) \norm{x}_W
\end{align*}
in which we used Lemma \ref{lem:NormPEstimate2} in the first inequality. Since $\epsilon > 0$ can be chosen arbitrarily small, we get the desired statement (\ref{eq:PerturbationProp}) of Lemma \ref{lem:SemigroupPerturbation}.
\end{proof}

\section{Application}\label{sec:Application}

In the process of using the GPLS, it will be necessary to make use of a better suited parametrization of the SSC $\Gamma^*$. We will assume w.l.o.g. that the center of the SSC $\Gamma^*$ lies on the $z$-axis and has height $H^* \in (-R^*,R^*)$ over or under the $x$-$y$-plane. The perfect fit for SCs are spherical coordinates shifted in $z$-direction by $H^*$, which will be introduced now.

Let $a$ and $b$ be given as in (\ref{eq:ParameterEquations}). Then we know by the considerations in Section \ref{sec:SphericalCaps} that for arbitrary $\alpha^* \in I_\alpha$ there is some $r^* \in I_r$ such that
\begin{align*}
	\cos(\alpha^*) = \frac{b}{r^*} - a \qquad \text{ and } \qquad \sin(\alpha^*) = \sqrt{1 - \left(\frac{b}{r^*} - a\right)^2}
\end{align*}
as well as $R^* \in (0, \infty)$ and $H^* \in (-R^*,R^*)$ to satisfy
\begin{align*}
	R^* := \frac{r^*}{\sin(\alpha^*)} \qquad \text{ and } \qquad H^* := R^* \cos(\alpha^*).
\end{align*}
Then the parametrization of $\Gamma^*$ reads as
\begin{align}\label{eq:SphericalCoordinates}
	P(\phi,\theta) := \begin{pmatrix}
								R^* \sin(\phi) \sin(\theta) \\
								R^* \cos(\phi) \sin(\theta) \\
								R^* \cos(\theta) + H^*
							\end{pmatrix} \quad \text{ with } \phi \in [0, 2\pi] \text{ and } \theta \in [0, \pi - \alpha^*].
\end{align}
We can use this to calculate the following quantities in the case of $\Gamma^*$ being an SSC as follows
\begin{align*}
	n_{\Gamma^*} &= \begin{pmatrix}
							 \sin(\phi) \sin(\theta) \\
							 \cos(\phi) \sin(\theta) \\
							 \cos(\theta)
						 \end{pmatrix}, &	\sqrt{g} &= {R^*}^2 \sin(\theta), \\
	\kappa_1^* &= \kappa_2^* = -\frac{1}{R^*}, & |\sigma^*|^2 &= \frac{2}{{R^*}^2}, \\
	H_{\Gamma^*} &= -\frac{2}{R^*}, & K_{\Gamma^*} &= \frac{1}{{R^*}^2}, \\
	n_{\d \Gamma^*} &= \begin{pmatrix}
								 -\sin(\phi) \cos(\alpha^*) \\
								 -\cos(\phi) \cos(\alpha^*) \\
								 -\sin(\alpha^*)
							 \end{pmatrix}, &	\vec{\tau}^* &= \begin{pmatrix}
																			 \cos(\phi)  \\
																			 -\sin(\phi) \\
																			 0
																		 \end{pmatrix}, \\
	n_{\d D^*} &= \begin{pmatrix}
						  \sin(\phi) \\
						  \cos(\phi) \\
						  0
					  \end{pmatrix}, & \vec{\kappa}^* &= \frac{1}{R^* \sin(\alpha^*)} \begin{pmatrix}
																												-\sin(\phi) \\
																												-\cos(\phi) \\
																												0
																											\end{pmatrix}, \\
	\kappa_{\d D^*} &= \skp{\vec{\kappa}^*}{n_{\d D^*}} = -\frac{1}{R^* \sin(\alpha^*)}, \\
	\nabla_{\Gamma^*} &= \frac{1}{R^* \sin(\theta)} \begin{pmatrix}
																		\cos(\phi) \\
																		-\sin(\phi) \\
																		0
																	\end{pmatrix} \d_\phi + \frac{1}{R^*} \begin{pmatrix}
																														  \sin(\phi) \cos(\theta) \\
																														  \cos(\phi) \cos(\theta) \\
																														  -\sin(\theta)
																													  \end{pmatrix} \d_\theta, & & \\
	\Delta_{\Gamma^*} &= \frac{1}{{R^*}^2 \sin(\theta)^2} \d^2_{\phi \phi} + \frac{1}{{R^*}^2} \d^2_{\theta \theta} + \frac{1}{{R^*}^2} \cot(\theta) \d_\theta. & &
\end{align*}

Before we can check the assumptions of the GPLS it will be necessary to determine the nulls pace of the operator $A_0$. For more details on the calculations in the upcoming considerations, we refer to \cite{Mue13}. The first step is to fit the equations (\ref{eq:LinearFlow1})-(\ref{eq:LinearFlow2}) to the situation of $\Gamma^*$ being an SSC with the above parametrization. Here we see that the first component of $-A_0 \rho$ has the form
\begin{align}\label{eq:InteriorForSSC}
	-(A_0 \rho)^{(1)} = \Delta_{\Gamma^*} \rho + |\sigma^*|^2 \rho - \mint_{\Gamma^*} (\Delta_{\Gamma^*} + |\sigma^*|^2) \rho \dH^2.
\end{align}
Searching for solutions of $0 = -(A_0 \rho)^{(1)}$ we immediately see that $\Delta_{\Gamma^*} \rho + |\sigma^*|^2 \rho = const.$ has to hold. And vice versa, if $\Delta_{\Gamma^*} \rho + |\sigma^*|^2 \rho$ is constant we get $-(A_0 \rho)^{(1)} = 0$. Therefore it is equivalent to solve $c = \Delta_{\Gamma^*} \rho + |\sigma^*|^2 \rho$ instead of $0 = -(A_0 \rho)^{(1)}$. Transforming the equation with respect to the parametrization from above we have to solve
\begin{align}\label{eq:NullspaceDGL}
	c = \frac{1}{\sin(\theta)^2} \rho_{\phi \phi} + \rho_{\theta \theta} + \cot(\theta) \rho_{\theta} + 2 \rho \qquad \text{ in } (0, 2\pi) \times (0, \pi - \alpha^*),
\end{align}
where the missing ${R^*}^2$ is included in the constant on the left side. For the boundary component we get
\begin{align}\label{eq:BCForSSC}
	-(A_0 \rho)^{(2)} &= -\sin(\alpha^*)^2 (n_{\d \Gamma^*} \cdot \nabla_{\Gamma^*} \rho)
   						 + \sin(\alpha^*) \cos(\alpha^*) II_{\Gamma^*}(n_{\d \Gamma^*},n_{\d \Gamma^*}) \rho \notag \\
							&+ b \sin(\alpha^*) \rho_{\sigma\sigma}
							 - b \sin(\alpha^*) \kappa_{\d D^*} \skp{\vec{\tau}^*}{(n_{\d D^*})_\sigma} \rho.
\end{align}
Using the calculations above we have
\begin{align}
	n_{\d \Gamma^*} \cdot \nabla_{\Gamma^*} \rho & = \left.\frac{1}{R^*} \rho_\theta\right|_{\theta = \pi - \alpha^*}, \notag \\ \label{eq:SSCQuantities1}
	II_{\Gamma^*}(n_{\d \Gamma^*},n_{\d \Gamma^*}) & = II_{\Gamma^*}\left(\left.\frac{P_\theta}{\norm{P_\theta}}\right|_{\theta = \pi - \alpha^*},\left.\frac{P_\theta}{\norm{P_\theta}}\right|_{\theta = \pi - \alpha^*}\right) = -\frac{1}{R^*}, \\
	\rho_{\sigma \sigma} & = \left.\frac{1}{\norm{P_\phi}} \d_\phi \left(\frac{\rho_\phi}{\norm{P_\phi}}\right)\right|_{\theta = \pi - \alpha^*} = \frac{1}{{R^*}^2 \sin(\alpha^*)^2} \rho_{\phi \phi}, \notag \\ \label{eq:SSCQuantities2}
	\skp{\vec{\tau}^*}{(n_{\d D^*})_\sigma} & = \left.\skp{\vec{\tau}^*}{\frac{(n_{\d D^*})_\phi}{\norm{P_\phi}}}\right|_{\theta = \pi - \alpha^*} = \frac{1}{R^* \sin(\alpha^*)}
\end{align}
and plugging this into the equation for $-(A_0 \rho)^{(2)}$ we end up with
\begin{align*}
	-(A_0 \rho)^{(2)} = \frac{\sin(\alpha^*)}{R^*} \left(-\sin(\alpha^*) \rho_\theta - \cos(\alpha^*) \rho + \frac{b}{R^* \sin(\alpha^*)^2} \rho_{\phi \phi} + \frac{b}{R^* \sin(\alpha^*)^2} \rho\right).
\end{align*}
We divide by $\frac{\sin(\alpha^*)}{R^*} \neq 0$ and obtain the first boundary condition for the nulls pace to be
\begin{align}\label{eq:NullspaceBC1}
	0 = \left.\frac{b}{R^* \sin(\alpha^*)^2} \rho_{\phi \phi} + \frac{b}{R^* \sin(\alpha^*)^2} \rho - \sin(\alpha^*) \rho_\theta - \cos(\alpha^*) \rho\right|_{\theta = \pi - \alpha^*} \quad \text{ on } [0, 2\pi].
\end{align}
Because we transformed $A_0$ into spherical coordinates $(\phi,\theta) \in [0, 2\pi] \times [0, \pi - \alpha^*]$, we still have to impose three more boundary conditions. These represent the compatibility conditions on the ``new'' boundaries $\phi = 0$, $\phi = 2\pi$ and $\theta = 0$ that have not been present as we parametrized over $\Gamma^*$.
The second and third boundary condition represent the periodicity in $\phi$ namely
\begin{align}\label{eq:NullspaceBC2}
	0 &= \rho|_{\phi = 0} - \rho|_{\phi = 2\pi} & &\quad \text{ on } [0, \pi - \alpha^*], \\ \label{eq:NullspaceBC3}
	0 &= \rho_\phi|_{\phi = 0} - \rho_\phi|_{\phi = 2\pi} & &\quad \text{ on } [0, \pi - \alpha^*].
\end{align}
The fourth boundary condition shall guarantee continuity in the ``north pole'' of the SSC. Here we demand 
\begin{align}\label{eq:NullspaceBC4}
	const. = \rho|_{\theta = 0} \qquad \text{ on } [0, 2\pi].
\end{align}

Combining the equations (\ref{eq:NullspaceDGL}) and (\ref{eq:NullspaceBC1})-(\ref{eq:NullspaceBC4}) we have to solve the system
\begin{align}\label{eq:NullspaceSystem1}
	c &= \frac{1}{\sin(\theta)^2} \rho_{\phi \phi} + \rho_{\theta \theta} + \cot(\theta) \rho_{\theta} + 2 \rho & &\quad \text{in } (0, 2\pi) \times (0, \pi - \alpha^*), \\ \label{eq:NullspaceSystem2}
	0 &= \frac{b}{R^* \sin(\alpha^*)^2} (\rho_{\phi \phi} + \rho) - \sin(\alpha^*) \rho_\theta - \cos(\alpha^*) \rho & &\quad \text{on } [0, 2\pi] \times \{\pi - \alpha^*\}, \\ \label{eq:NullspaceSystem3}
	0 &= \rho|_{\phi = 0} - \rho|_{\phi = 2\pi} & &\quad \text{on } [0, \pi - \alpha^*], \\ \label{eq:NullspaceSystem4}
	0 &= \rho_\phi|_{\phi = 0} - \rho_\phi|_{\phi = 2\pi} & &\quad \text{on } [0, \pi - \alpha^*], \\ \label{eq:NullspaceSystem5}
	const. &= \rho|_{\theta = 0} & &\quad \text{on } [0, 2\pi]
\end{align}
to get all elements in the nulls pace of $A_0$.

First we find a special solution of the inhomogeneous system by an educated guess. It is an easy calculation to verify that $\rho^s$ given by
\begin{align}\label{eq:SpecialRho}
	\rho^s(\phi,\theta) := 1 + c_\alpha \cos(\theta)
\end{align}
with
\begin{align*}
	 c_\alpha := \frac{R^* \cos(\alpha^*) \sin(\alpha^*)^2 - b}{R^* \sin(\alpha^*)^2 - b \cos(\alpha^*)}
\end{align*}
satisfies the conditions (\ref{eq:NullspaceSystem1})-(\ref{eq:NullspaceSystem5}). Obviously, this is only possible if $R^* \sin(\alpha^*)^2 \neq b \cos(\alpha^*)$. We claim that for $R^* \sin(\alpha^*)^2 = b \cos(\alpha^*)$ there exists no function that satisfies (\ref{eq:NullspaceSystem1})-(\ref{eq:NullspaceSystem5}) with a $c \neq 0$ and will prove that fact later on in Lemma \ref{lem:CriticalCase}.

A separation ansatz $\rho(\phi,\theta) = f(\phi) g(\theta)$ is common practice to solve such a homogeneous system (\ref{eq:NullspaceSystem1})-(\ref{eq:NullspaceSystem5}). But before we start with that, we want to justify this separation of variables following the ideas from Lecture 4 and 11 of \cite{Sai07}.

The operator $\Delta^B: X_1 \longrightarrow X_0$ is defined as
\begin{align*}
	\Delta^B \rho := \begin{pmatrix}
							  -\Delta_{\Gamma^*} \rho^{(1)} - |\sigma^*|^2 \rho^{(1)} \\
							  \sin(\alpha^*)^2 (n_{\d \Gamma^*} \cdot \nabla_{\Gamma^*} \rho^{(1)})
   						  + \frac{\sin(\alpha^*) \cos(\alpha^*)}{R^*} \rho^{(2)} - b \sin(\alpha^*) \rho^{(2)}_{\sigma\sigma}
							  - \frac{b}{{R^*}^2 \sin(\alpha^*)} \rho^{(2)}
						  \end{pmatrix}
\end{align*}
and is symmetric with respect to the inner product defined by
\begin{align*}
	\skp{u}{v}_{\tilde{L_2}} := \int_{\Gamma^*} u^{(1)} v^{(1)} \dH^2 + \int_{\d \Gamma^*} \frac{1}{\sin(\alpha^*)^2} u^{(2)} v^{(2)} \dH^1
\end{align*}
as one can see from straightforward calculations. Therefore all eigenvalues are real and the eigenfunctions corresponding to different eigenvalues are orthogonal with respect to this inner product.

\begin{rem}\label{rem:ImportantInnerProduct}
This $\tilde{L_2}$-inner product will also play an important role later on, while proving the solvability of (\ref{eq:EllipticPDE1})-(\ref{eq:EllipticPDE2}).
\end{rem}

In $(\phi,\theta)$-coordinates $\Delta^B$ is given as
\begin{align*}
	\Delta^B \rho = \begin{pmatrix}
							 -\dfrac{1}{{R^*}^2 \sin(\theta)^2} \rho^{(1)}_{\phi \phi} - \dfrac{1}{{R^*}^2} \rho^{(1)}_{\theta \theta} - \dfrac{1}{{R^*}^2} \cot(\theta) \rho^{(1)}_\theta - \dfrac{2}{{R^*}^2} \rho^{(1)} \\
							 \dfrac{\sin(\alpha^*)^2}{R^*} \rho^{(1)}_\theta + \dfrac{\sin(\alpha^*) \cos(\alpha^*)}{R^*} \rho^{(2)} - \dfrac{b}{{R^*}^2 \sin(\alpha^*)} (\rho^{(2)}_{\phi \phi} + \rho^{(2)})
						 \end{pmatrix},
\end{align*}
where we have to impose the boundary conditions $\rho|_{\phi = 0} = \rho|_{\phi = 2\pi}$ and $\rho_\phi|_{\phi = 0} = \rho_\phi|_{\phi = 2\pi}$. We will decompose $\Delta^B$ into a part corresponding to differentiation with respect to $\phi$ and another part corresponding to differentiation with respect to $\theta$. For $f: [0,2\pi] \longrightarrow \R^2: \phi \longrightarrow (f^{(1)}(\phi), f^{(2)}(\phi))$ the $\phi$-part shall be given as
\begin{align*}
	\Delta_\phi f := \begin{pmatrix}
							  -f^{(1)}_{\phi\phi} \\
							  -f^{(2)}_{\phi\phi}
						  \end{pmatrix}
\end{align*}
with its boundary conditions $f(0) = f(2\pi)$ and $f_\phi(0) = f_\phi(2\pi)$. It is easy to see that the eigenvalues of this operator are $k^2$ for $k \in \N$. We use these eigenvalues of $\Delta_\phi$ to define the $\theta$-part of $\Delta^B$ as
\begin{align}\label{eq:ThetaPart}
	\Delta^k_\theta g := \begin{pmatrix}
									-\dfrac{1}{{R^*}^2} g^{(1)}_{\theta \theta} - \dfrac{1}{{R^*}^2} \cot(\theta) g^{(1)}_\theta - \dfrac{1}{{R^*}^2} \left(2 - \dfrac{k^2}{\sin(\theta)^2}\right) g^{(1)} \\
									\dfrac{\sin(\alpha^*)^2}{R^*} g^{(1)}_\theta(\pi - \alpha^*) + \dfrac{\sin(\alpha^*) \cos(\alpha^*)}{R^*} g^{(2)} - \dfrac{b (1 - k^2)}{{R^*}^2 \sin(\alpha^*)} g^{(2)}
								\end{pmatrix},
\end{align}
where $g^{(2)}$ is in $\R$ and $g^{(1)}$ is a function $g^{(1)}: [0, \pi - \alpha^*] \longrightarrow \R$ with $g^{(1)}(\pi - \alpha^*) = g^{(2)}$. Assume that we have an eigenpair $(k^2, f_k)$ of $\Delta_\phi$ and for this $k \in \N$ an eigenpair $(\mu_k, g_k)$ of $\Delta^k_\theta$. Then $(\mu_k, f_k g_k)$ is an eigenpair of $\Delta^B$, since
\begin{align}\label{eq:EigenpairsLaplaceB}
	\Delta^B (f_k g_k) = \mu_k f_k g_k
\end{align}
as one can easily check by straightforward calculations.

The next step in our separation ansatz justification is to show that there is an orthogonal basis of eigenfunctions of $\Delta^k_\theta$ in a certain space. We define a weighted $L_2$- and $W^1_2$-space via
\begin{align*}
	\skp{u}{v}_{\hat{L_2}} & := {R^*}^2 \int^{\pi - \alpha^*}_0 u^{(1)} v^{(1)} \sin(\theta) d\theta + \frac{R^*}{\sin(\alpha^*)} u^{(2)} v^{(2)}, \\
	\hat{L_2} & := \left\{f: [0, \pi - \alpha^*] \longrightarrow \R^2 \text{ measurable} \ \left| \ f^{(2)} \text{ is constant, } \norm{f}_{\hat{L_2}} := \sqrt{\skp{f}{f}_{\hat{L_2}}} < \infty\right.\right\}, \\
	\skp{u}{v}_{\hat{W^1_2}} & := \int^{\pi - \alpha^*}_0 u^{(1)}_\theta v^{(1)}_\theta \sin(\theta) d\theta + \int^{\pi - \alpha^*}_0 \frac{k^2}{\sin(\theta)} u^{(1)} v^{(1)} d\theta + \skp{u}{v}_{\hat{L_2}}, \\
	\hat{W^1_2} & := \left\{f \in \hat{L_2} \left| \d_\theta f \in \hat{L_2}, \norm{f}_{\hat{W^1_2}} := \sqrt{\skp{f}{f}_{\hat{W^1_2}}} < \infty, f^{(1)}(\pi - \alpha^*) = f^{(2)}\right.\right\}
\end{align*}
and a bilinear form $B: \hat{W^1_2} \times \hat{W^1_2} \longrightarrow \R$ by
\begin{align*}
	B(u,v) & := \int^{\pi - \alpha^*}_0 u^{(1)}_\theta v^{(1)}_\theta \sin(\theta) - \left(2 - \frac{k^2}{\sin(\theta)^2}\right) u^{(1)} v^{(1)} \sin(\theta) d\theta \\
			 & + \left(\cos(\alpha^*) - \frac{b (1 - k^2)}{R^* \sin(\alpha^*)^2}\right) u^{(2)} v^{(2)}.
\end{align*}
Then we obtain
\begin{align}\label{eq:BilinearDTheta}
	\skp{\Delta^k_\theta g}{h}_{\hat{L_2}} = B(g,h)
\end{align}
for all $g, h \in \hat{W^1_2}$. This bilinear form is bounded with respect to the norm defined on $\hat{W^1_2}$. Moreover, the modified bilinear form
\begin{align*}
	B_c: \hat{W^1_2} \times \hat{W^1_2} \longrightarrow \R: (u,v) \longmapsto B(u,v) + c\skp{u}{v}_{\hat{L_2}}
\end{align*}
is also bounded and in addition positive definite for
\begin{align*}
	c > \max \left\{\frac{2}{{R^*}^2}, \frac{b (1 - k^2)}{{R^*}^2 \sin(\alpha^*)} - \frac{\cos(\alpha^*) \sin(\alpha^*)}{R^*}\right\} > 0.
\end{align*}
Therefore $B_c$ satisfies all assumptions for the lemma of Lax-Milgram and there exists a bounded operator
\begin{align*}
	\left(\Delta^k_\theta + c \id\right)^{-1}: \hat{L_2} \longrightarrow \hat{W^1_2}
\end{align*}
corresponding to a weak solution operator for $(\Delta^k_\theta + c \id) g = f$ with $f \in \hat{L_2}$. We will show in Lemma \ref{lem:CompactEmbedding} that regardless of our modified definition of the $\hat{L_2}$- and $\hat{W^1_2}$-space the compact embedding $\hat{W^1_2} \hookrightarrow \hat{L_2}$ holds true as usual. Therefore
\begin{align*}
	\left(\Delta^k_\theta + c \id\right)^{-1}: \hat{L_2} \longrightarrow \hat{W^1_2} \hookrightarrow \hat{L_2}
\end{align*}
is a compact operator. By the spectral theorem for compact operators we know that $\left(\Delta^k_\theta + c \id\right)^{-1}$ has countably many eigenfunctions $(g^k_m)_{m \in \N}$, that form an orthonormal basis of $\hat{L_2}$. The eigenfunctions are invariant under inversion and shifting, hence also the eigenfunctions of $\Delta^k_\theta$ are an orthonormal basis of $\hat{L_2}$ as well.

\begin{rem}\label{rem:IsolatedEVs}
The spectral theorem for compact operators also states that the eigenvalues of $\left(\Delta^k_\theta + c \id\right)^{-1}$ form a sequence converging to zero. In particular, the eigenvalues have no accumulation point other than $0$. Therefore the eigenvalues of the non-inverted operator have no accumulation point. The shift of the eigenvalues by $c$ does not change this fact. Thus all eigenvalues of $\Delta^k_\theta$ and with them also the eigenvalues of $\Delta^B$ are isolated.
\end{rem}

It is well-known that also the eigenfunctions $(f_k)_{k \in \N}$ of $\Delta_\phi$, given by $\sin(k \phi)$ and $\cos(k \phi)$, form an orthogonal basis in $L_2([0, 2\pi])$.

Now we assume that there is an eigenfunction $u$ of $\Delta^B$ corresponding to the eigenvalue $\lambda$ that is not in the span of all functions that are in product form. Since we know that all eigenfunctions corresponding to different eigenvalues of $\Delta^B$ are orthogonal with respect to the $\tilde{L_2}$-inner product and $f_k g^k_m$ is an eigenfunction of $\Delta^B$, we see that for arbitrary $k, m \in \N$ we would obtain
\begin{align*}
	0 & = \skp{u}{f_k g^k_m}_{\tilde{L_2}} = \int^{\pi - \alpha^*}_0 \int^{2 \pi}_0 u^{(1)}(\phi,\theta) f_k^{(1)}(\phi) {g^k_m}^{(1)}(\theta) {R^*}^2 \sin(\theta) d\theta d\phi \\
	  & + \frac{1}{\sin(\alpha^*)^2} \int^{2 \pi}_0 u^{(2)}(\phi) f_k^{(2)}(\phi) {g^k_m}^{(2)} R^* \sin(\alpha^*) d\phi \\
	  & = {R^*}^2 \int^{\pi - \alpha^*}_0 \left(\int^{2 \pi}_0 u^{(1)}(\phi,\theta) f_k^{(1)}(\phi) d\phi\right) {g^k_m}^{(1)}(\theta) \sin(\theta) d\theta \\
	  & + \frac{R^*}{\sin(\alpha^*)} \left(\int^{2 \pi}_0 u^{(1)}(\phi,\pi - \alpha^*) f_k^{(2)}(\phi) d\phi\right) {g^k_m}^{(2)} \\
	  & = \skp{\int^{2\pi}_0 u(\phi,\theta) f_k(\phi) d\phi}{g^k_m}_{\hat{L_2}}.
\end{align*}
For each $k$ the eigenfunctions $(g^k_m)_{m \in \N}$ are complete in $\hat{L_2}$ and so we get
\begin{align*}
	0 = \int^{2\pi}_0 u(\phi,\theta) f_k(\phi) d\phi
\end{align*}
for all $k \in \N$ and almost every $\theta \in [0, \pi - \alpha^*]$. Since $(f_k)_{k \in \N}$ is complete in $L_2([0, 2\pi])$ equipped with the usual $L_2$-inner product, we end up with $u(\phi, \theta) = 0$ almost everywhere. Therefore we arrived at a contradiction to our assumption that $u$ is an eigenfunction. This proves that all eigenfunctions are in the span of functions in product form and justifies the separation ansatz. The last missing ingredient is the proof of the compactness of the embedding $\hat{W^1_2} \hookrightarrow \hat{L_2}$, which we will present now.

\begin{lemma}\label{lem:CompactEmbedding}
The embedding $\hat{W^1_2} \hookrightarrow \hat{L_2}$ is compact.
\end{lemma}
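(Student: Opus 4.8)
Since $\|\cdot\|_{\hat L_2}\le\|\cdot\|_{\hat W^1_2}$ by the very definition of the $\hat W^1_2$-norm, the embedding is automatically continuous and only its compactness needs to be shown. The point is that the weight $\sin\theta$ degenerates \emph{only} at the ``north pole'' $\theta=0$: since $0<\alpha^*<\pi$ we have $\pi-\alpha^*<\pi$, hence $\sin\theta>0$ on $(0,\pi-\alpha^*]$ and $\sin\theta\asymp\theta$ on all of $[0,\pi-\alpha^*]$; away from $0$ the weights $\sin\theta$ and $1/\sin\theta$ are comparable to $1$, and the only other non-standard feature, the $\R$-valued second component, lives in a finite-dimensional space. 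The plan is to reduce the statement, up to equivalent norms, to the classical Rellich--Kondrachov theorem on a planar disk by passing to angular Fourier modes.

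Concretely, let $\mathbb B:=\{x\in\R^2:|x|<\pi-\alpha^*\}$ and, writing $x$ in polar coordinates $(r,\phi)$, associate to $f=(f^{(1)},f^{(2)})\in\hat W^1_2$ the function $G_f(r,\phi):=f^{(1)}(r)\,e^{ik\phi}$. Rewriting $\|G_f\|_{W^1_2(\mathbb B)}$ in polar coordinates and using $\sin\theta\asymp\theta$ on $[0,\pi-\alpha^*]$, one checks that $f\mapsto G_f$ is, up to equivalent norms, an isomorphism of $\hat W^1_2$ onto the closed subspace $\mathcal H_k:=\{G\in W^1_2(\mathbb B):G(r,\phi)=h(r)e^{ik\phi}\}$; here $\int_0^{\pi-\alpha^*}\frac{k^2}{\sin\theta}|f^{(1)}|^2\,d\theta$ corresponds to $\int_{\mathbb B}|x|^{-2}|\partial_\phi G|^2\,dx$, and the identification $f^{(1)}(\pi-\alpha^*)=f^{(2)}$ merely pins $f^{(2)}$ to the (well-defined) boundary trace coefficient $\mathrm{tc}(G_f)$ of $G_f$ on $\partial\mathbb B$. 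In the same way $\hat L_2$ is, up to equivalent norms, isomorphic to $L_2^{(k)}(\mathbb B)\oplus\R$, where $L_2^{(k)}(\mathbb B)$ collects the functions $h(r)e^{ik\phi}$ belonging to $L_2(\mathbb B)$, and under these identifications the embedding $\hat W^1_2\hookrightarrow\hat L_2$ becomes the map $\mathcal H_k\ni G\mapsto(G,\mathrm{tc}(G))$. Now $G\mapsto G$ is compact from $W^1_2(\mathbb B)$ to $L_2(\mathbb B)$ by Rellich--Kondrachov, hence so is its restriction to $\mathcal H_k$; and $G\mapsto\mathrm{tc}(G)$ is a bounded linear map of $\mathcal H_k$ into the finite-dimensional space $\R$, hence compact. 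A sum of compact operators being compact, the embedding $\hat W^1_2\hookrightarrow\hat L_2$ is compact.

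Alternatively one can argue directly: extract a subsequence along which $f_n^{(2)}$ converges in $\R$; on each interval $[\delta,\pi-\alpha^*]$ the sequence $(f_n^{(1)})$ is bounded in $W^1_2$, so a diagonal argument over $\delta=\tfrac1m$ yields a subsequence converging in $L_2([\delta,\pi-\alpha^*])$ for every $\delta>0$; then a uniform tail estimate $\sup_n\int_0^\delta|f_n^{(1)}|^2\sin\theta\,d\theta\to0$ as $\delta\to0$ lets one patch the pieces together and conclude that the subsequence is Cauchy in $\hat L_2$. In either approach the only genuine difficulty is the behaviour at the degenerate point $\theta=0$: in the pull-back it is the verification that $f\mapsto G_f$ is a norm-equivalence onto the honest space $\mathcal H_k$ (which uses $\sin\theta\asymp\theta$, i.e.\ $\pi-\alpha^*<\pi$); in the direct approach the tail estimate is immediate for $k\ge1$ from $\int_0^\delta|f^{(1)}|^2\sin\theta\,d\theta\le(\sin\delta)^2\int_0^\delta\frac{|f^{(1)}|^2}{\sin\theta}\,d\theta$, whereas for $k=0$ there is no $\frac{k^2}{\sin\theta}$-term and one has to squeeze the decay out of the weighted Dirichlet term $\int_0^\delta|f^{(1)}_\theta|^2\sin\theta\,d\theta$ alone, e.g.\ via the fundamental theorem of calculus together with the bound $|f_n^{(1)}(\delta)|\lesssim\delta^{-1/2}$ (from $W^1_2([\tfrac\delta2,\pi-\alpha^*])\hookrightarrow C$) and the integrability of the logarithmic singularity $\int_\theta^\delta\frac{d\tau}{\sin\tau}\lesssim\log(\delta/\theta)$.
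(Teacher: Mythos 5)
Both of the arguments you outline are correct, and your ``alternative'' direct argument is in fact essentially the route the paper takes: the paper also derives a logarithmic modulus of continuity $|u_n(t)-u_n(s)|\lesssim|\ln t-\ln s|^{1/2}$ from Cauchy--Schwarz applied to the $\sin\theta$-weighted Dirichlet integral, extracts a diagonal subsequence that converges on every $[a,\pi-\alpha^*]$, and then handles the degenerate endpoint $\theta=0$; the only difference is that the paper passes to an a.e.\ pointwise limit and closes with dominated convergence (dominating $|u_n-u|^2\sin\theta$ by an integrable multiple of $\sin\theta(1+|\ln\theta|)$), whereas you propose a uniform tail estimate $\sup_n\int_0^\delta|f_n^{(1)}|^2\sin\theta\,d\theta\to0$ followed by a Cauchy argument. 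The paper's variant has the small advantage of covering all $k\geq0$ at once, since it never invokes the $k^2/\sin\theta$-term; your tail estimate needs that term for $k\geq1$ and, as you correctly note, falls back on the same logarithmic estimate for $k=0$. Your main argument is genuinely different from the paper and, to my mind, cleaner: pulling $\hat{W^1_2}$ and $\hat{L_2}$ back, via $\sin\theta\asymp\theta$ on $[0,\pi-\alpha^*]$, to the $k$-th angular Fourier mode of $W^1_2$ and $L_2$ on the disk $\mathbb B$ exposes the structural reason the degenerate weight is harmless (it is the Jacobian of polar coordinates), replaces the ad hoc logarithmic computations by a single appeal to Rellich--Kondrachov, and dispatches the $\R$-valued second component simply by finite-dimensionality. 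The one step you should spell out is the boundedness of the trace coefficient $\mathrm{tc}\colon\mathcal H_k\to\R$, say by applying the trace theorem $W^1_2(\mathbb B)\to L_2(\partial\mathbb B)$ to $h(r)e^{ik\phi}$; that, together with Rellich--Kondrachov for the first component, is exactly what makes the pair $G\mapsto(G,\mathrm{tc}(G))$ compact.
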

\begin{proof}
To this end let $(u_n)_{n \in \N} \subseteq \hat{W^1_2}$ be bounded. Then we obtain for $t,s \in [0, \pi - \alpha^*]$
\begin{align*}
	|u_n(t) - u_n(s)| & = \left|\int^t_s u_n'(x) dx\right| \leq \left|\int^t_s \sqrt{\sin(x)} u_n'(x) \frac{1}{\sqrt{\sin(x)}} dx\right| \\
							& \leq \int^t_s \left|\sqrt{\sin(x)} u_n'(x)\right| \left|\frac{1}{\sqrt{\sin(x)}}\right| dx \\
							& \leq \underbrace{\left(\int^t_s \sin(x) |u_n'(x)|^2 dx\right)^{\frac{1}{2}}}_{\leq c \text{ since $(u_n)_{n \in \N} \subseteq \hat{W^1_2}$ in bounded}} \left(\int^t_s \frac{1}{\sin(x)} dx\right)^{\frac{1}{2}}.
\end{align*}
Since $\sin(\pi - \alpha^*) > 0$ we can still find a linear function below $\sin(x)$ to continue the estimate as follows
\begin{align}\label{eq:LogarithmicGrowth}
	|u_n(t) - u_n(s)| \leq \hat{c} \left(\int^t_s \frac{1}{x} dx\right)^{\frac{1}{2}} = \hat{c} \left(\ln(t) - \ln(s)\right)^{\frac{1}{2}}.
\end{align}
The fact that the right-hand side is independent of $n$ immediately shows that $(u_n)_{n \in \N}$ is equicontinuous on every compact interval $[a, \pi - \alpha^*] \subseteq (0, \pi - \alpha^*]$. Also on each such compact interval we have the equivalence of the $\hat{W^1_2}$- and $W^1_2$-norms due to $0 < c \leq \sin(\theta) \leq C$. Therefore the usual compact embedding $\hat{W^1_2}([a, \pi - \alpha^*]) \hookrightarrow L_2([a, \pi - \alpha^*])$ holds. Here we define $\hat{W^1_2}([a, \pi - \alpha^*])$ and $L_2([a, \pi - \alpha^*])$ in the same manner as $\hat{W^1_2}$ and $\hat{L_2}$ just the domain for the first component changes to $[a, \pi - \alpha^*]$ instead of $[0, \pi - \alpha^*]$. Hence the bounded sequence $(u_n)_{n \in \N}$ has a subsequence converging in $L_2([a, \pi - \alpha^*])$, which for simplicity shall be called $(u_n)_{n \in \N}$ again. Since $L_2$-convergence implies the pointwise convergence a.e. of a subsequence, we obtain an a.e. pointwise limit of $(u_n)_{n \in \N}$ on $[a, \pi - \alpha^*]$ for each $a > 0$. \\
Using a diagonalisation argument we can show the existence of a subsequence, again denoted by $(u_n)_{n \in \N}$, which converges pointwise on $(0, \pi - \alpha^*]$ to a function that we call $u$. \\
The estimate (\ref{eq:LogarithmicGrowth}) also shows
\begin{align*}
	|u_n(t) - u(t)|^2 \leq c \left(|u_n(\pi - \alpha^*) - u(\pi - \alpha^*)|^2 + |\ln(t)|\right).
\end{align*}
Therefore
\begin{align*}
	\sin(\theta) |u_n(\theta) - u(\theta)|^2 \leq \tilde{c} \sin(\theta) |\ln(\theta)| \qquad \text{ for } \theta \in (0, \pi - \alpha^*)
\end{align*}
and since $\lim_{\theta \rightarrow 0}\limits \sin(\theta) |\ln(\theta)| = 0$ we found a function dominating the sequence $(u_n)_{n \in \N}$, which is still integrable. By dominated convergence theorem we get the $\hat{L_2}$-convergence of $(u_n)_{n \in \N}$. This finally shows that the embedding $\hat{W^1_2} \hookrightarrow \hat{L_2}$ is compact.
\end{proof}

After knowing that all solutions of the homogeneous system (\ref{eq:NullspaceSystem1})-(\ref{eq:NullspaceSystem5}) will be in the span of functions with product structure $\rho(\phi,\theta) = f(\phi) g(\theta)$, we can perform a separation ansatz to transform (\ref{eq:NullspaceSystem1}) with $c = 0$ into equations for $f$ and $g$. Since we are only interested in non-trivial solutions for $\rho$ we can assume $f \not\equiv 0$ and $g \not\equiv 0$. We get
\begin{align}\label{eq:PreSepAnsatz}
	0 & = \frac{1}{\sin(\theta)^2} \rho_{\phi \phi} + \rho_{\theta \theta} + \cot(\theta) \rho_{\theta} + 2 \rho \notag \\
	  & = \frac{1}{\sin(\theta)^2} f'' g + f g'' + \cot(\theta) f g' + 2 f g.
\end{align}
This is equivalent to
\begin{align*}
	-\frac{f''}{f} &= \sin(\theta)^2 \frac{g''}{g} + \sin(\theta) \cos(\theta) \frac{g'}{g} + 2 \sin(\theta)^2,
\end{align*}
where the left hand side is independent of $\theta$ and the right hand side is independent of $\phi$. This justifies
\begin{align}\label{eq:Separationansatz}
	-\frac{f''}{f} = \sin(\theta)^2 \frac{g''}{g} + \sin(\theta) \cos(\theta) \frac{g'}{g} + 2 \sin(\theta)^2 =: \lambda \in \R
\end{align}
This leads to the ODE $f'' + \lambda f = 0$ for $f$ and a second ODE for $g$ that we will examine later.

\begin{rem}
The fact that $f$ or $g$ could be zero in some points does not play any role for (\ref{eq:Separationansatz}). For a fixed $\theta_0 \in [0, \pi - \alpha^*]$ with $g(\theta_0) \neq 0$ we definitely get the ODE $f'' + \lambda f = 0$ on the set $U := \{\phi \in [0, 2\pi] | f(\phi) \neq 0\}$. Assuming that $\phi_0 \in U^c$ we see $f(\phi_0) = 0$ and going back to (\ref{eq:PreSepAnsatz}) we get $0 = \frac{1}{\sin(\theta_0)^2} f''(\phi_0) g(\theta_0)$. Since we assumed $g(\theta_0) \neq 0$, this leads to $f''(\phi_0) = 0$ and therefore $f'' + \lambda f = 0$ is also valid for this $\phi_0$. Interchanging the roles of $f$ and $g$ leads to the same result for $g$.
\end{rem}

The equations (\ref{eq:NullspaceSystem3}) and (\ref{eq:NullspaceSystem4}) translate into boundary conditions for $f$ namely $f(0) = f(2\pi)$ and $f'(0) = f'(2\pi)$. The solution of $f'' + \lambda f = 0$ is obviously given by
\begin{align*}
  f(\phi) = \begin{cases}
    c_1 e^{\sqrt{-\lambda} \phi} + c_2 e^{-\sqrt{-\lambda} \phi}	& \text{ if } \lambda < 0\,, \\
    c_1 + c_2 \phi																& \text{ if } \lambda = 0\,, \\
    c_1 \cos(\sqrt{\lambda} \phi) + c_2 \sin(\sqrt{\lambda} \phi) &
    \text{ if } \lambda > 0\,.
				 \end{cases}
\end{align*}
The boundary conditions leave no non-trivial solution in the case $\lambda \neq k^2$ with $k \in \N$ and in the case $\lambda = k^2$ we end up with the solutions
\begin{align}\label{eq:SolutionF}
	f_k(\phi) = c_1 \cos(k \phi) + c_2 \sin(k \phi) \qquad \text{ with } k \in \N.
\end{align}

Hence from (\ref{eq:Separationansatz}) we get the following ODE for $g$
\begin{align}\label{eq:DGLForG}
	0 = g'' + \cot(\theta) g' + \left(2  - \frac{k^2}{\sin(\theta)^2}\right) g.
\end{align}
So far we have not considered the boundary equations (\ref{eq:NullspaceSystem2}) and (\ref{eq:NullspaceSystem5}). Looking first at (\ref{eq:NullspaceSystem5}) we see
\begin{align*}
	const. = \rho|_{\theta = 0} = f(\phi) g(0),
\end{align*}
which means that either $f(\phi)$ is constant and $\lim_{\theta \downarrow 0}\limits g(\theta)$ exists or otherwise $g(0) = 0$. Since $f$ is constant if $k = 0$, we obtain the condition $g(0) = 0$ for all $k \geq 1$ and ``$\lim_{\theta \downarrow 0}\limits g(\theta)$ exists'' for $k = 0$. \\
Last but not least (\ref{eq:NullspaceSystem2}) transforms into
\begin{align*}
	0 &= \frac{b (1 - k^2)}{R^* \sin(\alpha^*)^2} g(\pi - \alpha^*) - \sin(\alpha^*) g'(\pi - \alpha^*) - \cos(\alpha^*) g(\pi - \alpha^*).
\end{align*}
Hence (\ref{eq:NullspaceSystem2}) and (\ref{eq:NullspaceSystem5}) now read as
\begin{align}\label{eq:BC1ForG}
	0 &= g(0) & &\text{ if $k \geq 1$}, \\ \label{eq:BC2ForG}
	\lim_{\theta \downarrow 0}\limits g(\theta) &\text{ exists } & &\text{ if $k = 0$}, \\ \label{eq:BC3ForG}
	0 &= \left(\frac{b (1 - k^2)}{R^* \sin(\alpha^*)^2} - \cos(\alpha^*)\right) g(\pi - \alpha^*) - \sin(\alpha^*) g'(\pi - \alpha^*) & &\text{ if $k \geq 0$}.
\end{align}
For solving the system (\ref{eq:DGLForG})-(\ref{eq:BC3ForG}) we have to distinguish the cases $k = 0$, $k = 1$ and $k \geq 2$.

1. Case ($k = 0$): Here the general solution of (\ref{eq:DGLForG}) is
\begin{align*}
	g_0(\theta) = c_1 \cos(\theta) + c_2 \left(\frac{1}{2} \cos(\theta) \ln\left(\frac{\cos(\theta) + 1}{\cos(\theta) - 1}\right) - 1\right)
\end{align*}
as one can easily check by differentiation and (\ref{eq:BC1ForG}) does not have to be considered. Due to (\ref{eq:BC2ForG}) we must have $c_2 = 0$ and the solution reduces to $g_0(\theta) = c_1 \cos(\theta)$. The equation (\ref{eq:BC3ForG}) is then given by
\begin{align*}
	0 & = \left(\frac{b}{R^* \sin(\alpha^*)^2} - \cos(\alpha^*)\right) (-c_1 \cos(\alpha^*)) - \sin(\alpha^*) (-c_1 \sin(\alpha^*)) = c_1 \left(1 - \frac{b \cos(\alpha^*)}{R^* \sin(\alpha^*)^2}\right).
\end{align*}
But this means that for $R^* \sin(\alpha^*)^2 \neq b \cos(\alpha^*)$ this equation is only satisfied for $c_1 = 0$ and we do not have any contributing functions from the case $k = 0$. If $R^* \sin(\alpha^*)^2 = b \cos(\alpha^*)$ one can choose any $c_1 \in \R$ and obtain $g_0(\theta) = c_1 \cos(\theta)$ as the solution for $k = 0$. The significance of this special case will be clarified in Remark \ref{rem:CriticalCase} below. \\
2. Case ($k = 1$): Again it is an easy but time-consuming calculation to check that now
\begin{align*}
	g_1(\theta) &= c_1 \sin(\theta) + c_2 \left(-\frac{1}{2} \sin(\theta) \ln\left(\frac{\cos(\theta) + 1}{\cos(\theta) - 1}\right) - \cot(\theta)\right)
\end{align*}
is the general solution of (\ref{eq:DGLForG}). Due to $\frac{1}{2}
\ln\left(\frac{\cos(\theta) + 1}{\cos(\theta) - 1}\right)' =
\frac{-1}{\sin(\theta)}$ we get with L'H\^{o}pital's rule
\begin{align*}
	\lim_{\theta \downarrow 0} \frac{1}{2} \sin(\theta) \ln\left(\frac{\cos(\theta) + 1}{\cos(\theta) - 1}\right)
		= \lim_{\theta \downarrow 0} \frac{\frac{1}{2} \ln\left(\frac{\cos(\theta) + 1}{\cos(\theta) - 1}\right)}{\frac{1}{\sin(\theta)}} 
		= \lim_{\theta \downarrow 0} \frac{\frac{1}{\sin(\theta)}}{\frac{\cos(\theta)}{\sin(\theta)^2}}
		 = \lim_{\theta \downarrow 0} \tan(\theta) = 0
\end{align*}
and thereafter
\begin{align*}
	\lim_{\theta \downarrow 0} g_1(\theta) &= \lim_{\theta \downarrow 0} \left(-c_1 \sin(\theta) + c_2 \left(-\frac{1}{2} \sin(\theta) \ln\left(\frac{\cos(\theta) + 1}{\cos(\theta) - 1}\right) - \cot(\theta)\right)\right) \\
														 &= 0 + 0 - \lim_{\theta \downarrow 0} c_2 \cot(\theta) = -c_2 \lim_{\theta \downarrow 0} \cot(\theta).
\end{align*}
Hence the boundary condition (\ref{eq:BC1ForG}) requires $c_2 = 0$. Therefore the solution is $g_1(\theta) = c_1 \sin(\theta)$. The boundary condition (\ref{eq:BC2ForG}) does not have to be considered and (\ref{eq:BC3ForG}) is now always valid, because
\begin{align*}
	(0 - \cos(\alpha^*)) (c_1 \sin(\alpha^*)) - \sin(\alpha^*) (-c_1 \cos(\alpha^*)) = 0.
\end{align*}
This shows that $g_1(\theta) = c_1 \sin(\theta)$ is the solution for $k = 1$. \\
3. Case ($k \geq 2$): Here we note the close relationship between the operator $\Delta^k_\theta$ from (\ref{eq:ThetaPart}) and the operator given by the right-hand sides of (\ref{eq:DGLForG}) and (\ref{eq:BC3ForG}). We see that a solution of (\ref{eq:DGLForG}) and (\ref{eq:BC3ForG}) would correspond to the eigenvalue $0$ for the operator (\ref{eq:ThetaPart}). Therefore it is enough to show that there is no eigenvalue $0$ for $k \geq 2$ of $\Delta^k_\theta$. We assume that we would have an eigenfunction $g$ of $\Delta^k_\theta$ corresponding to the eigenvalue $0$. Using (\ref{eq:BilinearDTheta}) we would obtain
\begin{align}\label{eq:PositiveEVs}
	0 & = \skp{\Delta^k_\theta g}{g}_{\hat{L_2}} = B(g,g) \notag \\
	  & = \int^{\pi - \alpha^*}_0 (g^{(1)}_\theta)^2 \sin(\theta) - \left(2 - \frac{k^2}{\sin(\theta)^2}\right) (g^{(1)})^2 \sin(\theta) d\theta + \left(\cos(\alpha^*) - \frac{b (1 - k^2)}{R^* \sin(\alpha^*)^2}\right) (g^{(2)})^2 \notag \\
	  & = \int^{\pi - \alpha^*}_0 (g^{(1)}_\theta)^2 \sin(\theta) + \underbrace{\left(\frac{k^2}{\sin(\theta)^2} - 2\right)}_{\geq \frac{4}{1} - 2 = 2} (g^{(1)})^2 \sin(\theta) d\theta + \underbrace{\left(\frac{b (k^2 - 1)}{R^* \sin(\alpha^*)^2} + \cos(\alpha^*)\right)}_{\geq \frac{3b}{R^* \sin(\alpha^*)^2} + \cos(\alpha^*)} (g^{(2)})^2 \notag \\
	  & \geq \int^{\pi - \alpha^*}_0 (g^{(1)}_\theta)^2 \sin(\theta) + 2 (g^{(1)})^2 \sin(\theta) d\theta + \left(\frac{3b}{R^* \sin(\alpha^*)^2} + \cos(\alpha^*)\right) (g^{(2)})^2.
\end{align}
For $b > C_{crit} := -\frac{1}{3} R^* \sin(\alpha^*)^2 \cos(\alpha^*) = -\frac{1}{3} H^* \sin(\alpha^*)^2$ this is a contradiction, because the last term is strictly positive. Therefore we do not get any additional solutions from the cases $k \geq 2$.

\begin{rem}\label{rem:RangeOfB}
(i) If $\cos(\alpha^*) \geq 0$, or equivalently $H^* \geq 0$, the critical constant $C_{crit}$ is negative or zero and hence $b > C_{crit}$ is always satisfied. Therefore we have no nullspace elements for $k \geq 2$ in this case. \\
(ii) What we have done in the considerations for $k \geq 2$ above is actually much more valuable than it seems at the first glance. If we modify the calculations a little and assume that $g$ is an eigenfunction of $\Delta^k_\theta$ corresponding to an arbitrary eigenvalue $\lambda$. Then (\ref{eq:PositiveEVs}) reads as
\begin{align*}
	\lambda \skp{g}{g}_{\hat{L_2}} = \skp{\Delta^k_\theta g}{g}_{\hat{L_2}} = B(g,g) > 0.
\end{align*}
Yet, this shows that all eigenvalues $\mu_k$ of $\Delta^k_\theta$ and due to (\ref{eq:EigenpairsLaplaceB}) thereby also the eigenvalues of $\Delta^B$ are all positive for $k \geq 2$.
\end{rem}

Now we want to close the gap in our argument occurring from the case $R^* \sin(\alpha^*)^2 = b \cos(\alpha^*)$.

\begin{lemma}\label{lem:CriticalCase}
In the case $R^* \sin(\alpha^*)^2 = b \cos(\alpha^*)$ the system (\ref{eq:NullspaceSystem1})-(\ref{eq:NullspaceSystem5}) has no solution if $c \neq 0$.
\end{lemma}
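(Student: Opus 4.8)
The plan is to exploit the symmetry of $\Delta^B$ with respect to $\skp{\cdot}{\cdot}_{\tilde{L_2}}$ together with the observation that, exactly in the critical case $R^*\sin(\alpha^*)^2 = b\cos(\alpha^*)$, the function $\psi(\phi,\theta):=\cos(\theta)$ lies in the nullspace of $\Delta^B$. On $\Gamma^*$ this $\psi$ is nothing but $(z-H^*)/R^*$, hence smooth, so $\psi\in X_1$. That $\psi\in\mathcal{N}(\Delta^B)$ in the critical case is already implicit in the discussion of the case $k=0$ above: $g_0=\cos(\theta)$ solves (\ref{eq:DGLForG}) together with (\ref{eq:BC2ForG}), and the factor $1-\frac{b\cos(\alpha^*)}{R^*\sin(\alpha^*)^2}$ appearing in (\ref{eq:BC3ForG}) vanishes precisely under our standing assumption. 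Equivalently, a short computation using $\psi_{\phi\phi}=0$, $\psi_\theta|_{\theta=\pi-\alpha^*}=-\sin(\alpha^*)$, $\psi|_{\theta=\pi-\alpha^*}=-\cos(\alpha^*)$ and $\psi_{\sigma\sigma}=0$ gives
\begin{align*}
	(\Delta^B\psi)^{(1)} = -\frac{1}{{R^*}^2}\left(\psi_{\theta\theta} + \cot(\theta)\psi_\theta + 2\psi\right) = 0 , \qquad (\Delta^B\psi)^{(2)} = \frac{b\cos(\alpha^*) - R^*\sin(\alpha^*)^2}{{R^*}^2\sin(\alpha^*)} ,
\end{align*}
and the second component vanishes exactly when $R^*\sin(\alpha^*)^2 = b\cos(\alpha^*)$.

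Next I would recast the system. Since $\Delta_{\Gamma^*}+|\sigma^*|^2 = \frac{1}{{R^*}^2}\left(\frac{1}{\sin(\theta)^2}\d^2_{\phi\phi} + \d^2_{\theta\theta} + \cot(\theta)\d_\theta + 2\right)$, and since comparing the definition of $\Delta^B$ with the computation leading to (\ref{eq:NullspaceBC1}) shows that the boundary expression in (\ref{eq:NullspaceSystem2}) equals $-\frac{R^*}{\sin(\alpha^*)}(\Delta^B\rho)^{(2)}$, a pair $\rho\in X_1$ solves (\ref{eq:NullspaceSystem1})--(\ref{eq:NullspaceSystem5}) with constant $c$ if and only if
\begin{align*}
	\Delta^B\rho = \begin{pmatrix} -\dfrac{c}{{R^*}^2} \\[1ex] 0 \end{pmatrix} ,
\end{align*}
where the first entry denotes the constant function $-c/{R^*}^2$ on $\Gamma^*$; the conditions (\ref{eq:NullspaceSystem3})--(\ref{eq:NullspaceSystem5}) merely encode that $\rho$ is a well-defined element of $X_1$.

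Now I would assume, towards a contradiction, that such a $\rho$ exists with $c\neq 0$. Using the symmetry of $\Delta^B$ and $\psi,\rho\in X_1$, I obtain
\begin{align*}
	0 = \skp{\Delta^B\psi}{\rho}_{\tilde{L_2}} = \skp{\psi}{\Delta^B\rho}_{\tilde{L_2}} = -\frac{c}{{R^*}^2}\int_{\Gamma^*}\cos(\theta)\dH^2 ,
\end{align*}
the $\d\Gamma^*$-contribution dropping out because $(\Delta^B\rho)^{(2)}=0$. Finally, with $\sqrt{g}={R^*}^2\sin(\theta)$ one computes $\int_{\Gamma^*}\cos(\theta)\dH^2 = {R^*}^2\int_0^{2\pi}\int_0^{\pi-\alpha^*}\cos(\theta)\sin(\theta)\,d\theta\,d\phi = \pi{R^*}^2\sin(\alpha^*)^2\neq 0$, so the identity above forces $c=0$, a contradiction. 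One may equally well avoid the abstract operator: multiply (\ref{eq:NullspaceSystem1}) by $\cos(\theta)\sin(\theta)$, integrate over $[0,2\pi]\times[0,\pi-\alpha^*]$, integrate by parts in $\theta$, and use (\ref{eq:NullspaceSystem2}) together with the periodicity (\ref{eq:NullspaceSystem3})--(\ref{eq:NullspaceSystem4}) to discard the boundary terms; this is the same identity unpacked.

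The only point that needs some care is the first one: one should fix the meaning of a ``solution'' of (\ref{eq:NullspaceSystem1})--(\ref{eq:NullspaceSystem5}) --- in our context it is an element of $X_1$, which in particular carries enough regularity up to $\d\Gamma^*$ and at the north pole for the symmetry identity (equivalently, the integration by parts) to be legitimate --- and then keep track of the signs so that $(\Delta^B\psi)^{(2)}$ vanishes exactly in the critical case; the rest is routine. It is worth noting that the critical case forces $\cos(\alpha^*)=R^*\sin(\alpha^*)^2/b>0$, i.e.\ $H^*>0$, which fits with the fact that the particular solution $\rho^s = 1 + c_\alpha\cos(\theta)$ from (\ref{eq:SpecialRho}) degenerates there, the coefficient $c_\alpha$ having vanishing denominator.
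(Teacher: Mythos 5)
Your proof is correct, and it takes a genuinely different route from the paper's. The paper proves Lemma~\ref{lem:CriticalCase} by brute force: it expands $\rho$ in a Fourier series $\sum_m \hat\rho_m(\theta)e^{im\phi}$, reduces (\ref{eq:NullspaceSystem1})--(\ref{eq:NullspaceSystem5}) to a family of boundary value problems for ODEs in $\theta$, rules out $|m|\geq 2$ by a weighted energy estimate, solves the $m=0$ and $m=\pm 1$ equations explicitly, and reads off from the $m=0$ boundary condition that $c$ must vanish. You instead identify the structural reason for non-solvability: the rewriting of the system as $\Delta^B\rho = (-c/{R^*}^2,\,0)^T$ is correct (the coefficient $-R^*/\sin(\alpha^*)$ linking (\ref{eq:NullspaceSystem2}) to $(\Delta^B\rho)^{(2)}$ checks out), your computations showing $\psi=\cos\theta\in\mathcal{N}(\Delta^B)$ precisely when $R^*\sin(\alpha^*)^2=b\cos(\alpha^*)$ are right, and the symmetry of $\Delta^B$ in $\skp{\cdot}{\cdot}_{\tilde{L_2}}$ (asserted in the paper and justified by integration by parts for $X_1$-pairs) immediately gives the Fredholm obstruction $\skp{\psi}{(-c/{R^*}^2,0)^T}_{\tilde{L_2}}=-c\pi\sin(\alpha^*)^2=0$. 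Your argument is shorter and more conceptual, isolating the solvability condition; the paper's separation-of-variables argument is longer but has the side benefit of producing the explicit nullspace functions that feed directly into Remark~\ref{rem:CriticalCase} and into (\ref{eq:FinalRho}). One small presentational caveat: you should make sure your invocation of the symmetry of $\Delta^B$ is backed by the same integration-by-parts identity the paper alludes to, which is legitimate here because both $\psi$ (being the restriction of the affine function $(z-H^*)/R^*$) and any purported solution $\rho$ lie in $X_1$ and hence in the domain on which the formal self-adjointness holds.
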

\begin{proof}
We note that it suffices to consider $\cos(\alpha^*) > 0$, since $R^* \sin(\alpha^*)^2 = b \cos(\alpha^*)$ can not occur if $\cos(\alpha^*) \leq 0$. Moreover, we can ignore $b > C_{crit}$ in this case, since $C_{crit} < 0$. Then we rewrite (\ref{eq:NullspaceSystem1})-(\ref{eq:NullspaceSystem5}) for this particular situation and get
\begin{align}\label{eq:CriticalSystem1}
	c &= \frac{1}{\sin(\theta)^2} \rho_{\phi \phi} + \rho_{\theta \theta} + \cot(\theta) \rho_{\theta} + 2 \rho & &\quad \text{in } (0, 2\pi) \times (0, \pi - \alpha^*), \\ \label{eq:CriticalSystem2}
	0 &= \frac{1}{\cos(\alpha^*)} (\rho_{\phi \phi} + \rho) - \sin(\alpha^*) \rho_\theta - \cos(\alpha^*) \rho & &\quad \text{on } [0, 2\pi] \times \{\pi - \alpha^*\}, \\ \label{eq:CriticalSystem3}
	0 &= \rho|_{\phi = 0} - \rho|_{\phi = 2\pi} & &\quad \text{on } [0, \pi - \alpha^*], \\ \label{eq:CriticalSystem4}
	0 &= \rho_\phi|_{\phi = 0} - \rho_\phi|_{\phi = 2\pi} & &\quad \text{on } [0, \pi - \alpha^*], \\ \label{eq:CriticalSystem5}
	const. &= \rho|_{\theta = 0} & &\quad \text{on } [0, 2\pi].
\end{align}
The ideas for this proof are taken from \cite{Nar02}. The periodicity from (\ref{eq:CriticalSystem3})-(\ref{eq:CriticalSystem4}) in $\phi$ justifies an ansatz of the form
\begin{align*}
	\rho(\phi,\theta) = \sum^\infty_{m = -\infty} \hat{\rho}_m(\theta) e^{i m \phi}.
\end{align*}
Using this in (\ref{eq:CriticalSystem1}) we obtain
\begin{align*}
	\sum^\infty_{m = -\infty} c \delta_{m0} e^{i m \phi}
		& = c = \left(\frac{1}{\sin(\theta)^2} \d_{\phi \phi} + \d_{\theta \theta} + \cot(\theta) \d_{\theta} + 2\right) \rho \\
		& = \sum^\infty_{m = -\infty} \left(\hat{\rho}_m'' + \cot(\theta) \hat{\rho}_m' + \left(2 - \frac{m^2}{\sin(\theta)^2}\right) \hat{\rho}_m\right) e^{i m \phi},
\end{align*}
where $\delta_{ij}$ denotes the Kronecker delta. Interchanging the operator with the summation as well as the convergence of the sum is justified by the smoothness of $\rho$ on $[0, 2\pi] \times [0, \pi - \alpha^*]$. The same ansatz in (\ref{eq:CriticalSystem2}) and (\ref{eq:CriticalSystem5}) gives
\begin{align*}
	0 = \sum^\infty_{m = -\infty} \left(\left(\frac{1 - m^2}{\cos(\alpha^*)} - \cos(\alpha^*)\right) \hat{\rho}_m - \sin(\alpha^*) \hat{\rho}_m'\right) e^{i m \phi}
\end{align*}
and $const. = \sum^\infty_{m = -\infty}\limits \hat{\rho}_m(0) e^{i m \phi}$, respectively. Since the Fourier series is unique we can equate the coefficients and this leads to the following two ODEs
\begin{align}\label{eq:ODEFor0}
	c & = \hat{\rho}_0''(\theta) + \cot(\theta) \hat{\rho}_0'(\theta) + 2 \hat{\rho}_0(\theta), \\ \label{eq:BCFor0}
	0 & = \sin(\alpha^*) \hat{\rho}_0(\pi - \alpha^*) - \cos(\alpha^*) \hat{\rho}_0'(\pi - \alpha^*), \\ \label{eq:ICFor0}
	\lim_{\theta \downarrow 0} \hat{\rho}_0(\theta) &\text{ exists}	
\end{align}
and
\begin{align}\label{eq:ODEForM}
	0 & = \hat{\rho}_m''(\theta) + \cot(\theta) \hat{\rho}_m'(\theta) + \left(2 - \frac{m^2}{\sin(\theta)^2}\right) \hat{\rho}_m(\theta), \\ \label{eq:BCForM}
	0 & = \left(\frac{1 - m^2}{\cos(\alpha^*)} - \cos(\alpha^*)\right) \hat{\rho}_m(\pi - \alpha^*) - \sin(\alpha^*) \hat{\rho}_m'(\pi - \alpha^*), \\ \label{eq:ICForM}
	0 & = \hat{\rho}_m(0)
\end{align}
for $m \neq 0$. We start by investigating the second system. Assuming that we have a solution for it, we would get
\begin{align*}
	\frac{m^2}{\sin(\theta)^2} \hat{\rho}_m = \frac{1}{\sin(\theta)} \left(\sin(\theta) \hat{\rho}_m'\right)' + 2 \hat{\rho}_m.
\end{align*}
Multiplying with $\sin(\theta) \hat{\rho}_m$ and integrating over $[0, \pi - \alpha^*]$ gives
\begin{align*}
	m^2 \int^{\pi - \alpha^*}_0 \frac{1}{\sin(\theta)} \hat{\rho}_m^2 d\theta
		& = \int^{\pi - \alpha^*}_0 \left(\sin(\theta) \hat{\rho}_m'\right)' \hat{\rho}_m + 2 \hat{\rho}_m^2 \sin(\theta) d\theta \\
		& = \left[\sin(\theta) \hat{\rho}_m' \hat{\rho}_m\right]^{\pi - \alpha^*}_0 - \int^{\pi - \alpha^*}_0 \hat{\rho}_m'^2 \sin(\theta) d\theta + 2 \int^{\pi - \alpha^*}_0 \hat{\rho}_m^2 \sin(\theta) d\theta \\
		& \leq \sin(\alpha^*) \hat{\rho}_m'(\pi - \alpha^*) \hat{\rho}_m(\pi - \alpha^*) + 2 \int^{\pi - \alpha^*}_0 \hat{\rho}_m^2 \sin(\theta) d\theta \\
		& = \underbrace{\left(\frac{1 - m^2}{\cos(\alpha^*)} - \cos(\alpha^*)\right)}_{< 0} \hat{\rho}_m(\pi - \alpha^*)^2 + 2 \int^{\pi - \alpha^*}_0 \hat{\rho}_m^2 \sin(\theta) d\theta \\
		& \leq 2 \int^{\pi - \alpha^*}_0 \hat{\rho}_m^2 \sin(\theta) d\theta.
\end{align*}
Yet, this leaves us with an upper bound for $m^2$, namely
\begin{align*}
	m^2 \leq 2 \frac{\int^{\pi - \alpha^*}_0\limits \hat{\rho}_m^2 \sin(\theta) d\theta}{\int^{\pi - \alpha^*}_0\limits \frac{1}{\sin(\theta)} \hat{\rho}_m^2 d\theta} \leq 2.
\end{align*}
This shows that the system (\ref{eq:ODEForM})-(\ref{eq:ICForM}) only has to be considered for $m^2 = 1$. This reduces (\ref{eq:ODEForM})-(\ref{eq:ICForM}) to
\begin{align}\label{eq:ODEFor1}
	0 & = \hat{\rho}_1''(\theta) + \cot(\theta) \hat{\rho}_1'(\theta) + \left(2 - \frac{1}{\sin(\theta)^2}\right) \hat{\rho}_1(\theta), \\ \label{eq:BCFor1}
	0 & = -\cos(\alpha^*) \hat{\rho}_1(\pi - \alpha^*) - \sin(\alpha^*) \hat{\rho}_1'(\pi - \alpha^*), \\ \label{eq:ICFor1}
	0 & = \hat{\rho}_1(0).
\end{align}
The general solution of (\ref{eq:ODEFor1}) is given by
\begin{align*}
	\hat{\rho}_1(\theta) = -c_1\sin(\theta) + c_2 \left(-\frac{1}{2} \sin(\theta) \ln\left(\frac{1 + \cos(\theta)}{1 - \cos(\theta)}\right) - \cot(\theta)\right).
\end{align*}
For $\hat{\rho}_1$ to solve (\ref{eq:ICFor1}) we require $c_2 = 0$ and (\ref{eq:BCFor1}) is always satisfied. Hence $\hat{\rho}_1(\theta) = -c_1\sin(\theta)$ is the complete solution of (\ref{eq:ODEFor1})-(\ref{eq:ICFor1}). \\
Now we consider the system (\ref{eq:ODEFor0})-(\ref{eq:ICFor0}). The general solution of (\ref{eq:ODEFor0}) is given by
\begin{align*}
	\hat{\rho}_0(\theta) = \frac{c}{2} + c_1 \cos(\theta) + c_2 \left(\frac{1}{2} \cos(\theta) \ln\left(\frac{1 + \cos(\theta)}{1 - \cos(\theta)}\right) - 1\right).
\end{align*}
If $c_2 \neq 0$ the function would have a singularity in $\theta = 0$, which makes it necessary for (\ref{eq:ICFor0}) that $c_2 = 0$. Therefore we know that so far the solution $\hat{\rho}_0$ is of the form
\begin{align*}
	\hat{\rho}_0(\theta) = \frac{c}{2} + c_1 \cos(\theta).
\end{align*}
The boundary condition (\ref{eq:BCFor0}) is only satisfied for $c = 0$ as one can see from
\begin{align*}
	0 = \sin(\alpha^*) \hat{\rho}_0(\pi - \alpha^*) - \cos(\alpha^*) \hat{\rho}_0'(\pi - \alpha^*) = \sin(\alpha^*) \frac{c}{2}.
\end{align*}
This is the contradiction that we are looking for.
\end{proof}

\begin{rem}\label{rem:CriticalCase}
(i) We continue the considerations from the previous proof one step further: Since $e^{i \phi}$ and $e^{-i \phi}$ can be transformed into $\sin(\phi)$ and $\cos(\phi)$, we end up with the solution
\begin{align*}
	\rho(\phi,\theta) = c_1 \cos(\theta) + c_2 \cos(\phi) \sin(\theta) + c_3 \sin(\phi) \sin(\theta),
\end{align*}
which is exactly what we have obtained in the cases $k = 0$, $k = 1$ and $k \geq 2$ above. \\
(ii) Lemma \ref{lem:CriticalCase} explains why we found for $R^* \sin(\alpha^*)^2 = b \cos(\alpha^*)$ an additional function while considering the case $k = 0$ above. This particular function compensates the missing special solution if $R^* \sin(\alpha^*)^2 = b \cos(\alpha^*)$, so that we always find three linearly independent functions in $\mathcal{N}(A_0)$ if we restrict ourselves to $b > C_{crit}$.
\end{rem}

If $b > C_{crit}$, then
\begin{align}\label{eq:FinalRho}
	\rho(\phi,\theta) = \begin{cases}
								  \begin{aligned}
									  &c_1 (1 + c_\alpha \cos(\theta)) \\
									  &\hspace*{4mm} + c_2 \cos(\phi) \sin(\theta) + c_3 \sin(\phi) \sin(\theta)
								  \end{aligned} & \text{ if } R^* \sin(\alpha^*)^2 \neq b \cos(\alpha^*), \\
								  \begin{aligned}
									  &c_1 \cos(\theta) \\
									  &\hspace*{4mm} + c_2 \cos(\phi) \sin(\theta) + c_3 \sin(\phi) \sin(\theta)
								  \end{aligned} & \text{ if } R^* \sin(\alpha^*)^2 = b \cos(\alpha^*)
							  \end{cases}
\end{align}
is the full solution to the inhomogeneous system (\ref{eq:NullspaceSystem1})-(\ref{eq:NullspaceSystem5}).

Transforming (\ref{eq:FinalRho}) back to the usual $x$-$y$-$z$-coordinates one can see that the last two linearly independent summands that (\ref{eq:FinalRho}) consists of, are the expected shifts in $x$- and $y$-direction. In fact, using (\ref{eq:SphericalCoordinates}) we have
\begin{align*}
	\sin(\phi) = \frac{x}{\sqrt{x^2 + y^2}}, \qquad \cos(\phi) = \frac{y}{\sqrt{x^2 + y^2}} \qquad \text{ and } \qquad
	\sin(\theta) = \frac{1}{R^*} \sqrt{x^2 + y^2},
\end{align*}
which shows
\begin{align}\label{eq:XShift}
	\tilde{\rho}_1(x,y) & = \frac{x}{\sqrt{x^2 + y^2}} \frac{1}{R^*} \sqrt{x^2 + y^2} = \frac{x}{R^*}, \\ \label{eq:YShift}
	\tilde{\rho}_2(x,y) & = \frac{y}{\sqrt{x^2 + y^2}} \frac{1}{R^*} \sqrt{x^2 + y^2} = \frac{y}{R^*}.
\end{align}
The first linearly independent summand in (\ref{eq:FinalRho}) transforms using
\begin{align*}
	\cos(\theta) = \frac{z(x,y) - H^*}{R^*}
\end{align*}
into
\begin{align}\label{eq:ZShiftExp}
	\tilde{\rho}_0(x,y) = \begin{dcases}
									 \frac{R^* - c_\alpha H^*}{R^*} + c_\alpha \frac{z(x,y)}{R^*} & \text{ if } R^* \sin(\alpha^*)^2 \neq b \cos(\alpha^*), \\
									 \frac{z(x,y)}{R^*} - \frac{H^*}{R^*} & \text{ if } R^* \sin(\alpha^*)^2 = b \cos(\alpha^*).
								 \end{dcases}
\end{align}
This is a combination of a radial expansion and a shift in $z$-direction. Defining
\begin{align}\label{eq:NullspaceElements}
	v_i := \begin{pmatrix}
				 \tilde{\rho}_i \\
				 \tilde{\rho}_i|_{\d \Gamma^*}
			 \end{pmatrix} \in V \cap X_1 \qquad \text{ for each } i \in \{0,1,2\}
\end{align}
we have $\mathcal{N}(A_0) = \spn \{v_0, v_1, v_2\}$ and especially $\dim(\mathcal{N}(A_0)) = 3$ whenever $b > C_{crit}$.

Since the $3$-dimensionality of $\mathcal{N}(A_0)$ will play a crucial role in all the considerations to follow, we assume from now on
\begin{align}\label{eq:CriticalAssuption}
	b > C_{crit} = -\frac{1}{3} R^* \sin(\alpha^*)^2 \cos(\alpha^*) = -\frac{H^*}{3} \sin(\alpha^*)^2.
\end{align}

Now that we studied $A_0$ and its nullspace intensively, we still can not start checking the assumptions (a)-(d) from Theorem \ref{thm:GPLStability}. For proving assumption (a) we first have to investigate the solvability of
\begin{align}\label{eq:EllipticPDE1}
	-\Delta_{\Gamma^*} v^{(1)} - |\sigma^*|^2 v^{(1)} + \mint_{\Gamma^*} \Delta_{\Gamma^*} v^{(1)} + |\sigma^*|^2 v^{(1)} \dH^2 & = f^{(1)} & &\text{in } \Gamma^*, \\ \label{eq:EllipticPDE2}
	\sin(\alpha^*)^2 (n_{\d \Gamma^*} \cdot \nabla_{\Gamma^*} v^{(1)}) + \frac{\sin(\alpha^*) \cos(\alpha^*)}{R^*} v^{(1)} & & & \notag \\
	- b \sin(\alpha^*) v^{(2)}_{\sigma\sigma} - \frac{b}{{R^*}^2 \sin(\alpha^*)} v^{(2)} & = f^{(2)} & &\text{on } \d \Gamma^*
\end{align}
for a right-hand side $f = (f^{(1)}, f^{(2)})$.

First we will need the notion of a weak solution and later use semigroup arguments to show higher regularity of these solutions.

\begin{defi}[Weak solution]
We call
\begin{align*}
	u = (u^{(1)},u^{(2)}) \in H := \left\{W^1_2(\Gamma^*) \times W^1_2(\d \Gamma^*) \left| \, u^{(1)}|_{\d \Gamma^*} = u^{(2)}, \, \mint_{\Gamma^*} u^{(1)} \dH^2 = 0\right.\right\}
\end{align*}
a weak solution of (\ref{eq:EllipticPDE1})-(\ref{eq:EllipticPDE2}) for $f = (f^{(1)}, f^{(2)}) \in \tilde{L_2}$ with
\begin{align*}
	\tilde{L_2} & := \left\{f \in L_2(\Gamma^*) \times L_2(\d \Gamma^*) \left| \, \mint_{\Gamma^*} f^{(1)} \dH^2 = 0\right.\right\}, \\
	\skp{f}{g}_{\tilde{L_2}} & := \int_{\Gamma^*} f^{(1)} g^{(1)} \dH^2 + \int_{\d \Gamma^*} \frac{1}{\sin(\alpha^*)^2} f^{(2)} g^{(2)} \dH^1
\end{align*}
if we have
\begin{align}\label{eq:WeakSolution}
	&\int_{\Gamma^*} \nabla_{\Gamma^*} u^{(1)} \cdot \nabla_{\Gamma^*} v^{(1)} \dH^2 - \int_{\Gamma^*} |\sigma^*|^2 u^{(1)} v^{(1)} \dH^2 \notag \\
	&+ \int_{\d \Gamma^*} \frac{b}{\sin(\alpha^*)} u^{(2)}_\sigma v^{(2)}_\sigma \dH^1 + \int_{\d \Gamma^*} \left(\frac{\cot(\alpha^*)}{R^*} - \frac{b}{R^* \sin(\alpha^*)^3}\right) u^{(2)} v^{(2)} \dH^1 \notag \\
	& \qquad = \int_{\Gamma^*} f^{(1)} v^{(1)} \dH^2 + \int_{\d \Gamma^*} \frac{1}{\sin(\alpha^*)^2} f^{(2)} v^{(2)} \dH^1
\end{align}
for all $v \in H$.
\end{defi}

This definition is motivated by the fact that a solution $u \in C^2$ of (\ref{eq:EllipticPDE1})-(\ref{eq:EllipticPDE2}) satisfies (\ref{eq:WeakSolution}). For using the Lemma of Lax-Milgram we define the bilinear form $B: H \times H \longrightarrow \R$ and the functional $F: H \longrightarrow \R$ by
\begin{align*}
	B(u,v) & := \int_{\Gamma^*} \nabla_{\Gamma^*} u^{(1)} \cdot \nabla_{\Gamma^*} v^{(1)} \dH^2 - \int_{\Gamma^*} |\sigma^*|^2 u^{(1)} v^{(1)} \dH^2 \\
			 & + \int_{\d \Gamma^*} \frac{b}{\sin(\alpha^*)} u^{(2)}_\sigma v^{(2)}_\sigma \dH^1 + \int_{\d \Gamma^*} \left(\frac{\cot(\alpha^*)}{R^*} - \frac{b}{{R^*}^2 \sin(\alpha^*)^3}\right) u^{(2)} v^{(2)} \dH^1, \\
	F(v) & := \skp{f}{v}_{\tilde{L_2}}.
\end{align*}
$B$ and $F$ are bounded, which we can see by straight forward
estimates and usage of H\"older's inequality. Moreover, we have the
energy identity
\begin{align*}
	B(u,u) & = \norm{\nabla_{\Gamma^*} u^{(1)}}_{L_2(\Gamma^*)}^2 - \frac{2}{{R^*}^2} \norm{u^{(1)}}_{L_2(\Gamma^*)}^2 \\
			 & + \frac{b}{\sin(\alpha^*)} \norm{u^{(2)}_\sigma}_{L_2(\d \Gamma^*)}^2 + \underbrace{\left(\frac{\cot(\alpha^*)}{{R^*}^2} - \frac{b}{{R^*}^2 \sin(\alpha^*)^3}\right)}_{=: \hat{c}} \norm{u^{(2)}}_{L_2(\d \Gamma^*)}^2.
\end{align*}
If $\hat{c} \geq 0$, we can drop the last summand to obtain
\begin{align*}
	B(u,u) + c_7 \norm{u^{(1)}}_{L_2(\Gamma^*)}^2 \geq \norm{\nabla_{\Gamma^*} u^{(1)}}_{L_2(\Gamma^*)}^2 + c_8 \norm{u^{(2)}_\sigma}_{L_2(\d \Gamma^*)}^2
\end{align*}
and thus we see
\begin{align*}
	B(u,u) + C \norm{u}_{\tilde{L_2}}^2 \geq c_9 \left(\norm{u^{(1)}}_{W^1_2(\Gamma^*)}^2 + \norm{u^{(2)}}_{W^1_2(\d \Gamma^*)}^2\right) \geq c \norm{u}_H^2
\end{align*}
for some $C, c > 0$. Should $\hat{c} < 0$ hold, then we can absorb this last summand into $\norm{u}_{\tilde{L_2}}^2$ on the left-hand side and still arrive at the inequality $B(u,u) + C \norm{u}_{\tilde{L_2}}^2 \geq c \norm{u}_H^2$. \\
This shows that for $\mu \geq C$ the modified bilinear form
\begin{align*}
	B_\mu: H \times H \longrightarrow \R: (u,v) \longmapsto B_\mu(u,v) := B(u,v) + \mu \skp{u}{v}_{\tilde{L_2}}
\end{align*}
satisfies all the assumptions that are necessary to use the Lemma of Lax-Milgram. Therefore we know that for each $f \in \tilde{L_2}$ there exists a unique weak solution $u \in H$ of the modified equation
\begin{align}\label{eq:EllipticPDEMu1}
	-\Delta_{\Gamma^*} v^{(1)} - |\sigma^*|^2 v^{(1)} & & & \notag \\
	+ \mint_{\Gamma^*} \Delta_{\Gamma^*} v^{(1)} + |\sigma^*|^2 v^{(1)} \dH^2 + \mu v^{(1)} =: (L_\mu u)^{(1)} & = f^{(1)} & &\text{in } \Gamma^*, \\ \label{eq:EllipticPDEMu2}
	\sin(\alpha^*)^2 (n_{\d \Gamma^*} \cdot \nabla_{\Gamma^*} v^{(1)}) + \frac{\sin(\alpha^*) \cos(\alpha^*)}{R^*} v^{(1)} & & & \notag \\
	- b \sin(\alpha^*) v^{(2)}_{\sigma\sigma} - \frac{b}{{R^*}^2 \sin(\alpha^*)} v^{(2)} + \mu v^{(2)} =: (L_\mu u)^{(2)} & = f^{(2)} & &\text{on } \d \Gamma^*.
\end{align}
This unique solution $u$ shall be denoted by $u = L_\mu^{-1} f$. A weak solution $u \in H$ of the original problem (\ref{eq:EllipticPDE1})-(\ref{eq:EllipticPDE2}) for a right-hand side $f \in \tilde{L_2}$ is equivalent to a weak solution of (\ref{eq:EllipticPDEMu1})-(\ref{eq:EllipticPDEMu2}) with a right-hand side $\mu u + f$, i.e. a $u \in H$ satisfying
\begin{align*}
	B_\mu(u,v) = \skp{\mu u + f}{v}_{\tilde{L_2}} \qquad \forall \, v \in H.
\end{align*}
Using the weak solvability we obtain $u = L_\mu^{-1}(\mu u + f)$, which can be transformed into $(\id - K)u = g$ with $g := L_\mu^{-1} f$ and $K := \mu L_\mu^{-1}$. Note that $K: \tilde{L_2} \rightarrow H$ is bounded due to
\begin{align*}
	c \norm{u}_H^2 \leq B_\mu(u,u) = \skp{g}{u}_{\tilde{L_2}} \leq \norm{g}_{\tilde{L_2}} \norm{u}_{\tilde{L_2}} \leq \norm{g}_{\tilde{L_2}} \norm{u}_H,
\end{align*}
which shows
\begin{align*}
	c \norm{K g}_H = c \mu \norm{L_\mu^{-1} g}_H = c \mu \norm{u}_H \leq \mu \norm{g}_{\tilde{L_2}}.
\end{align*}
Regarding $K$ as an operator $K: \tilde{L_2} \longrightarrow H \hookrightarrow \tilde{L_2}$ it is compact as a composition of a bounded operator and the compact embedding $H \hookrightarrow \tilde{L_2}$. Fredholm theory gives that $u - Ku = g$ has a solution if and only if $\skp{g}{v}_{\tilde{L_2}} = 0$ for all $v \in H$ with $v - K^* v = 0$. This condition can be rewritten as $\skp{f}{v}_{\tilde{L_2}} = 0$ for all $v \in H$ with $v = K^* v$, because of
\begin{align*}
	0 = \skp{g}{v}_{\tilde{L_2}} = \skp{L_\mu^{-1} f}{v}_{\tilde{L_2}} = \frac{1}{\mu} \skp{K f}{v}_{\tilde{L_2}} = \frac{1}{\mu} \skp{f}{K^* v}_{\tilde{L_2}} = \frac{1}{\mu} \skp{f}{v}_{\tilde{L_2}}.
\end{align*}
The condition $v - K^*v = 0$, however, is equivalent to $B(v,u) = 0$ for all $u \in H$ due to the symmetry of $B$ on $H$. Note that $B(u,v) = 0$ for all $u \in H$ is the same as finding solutions of
\begin{align*}
	-\Delta_{\Gamma^*} v^{(1)} - |\sigma^*|^2 v^{(1)} &= const. & &\text{in } \Gamma^*, \\
	\sin(\alpha^*)^2 (n_{\d \Gamma^*} \cdot \nabla_{\Gamma^*} v^{(1)}) + \frac{\sin(\alpha^*) \cos(\alpha^*)}{R^*} v^{(1)} & & & \\
	- b \sin(\alpha^*) v^{(2)}_{\sigma\sigma} - \frac{b}{{R^*}^2 \sin(\alpha^*)} v^{(2)} &= 0 & &\text{on } \d \Gamma^*, \\
	\int_{\Gamma^*} v^{(1)} \dH^2 &= 0, & &
\end{align*}
which we already did as we determined $\mathcal{N}(A_0)$ and found these equations to be satisfied exactly for $v_1$ and $v_2$ from (\ref{eq:NullspaceElements}). The nullspace element $v_0$ is omitted, since its first component is not mean value free as required for $H$. Summing up we proved (\ref{eq:EllipticPDE1})-(\ref{eq:EllipticPDE2}) has a weak solution $u \in H$ if and only if $f \in \tilde{L_2}$ satisfies $\skp{f}{v_1}_{\tilde{L_2}} = \skp{f}{v_2}_{\tilde{L_2}} = 0$. \\
The next step is to show that the weak solution is actually a strong solution. Let $f \in X_0$ such that $\int_{\Gamma^*}\limits f^{(1)} \dH^2 = 0$ and $\skp{f}{v_1}_{\tilde{L_2}} = \skp{f}{v_2}_{\tilde{L_2}} = 0$. Then we know by Theorem \ref{thm:Semigroup2} that $-A_0$ generates an analytic semigroup and hence there exists some $\mu_0 > 0$ such that $\mu_0 u + A_0 u = f$ has a unique solution $u \in X_1$. The weak solution $u_w \in H$ of $A_0 u_w = f$ also solves $\mu_0 u_w + A_0 u_w = \mu_0 u_w + f =: \hat{f}$ weakly. We see that {$\hat{f} \in X_0$ if $3<p\leq 4$} due to {$u_w \in H \subseteq X_0$ in this case} and the choice of $f$. Thus we obtain another $u_s \in X_1$, which also solves $\mu_0 u_s + A_0 u_s = \hat{f}$. {In the case $p>4$ we obtain the same conclusion by using the previous argument for some $3<\tilde{p}\leq 4$ and using bootstraping once. In both cases we obtain that,} since this $u_s$ is also a weak solution and hence is unique, it has to coincide with $u_w$. Thus the solution $u_w$ of $A_0 u_w = f$ is not only in $H$, but even an element of $X_1 \cap H$. So far we have seen that (\ref{eq:EllipticPDE1})-(\ref{eq:EllipticPDE2}) has a solution $u \in X_1$ with $\int_{\Gamma^*}\limits u^{(1)} \dH^2 = 0$ for all $f \in X_0$ that satisfies $\int_{\Gamma^*}\limits f^{(1)} \dH^2 = 0$ and $\skp{f}{v_1}_{\tilde{L_2}} = \skp{f}{v_2}_{\tilde{L_2}} = 0$.

These considerations regarding the nullspace and the solvability of (\ref{eq:EllipticPDE1})-(\ref{eq:EllipticPDE2}) put us into the position of finally start proving the assumptions (a)-(d) from Theorem \ref{thm:GPLStability}. 

We turn our attention to assumption (a) and prove it in the upcoming lemma.

\begin{lemma}\label{lem:Manifold}
Near $v^* \equiv 0$ the set of equilibria $\mathcal{E}$ of (\ref{eq:Flow1})-(\ref{eq:Flow2}) is a $C^1$-manifold in $X_1$.
\end{lemma}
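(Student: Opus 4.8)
The plan is to verify assumption (a) by a Lyapunov--Schmidt reduction based on the Fredholm property of $A_0$, and then to use the explicit three-parameter family of stationary spherical caps to show that the reduced finite-dimensional equation is identically zero, so that $\mathcal{E}$ is locally a $C^1$-graph over $\mathcal{N}(A_0)$. First I would reformulate the equilibrium condition as $\mathcal{G}(v):=A(v)v-F(v)=0$. By the construction of $A$ and $F$ recalled in Section~\ref{sec:GPLS} (the analogues of Lemmas 3.15--3.18 of \cite{Mue13}), $\mathcal{G}$ is of class $C^1$ from a neighbourhood of $0$ in $X_1$ into $X_0$, with $\mathcal{G}(0)=0$ and $\mathcal{G}'(0)=A_0$. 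Since $-A_0$ generates an analytic semigroup (Theorem~\ref{thm:Semigroup2}) and $X_1\hookrightarrow X_0$ compactly, $A_0$ has compact resolvent; hence $A_0\colon X_1\to X_0$ is Fredholm of index $0$, so $\mathrm{codim}\,\mathcal{R}(A_0)=\dim\mathcal{N}(A_0)=3$ and $\mathcal{R}(A_0)$ is closed. By the solvability analysis of (\ref{eq:EllipticPDE1})--(\ref{eq:EllipticPDE2}) just carried out, $\mathcal{R}(A_0)=\{f\in X_0:\int_{\Gamma^*}f^{(1)}\dH^2=0,\ \skp{f}{v_1}_{\tilde{L_2}}=\skp{f}{v_2}_{\tilde{L_2}}=0\}$, while $\mathcal{N}(A_0)=N:=\spn\{v_0,v_1,v_2\}$; a short computation using the parity of the $v_i$ under the reflection symmetries of $\Gamma^*$ gives $N\cap\mathcal{R}(A_0)=\{0\}$, hence $X_0=N\oplus\mathcal{R}(A_0)$. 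Since $N\subseteq X_1$ one also has $X_1=N\oplus(X_1\cap\mathcal{R}(A_0))$, and $A_0$ restricts to an isomorphism $X_1\cap\mathcal{R}(A_0)\to\mathcal{R}(A_0)$.

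Next I would run the Lyapunov--Schmidt reduction. Let $P\colon X_0\to\mathcal{R}(A_0)$ be the projection with kernel $N$ and, for $v$ near $0$, write $v=\eta+w$ with $\eta:=(I-P)v\in N$ and $w:=Pv\in X_1\cap\mathcal{R}(A_0)$. Then $\mathcal{G}(v)=0$ is equivalent to the pair $P\mathcal{G}(\eta+w)=0$ and $(I-P)\mathcal{G}(\eta+w)=0$. The $w$-derivative of $(\eta,w)\mapsto P\mathcal{G}(\eta+w)$ at $(0,0)$ is $A_0|_{X_1\cap\mathcal{R}(A_0)}$, an isomorphism onto $\mathcal{R}(A_0)$, so by the implicit function theorem there are a ball $B\subseteq N$ about $0$ and a $C^1$ map $\phi\colon B\to X_1\cap\mathcal{R}(A_0)$ with $\phi(0)=0$, $\phi'(0)=0$ and $P\mathcal{G}(\eta+\phi(\eta))=0$ on $B$, such that every equilibrium sufficiently close to $0$ equals $\eta+\phi(\eta)$ for a unique $\eta\in B$. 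Hence, near $0$, $\mathcal{E}=\{\eta+\phi(\eta):\eta\in B,\ g(\eta)=0\}$, where $g(\eta):=(I-P)\mathcal{G}(\eta+\phi(\eta))$ takes values in the three-dimensional space $N$ and satisfies $g(0)=0$, $g'(0)=0$.

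The geometry enters in showing $g\equiv0$ near $0$. The stationary spherical caps with vertical axis through $(x_0,y_0)$ and contact-circle radius $r\in I_r$ (with $\alpha,R,H$ then determined as in Section~\ref{sec:SphericalCaps}) form a three-parameter family; described as graphs over $\Gamma^*$ through the curvilinear coordinates $\Psi$ of (\ref{eq:CurvilinearCoordinates}) this yields a $C^1$ map $s\colon U\subseteq\R^3\to\mathcal{E}\cap X_1$ with $s(0)=0$, whose differential $s'(0)$ is injective with range $\spn\{v_0,v_1,v_2\}=N$ --- which is exactly what the computation leading to (\ref{eq:XShift})--(\ref{eq:ZShiftExp}) and (\ref{eq:NullspaceElements}) shows, the horizontal translations producing $v_1,v_2$ and the change of $r$ the radial expansion / $z$-shift $v_0$. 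Consequently $(I-P)\circ s$ has differential $s'(0)\colon\R^3\to N$ at $0$, an isomorphism, so it is a local $C^1$-diffeomorphism and its image contains a neighbourhood $B'\subseteq N$ of $0$. Given $\eta\in B'$, pick $t$ with $(I-P)s(t)=\eta$; since $s(t)\in\mathcal{E}$ is close to $0$, uniqueness in the reduction forces $s(t)=\eta+\phi(\eta)$, whence $g(\eta)=(I-P)\mathcal{G}(s(t))=0$. Thus $g$ vanishes on $B'$, so near $0$ we have $\mathcal{E}=\{\eta+\phi(\eta):\eta\in B'\}$, and $\eta\mapsto\eta+\phi(\eta)$ is a $C^1$-chart with inverse $I-P$; hence $\mathcal{E}$ is a three-dimensional $C^1$-manifold in $X_1$ near $v^*\equiv0$.

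The main obstacle is not a single estimate --- the real work was already done in the nullspace computation and the elliptic solvability result. The points requiring care are: (i) that $\mathcal{G}$ is genuinely $C^1$ as a map into $X_0$ on a neighbourhood in $X_1$, which relies on the quasilinear structure quoted from \cite{Mue13}; and (ii) matching the abstract reduction with the concrete family, namely that $X_0=N\oplus\mathcal{R}(A_0)$ (semi-simplicity of the eigenvalue $0$, read off from the description of $\mathcal{R}(A_0)$) and that $s'(0)$ maps onto $N$, which together are what make the reduced map $g$ vanish identically and thereby identify $\mathcal{E}$ locally with the spherical-cap family.
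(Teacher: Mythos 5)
Your proof is correct and uses essentially the same ingredients as the paper's: the characterization of $\mathcal{N}(A_0)=\spn\{v_0,v_1,v_2\}$, the Fredholm solvability of (\ref{eq:EllipticPDE1})--(\ref{eq:EllipticPDE2}), the semi-simplicity $X_0=\mathcal{N}(A_0)\oplus\mathcal{R}(A_0)$, and the explicit three-parameter family of stationary spherical caps. The packaging, however, is genuinely different. The paper traps $\mathcal{E}$ between two explicitly constructed three-dimensional $C^1$-manifolds: a superset $\hat{\mathcal{E}}=\{PF=0\}$ (obtained via the implicit function theorem after projecting away $\spn\{v_1,v_2\}$, with domain split as $\R^3\times \modulo{X_1}{\mathcal{N}(A_0)}$), and a subset $\tilde{\mathcal{E}}$ consisting of the SSC graphs (parametrized via the implicit equation (\ref{eq:ImplicitU})); the conclusion is then forced by equality of dimensions. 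You instead carry out an explicit Lyapunov--Schmidt reduction: solve $P\mathcal{G}(\eta+w)=0$ for $w=\phi(\eta)$, form the reduced map $g=(I-P)\mathcal{G}(\cdot+\phi(\cdot))$, and show $g\equiv 0$ near $0$ because the SSC family $s(t)$ sweeps out a neighbourhood of $0$ in $\mathcal{N}(A_0)$ under $I-P$ and uniqueness in the reduction forces $s(t)=\eta+\phi(\eta)$. Your route produces a $C^1$-chart for $\mathcal{E}$ directly (namely $\eta\mapsto\eta+\phi(\eta)$) rather than relying on the sandwich argument, which some readers may find cleaner; the paper's version avoids naming the bifurcation function and argues entirely by dimension count. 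Both are valid.

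Two minor points. First, your invocation of ``a short computation using parity'' to conclude $N\cap\mathcal{R}(A_0)=\{0\}$ silently assumes $\int_{\Gamma^*}v_0^{(1)}\dH^2\neq 0$; this is also implicit in the paper when it defines the projection $P$ with coefficient $a_0(v)=\int_{\Gamma^*}v^{(1)}\dH^2/\int_{\Gamma^*}v_0^{(1)}\dH^2$, but it is worth flagging since the parity argument handles $v_1,v_2$ cleanly while $v_0$ needs this non-vanishing. Second, both your proof and the paper's implicitly rely on the semi-simplicity of the eigenvalue $0$, which the paper only establishes in full as part of assumption (c) after Lemma~\ref{lem:Manifold}; the paper acknowledges this forward reference (``follows from the upcoming Lemma~\ref{lem:ProjectionNullspace}''), and your proof inherits the same logical ordering, which is acceptable but should be stated.
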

\begin{proof}
We will enclose the set of equilibria $\mathcal{E}$ between a smaller set $\tilde{\mathcal{E}}$ and a bigger set $\hat{\mathcal{E}}$ that are $3$-dimensional $C^1$-manifolds and hence $\mathcal{E}$ is a $3$-dimensional $C^1$-manifold as well. The arguments will rely on Theorem 4.B in \cite{Zei85}. To this end define
\begin{align*}
	X := \R^3, \qquad Y := \modulo{X_1}{\mathcal{N}(A_0)} \qquad \text{ and } \qquad Z := \left\{v \in X_0 \left| \int_{\Gamma^*} v^{(1)} \dH^2 = 0\right.\right\}.
\end{align*}
Then $X$ and $Z$ are Banach spaces and $Y$ as well, since $\mathcal{N}(A_0)$ is finite dimensional and hence closed. We consider the function
\begin{align*}
	F: X \times Y \longrightarrow Z: (t_0,t_1,t_2,w) \longmapsto \begin{pmatrix}
																						 H_\Gamma(v^{(1)}) - \bar{H}(v^{(1)}) \\
																						 a + b \kappa_{\d D}(v^{(2)}) + \skp{n_\Gamma(v^{(1)})}{n_D}
																					 \end{pmatrix},
\end{align*}
where $v = (v^{(1)}, v^{(2)})^T$ shall be given by $v := t_0 v_0 + t_1 v_1 + t_2 v_2 + w$ with $w \in \mathcal{N}(A_0)^\bot$. Then the set of equilibria as given in (\ref{eq:Equilibria}) can be written as $\mathcal{E} = \{v \in V \cap X_1 \mid F(v) = 0\}$. We use the orthogonal projection $P: X_0 \longrightarrow \spn \{v_1, v_2\}^\perp$, where the orthogonal complement has to be understood with respect to the $\tilde{L_2}$-inner product, to define
\begin{align*}
	\hat{\mathcal{E}} := \{v \in V \cap X_1 \mid PF(v) = 0\}.
\end{align*}
Then trivially $\mathcal{E} \subseteq \hat{\mathcal{E}}$ and $PF$ maps as follows
\begin{align*}
	PF: X \times Y \longrightarrow \spn \{v_1, v_2\}^\perp \cap Z \subseteq X_0: (t_0,t_1,t_2,w) \longmapsto PF(v)
\end{align*}
for $v = t_0 v_0 + t_1 v_1 + t_2 v_2 + w$. In $\mathbb{O} := (0,0,0,0) \in X \times Y$ the partial derivative of $F$ with respect to $w$, which corresponds to the linearization operator $-A_0$, is given by (\ref{eq:InteriorForSSC}), (\ref{eq:BCForSSC})-(\ref{eq:SSCQuantities2}) as
\begin{align*}
	\left(D_wF(\mathbb{O})(v)\right)^{(1)} = \Delta_{\Gamma^*} v^{(1)} + |\sigma^*|^2 v^{(1)} - \mint_{\Gamma^*}\limits \Delta_{\Gamma^*} v^{(1)} + |\sigma^*|^2 v^{(1)} \dH^2
\end{align*}
and
\begin{align*}
	\left(D_wF(\mathbb{O})(v)\right)^{(2)}
		& = -\sin(\alpha^*)^2 (n_{\d \Gamma^*} \cdot \nabla_{\Gamma^*} v^{(1)}) - \frac{\sin(\alpha^*) \cos(\alpha^*)}{R^*} v^{(1)} \\
		& + b \sin(\alpha^*) v^{(2)}_{\sigma\sigma} + \frac{b}{{R^*}^2 \sin(\alpha^*)} v^{(2)}.
\end{align*}
Now we will show that
\begin{align*}
	D_w(PF)(\mathbb{O}): \modulo{X_1}{\mathcal{N}(A_0)} \longrightarrow \spn \{v_1, v_2\}^\perp \cap Z: w \longmapsto D_w(PF)(w)
\end{align*}
is bijective. First remark that $D_w(PF) = PD_wF$ since $P$ is linear. The injectivity can be seen from
\begin{align*}
	D_w(PF)(\mathbb{O}) w = 0 \quad &\Leftrightarrow \quad P(D_wF(\mathbb{O})w) = 0 & &\Leftrightarrow \quad PA_0w = 0 \\
		&\Leftrightarrow \quad A_0w \in \mathcal{N}(A_0) & &\Leftrightarrow \quad w \in \mathcal{N}(A_0^2) = \mathcal{N}(A_0) \\
		&\Leftrightarrow \quad w = 0 \in \modulo{X_1}{\mathcal{N}(A_0)},
\end{align*}
where the fact $\mathcal{N}(A_0) = \mathcal{N}(A_0^2)$ follows from the upcoming Lemma \ref{lem:ProjectionNullspace} and the considerations that follow in the proof of assumption (c). The surjectivity follows from the solvability of (\ref{eq:EllipticPDE1})-(\ref{eq:EllipticPDE2}) from above. Let $f \in \spn \{v_1, v_2\}^\perp \cap Z$. Then $\skp{f}{v_1}_{\tilde{L_2}} = \skp{f}{v_2}_{\tilde{L_2}} = 0$ and we know that there is a solution $u \in X_1$ with $\int_{\Gamma^*}\limits u^{(1)} \dH^2 = 0$ of $D_wF(\mathbb{O}) u = f$ and $P((D_wF)(\mathbb{O}) u) = P(f) = f$. Clearly this $u$ is in $\modulo{X_1}{\mathcal{N}(A_0)}$, since contributions of $v_0$, $v_1$ and $v_2$ do not affect $D_wF(\mathbb{O}) u = -A_0 u = f$. Moreover, $PF(\mathbb{O}) = P(0) = 0$ because $v^* \equiv 0$ corresponds to an SSC. By the same calculations as in Lemma 3.18 of \cite{Mue13} we see that $F$ and $F_w$ are continuous in a small neighborhood $U(\mathbb{O}) \subseteq X \times Y$ of $\mathbb{O}$ and so are $PF$ and $P F_w$. Therefore
\begin{align*}
	PF: U(\mathbb{O}) \subseteq X \times Y \longrightarrow \spn \{v_1, v_2\}^\perp \cap Z
\end{align*}
satisfies all assumptions of Theorem 4.B in \cite{Zei85}. So we see that there exist $r_0, r > 0$ such that for every $t \in \R^3$ with $\norm{t} \leq r_0$ there is exactly one $w(t) \in Y$ for which $\norm{w(t)}_Y \leq r$ and $PF(t,w(t)) = 0$. Hence
\begin{align*}
	\Psi: B_{r_0}(0) \subseteq \R^3 \longrightarrow \mathcal{E}: t = (t_0, t_1, t_2) \longmapsto \Psi(t) := t_0 v_0 + t_1 v_1 + t_2 v_2 + w(t)
\end{align*}
is the desired parametrization of $\hat{\mathcal{E}}$ in a neighborhood of $v^* \equiv 0$. Due to the fact that
\begin{align*}
	D\Psi(0) = \left(v_0 + (\d_{t_0} w)(0), v_1 + (\d_{t_1} w)(0), v_2 + (\d_{t_2} w)(0)\right)
\end{align*}
has full rank, because $v_0$, $v_1$ and $v_2$ are linearly independent and $w(t)$ belongs to $Y$, which is complementary to $\mathcal{N}(A_0)$, we see that $\hat{\mathcal{E}}$ is a $C^1$-manifold with $\dim(\hat{\mathcal{E}}) = 3$. \\
Next we try to find a $3$-dimensional manifold $\tilde{\mathcal{E}}$ that is contained in $\mathcal{E}$. We define
\begin{align*}
	\tilde{\mathcal{E}} := \left\{u \in V \cap X_1 \mid u \text{ parametrizes an SSC}\right\}.
\end{align*}
Then $\tilde{\mathcal{E}} \subseteq \mathcal{E}$ is obvious since for SSCs $F(u) = 0$ holds. For $|x|$, $|y|$, $|H - H^*|$ and $|R - R^*|$ small enough any $u \in \tilde{\mathcal{E}}$ is given implicitly as the solution of
\begin{align}\label{eq:ImplicitU}
	\norm{\Psi(q,u(q)) - \begin{pmatrix} x \\ y \\ H \end{pmatrix}}^2 = R^2 \qquad \forall \, q \in \Gamma^*,
\end{align}
where $\Psi$ is the curvilinear coordinate system as introduced in (\ref{eq:CurvilinearCoordinates}). And since this SC is also stationary, $u$ has to satisfy (\ref{eq:StationaryAngle}). The term $\cos(\alpha)$ can be replaced by $\frac{H}{R}$ and $r$ can be replaced by $r = \sqrt{R^2 - H^2}$ and so we obtain
\begin{align*}
	\frac{H}{R} = \frac{b}{\sqrt{R^2 - H^2}} - a,
\end{align*}
which is an equation that specifies the relation between $R$ and $H$. Therefore there is some way of expressing $H$ in terms of $R$ via a $C^1$-function $H(R)$ and this reduces the degrees of freedom in (\ref{eq:ImplicitU}) to three. It is again useful to write the curvilinear coordinate system in spherical coordinates. For $q = P(\phi,\theta)$ as in (\ref{eq:SphericalCoordinates}) we use the tangential correction terms $T(q)$ and $t(q,w)$ defined by
\begin{align*}
	T(q) := \begin{pmatrix}
				  \sin(\phi) \cos(\theta) \\
				  \cos(\phi) \cos(\theta) \\
				  -\sin(\theta)
			  \end{pmatrix} \qquad \text{ and } \qquad t(q,w) := -w \eta(\theta) \cot(\alpha^*),
\end{align*}
where $\eta: [0, \pi - \alpha^*] \longrightarrow \R: \theta \longmapsto \eta(\theta)$ is a smooth function that satisfies $|\eta(\theta)| \in [0,1]$ and $\eta(\pi - \alpha^*) = 1$. An easy computation shows that these choices guarantee $\Psi(q,w)|_{\d \Gamma^*} \in \d \Omega$ as required. Moreover, we see
\begin{align*}
	\d_w \Psi(q,0) = n_{\Gamma^*}(q) + t_w(q,0) T(q) = \begin{pmatrix}
																			\sin(\phi) \sin(\theta) \\
																			\cos(\phi) \sin(\theta) \\
																			\cos(\theta)
																		\end{pmatrix} + \eta(\theta) \cot(\alpha^*) \begin{pmatrix}
																																	  \sin(\phi) \cos(\theta) \\
																																	  \cos(\phi) \cos(\theta) \\
																																	  -\sin(\theta)
																																  \end{pmatrix}.
\end{align*}
Calculating the derivative of (\ref{eq:ImplicitU}) in $u \equiv 0$, which corresponds to the parameters $(0,0,R^*) \in \R^3$, we get
\begin{align*}
	&\left.2 \left(\Psi(q,u) - \begin{pmatrix} x \\ y \\ H(R) \end{pmatrix}\right) \cdot \d_w \Psi(q,u)\right|_{u \equiv 0}
						= 2 \left(\Psi(q,0) - \begin{pmatrix} 0 \\ 0 \\ H^* \end{pmatrix}\right) \cdot \d_w \Psi(q,0) \\
	&\hspace*{5mm} = 2 \left(q - \begin{pmatrix} 0 \\ 0 \\ H^* \end{pmatrix}\right)
			 \cdot \left(\begin{pmatrix}
								 \sin(\phi) \sin(\theta) \\
								 \cos(\phi) \sin(\theta) \\
								 \cos(\theta)
							 \end{pmatrix} + \eta(\theta) \cot(\alpha^*) \begin{pmatrix}
																							\sin(\phi) \cos(\theta) \\
																							\cos(\phi) \cos(\theta) \\
																							-\sin(\theta)
																						\end{pmatrix}\right) = 2R^* \neq 0.
\end{align*}
By the implicit function theorem and the fact that all the terms appearing in (\ref{eq:ImplicitU}) are smooth, there exists a three parameter family of $C^1$-functions $u(x,y,R)$ that parametrizes the SSCs in a neighbourhood of $\Gamma^*$. For $|x|$, $|y|$ and $|R - R^*|$ sufficiently small all these functions lie inside $\tilde{\mathcal{E}}$. Hence we found a parametrization
\begin{align*}
	\Phi: (-\epsilon_1, \epsilon_1) \times (-\epsilon_2, \epsilon_2) \times (R^* - \epsilon_3, R^* + \epsilon_3) \subseteq \R^3 \longrightarrow \tilde{\mathcal{E}}: (x,y,R) \longmapsto u(x,y,R)
\end{align*}
for sufficiently small $\epsilon_i > 0$ with $i \in \{1,2,3\}$, provided that $D\Psi(0,0,R^*)$ is not degenerated. The fact that $F(u(x,y,R)) = 0$ leads by differentiation to
\begin{align*}
	0 = D_uF(u(0,0,R^*)) u_x(0,0,R^*) = D_uF(0) u_x(0,0,R^*) = -A_0 u_x(0,0,R^*),
\end{align*}
which proves $u_x(0,0,R^*) \in \mathcal{N}(A_0)$. Similar we show $u_y(0,0,R^*), u_R(0,0,R^*) \in \mathcal{N}(A_0)$. This suggests that $u_x(0,0,R^*)$, $u_y(0,0,R^*)$ and $u_R(0,0,R^*)$ coincide with the functions $v_1$, $v_2$ and $v_0$ from (\ref{eq:NullspaceElements}). In fact, this can be calculated by differentiating
\begin{align*}
	\norm{\Psi(q,u(x,y,R)) - \begin{pmatrix} x \\ y \\ H(R) \end{pmatrix}}^2 - R^2 = 0
\end{align*}
with respect to $x$, $y$ and $R$ and evaluate it in $(0,0,R^*)$. We observe $u_x(0,0,R^*) = \sin(\phi) \sin(\theta)$, $u_y(0,0,R^*) = \cos(\phi) \sin(\theta)$ and $u_R(0,0,R^*) = H'(R^*) \cos(\theta) + R^*$. These functions are known to be linearly independent and therefore the rank of $D\Psi(0,0,R^*)$ is three. Hence $D\Psi(0,0,R^*)$ is non-degenerated and thus the proof of assumption (a) of the GLPS (see Theorem \ref{thm:GPLStability}) is complete.
\end{proof}

\begin{rem}
Actually we even proved a little more than assumption (a). We know by (\ref{eq:XShift})-(\ref{eq:ZShiftExp}) that there are three ways to transform the SSC $\Gamma^*$ into another SSC - namely an $x$-shift, a $y$-shift and a radial expansion with a simultaneous shift in $z$-direction. Knowing $\dim(\mathcal{E}) = 3$ we see that in a small neighborhood of $v^* \equiv 0$ the manifold of equilibria only consists of SSCs. And since we started with an arbitrary SSC $\Gamma^*$, we obtain the following result: ``Around an SSC the set $\mathcal{E}$ only consists of SSCs''. Unfortunately, this does not mean that SSCs are the only equilibria of (\ref{eq:Flow1})-(\ref{eq:Flow2}). Possibly there could be equilibria that are no SSCs, which are isolated or even form a manifold itself.
\end{rem}

Assumption (b) is an easy comparison of dimensions. We can see in (2.8) of \cite{PSZ09} that we always have $T_0\mathcal{E} \subseteq \mathcal{N}(A_0)$. This shows
\begin{align*}
	3 = \dim(\mathcal{E}) = \dim(T_0\mathcal{E}) \leq \dim(\mathcal{N}(A_0)) = 3,
\end{align*}
which leads to $T_0\mathcal{E} = \mathcal{N}(A_0)$ and thus proves assumption (b).

We continue with the proof of assumption (c). To this end the following two lemmas will be helpful.

\begin{lemma}\label{lem:ProjectionNullspace}
Let $P: X_0 \longrightarrow \mathcal{R}(P) = \mathcal{N}(A_0)$ be a projection and $P A_0 = A_0 P (= 0)$, then $\mathcal{N}(A_0) = \mathcal{N}(A_0^2)$. 
\end{lemma}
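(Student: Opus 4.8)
The plan is to prove the two inclusions $\mathcal{N}(A_0)\subseteq\mathcal{N}(A_0^2)$ and $\mathcal{N}(A_0^2)\subseteq\mathcal{N}(A_0)$ separately. The first is immediate: if $A_0x=0$ then $A_0^2x=A_0(A_0x)=0$, so $x\in\mathcal{N}(A_0^2)$. The content of the lemma is the reverse inclusion, and the idea is that it should be proved as a soft statement about the reducing projection $P$, without invoking the differential-operator structure of $A_0$ at all.

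First I would record the consequences of the hypotheses that will be used. From $\mathcal{R}(P)=\mathcal{N}(A_0)$ we will use only the inclusion $\mathcal{N}(A_0)\subseteq\mathcal{R}(P)$. From $PA_0=0$ we obtain $\mathcal{R}(A_0)\subseteq\mathcal{N}(P)$, since for every $y\in X_0$ one has $P(A_0y)=(PA_0)y=0$. Finally, idempotency of $P$ gives the purely algebraic fact $\mathcal{R}(P)\cap\mathcal{N}(P)=\{0\}$: if $x=Py\in\mathcal{R}(P)$ and $Px=0$, then $x=Py=P^2y=P(Py)=Px=0$. (Note that no boundedness or self-adjointness of $P$ is needed here, only $P^2=P$, so the argument is robust to exactly how $P$ is constructed later.)

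With these preliminaries, let $x\in\mathcal{N}(A_0^2)$, i.e.\ $A_0(A_0x)=0$. Then on the one hand $A_0x\in\mathcal{N}(A_0)\subseteq\mathcal{R}(P)$, and on the other hand $A_0x\in\mathcal{R}(A_0)\subseteq\mathcal{N}(P)$. Hence $A_0x\in\mathcal{R}(P)\cap\mathcal{N}(P)=\{0\}$, so $A_0x=0$, that is $x\in\mathcal{N}(A_0)$. This proves $\mathcal{N}(A_0^2)\subseteq\mathcal{N}(A_0)$ and, together with the trivial inclusion, the claimed equality.

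There is no genuine obstacle here; the only thing to get right is the strategic point of \emph{not} trying to use anything about $A_0$ beyond linearity, and of recognizing that the hypothesis $A_0P=0$ (equivalently $\mathcal{R}(P)=\mathcal{N}(A_0)$) is what places $A_0x$ in $\mathcal{R}(P)$ while $PA_0=0$ is what simultaneously places it in $\mathcal{N}(P)$; the direct-sum splitting $X_0=\mathcal{R}(P)\oplus\mathcal{N}(P)$ afforded by idempotency then forces $A_0x=0$. In the context of assumption (c) of Theorem~\ref{thm:GPLStability}, this lemma will be combined with the explicit description of $\mathcal{N}(A_0)$ already obtained to conclude semisimplicity of the eigenvalue $0$.
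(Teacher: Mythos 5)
Your proof is correct and uses essentially the same mechanism as the paper: for $x\in\mathcal{N}(A_0^2)$, the element $A_0x$ lies in $\mathcal{R}(P)=\mathcal{N}(A_0)$ on one hand and is killed by $P$ on the other (since $PA_0=0$), forcing $A_0x=0$. The paper condenses this into the single computation $A_0v = PA_0v = A_0Pv = 0$; your rephrasing via $\mathcal{R}(P)\cap\mathcal{N}(P)=\{0\}$ is the same argument, just stated more explicitly.
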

\begin{proof}
The inclusion $\mathcal{N}(A_0) \subseteq \mathcal{N}(A_0^2)$ is trivial. Hence assume $v \in \mathcal{N}(A_0^2)$, then $A_0^2 v = 0$, which means $A_0 v \in \mathcal{N}(A_0)$. $P$ applied to an element of $\mathcal{N}(A_0)$ is the identity and we obtain $A_0 v = P A_0 v = A_0 P v = 0$, which shows $v \in \mathcal{N}(A_0)$.
\end{proof}

\begin{lemma}\label{lem:ProjectionSemiSimple}
Assume $\mathcal{N}(A_0) = \mathcal{N}(A_0^2)$. Then $X_0 = \mathcal{N}(A_0) \oplus \mathcal{R}(A_0)$, which means $0$ is a semi-simple eigenvalue of $A_0$.
\end{lemma}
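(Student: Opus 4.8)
The plan is to derive the decomposition from the Fredholm property of $A_0\colon X_1\to X_0$. This property is available because $-A_0$ generates an analytic semigroup by Theorem~\ref{thm:Semigroup2}, so $\rho(A_0)\neq\emptyset$, and because the embedding $X_1=\mathcal{D}(A_0)\hookrightarrow X_0$ is compact: indeed $W^2_p(\Gamma^*)\hookrightarrow L_p(\Gamma^*)$ and $W^{3-\frac1p}_p(\d\Gamma^*)\hookrightarrow W^{1-\frac1p}_p(\d\Gamma^*)$ are compact by the Rellich--Kondrachov theorem on the compact manifolds $\Gamma^*$ and $\d\Gamma^*$. Fixing $\lambda_0\in\rho(A_0)$, the resolvent $(\lambda_0-A_0)^{-1}$ is bounded from $X_0$ into $X_1$, hence compact as an operator on $X_0$ and on $X_1$. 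Writing, on $X_1$,
\begin{align*}
	A_0 = (\lambda_0 - A_0)\bigl(\lambda_0 (\lambda_0 - A_0)^{-1}|_{X_1} - \id_{X_1}\bigr),
\end{align*}
the first factor is an isomorphism $X_1\to X_0$ and the second is $-\id_{X_1}$ plus a compact operator on $X_1$, so both are Fredholm of index zero; therefore $A_0$ is Fredholm of index zero. In particular $\mathcal{R}(A_0)$ is closed in $X_0$ and $\dim\mathcal{N}(A_0)=\operatorname{codim}\mathcal{R}(A_0)<\infty$.

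Next I would exploit the hypothesis $\mathcal{N}(A_0)=\mathcal{N}(A_0^2)$ to see that the sum $\mathcal{N}(A_0)+\mathcal{R}(A_0)$ is direct. If $v\in\mathcal{N}(A_0)\cap\mathcal{R}(A_0)$, write $v=A_0w$ with $w\in X_1$; then $A_0^2w=A_0v=0$, so $w\in\mathcal{N}(A_0^2)=\mathcal{N}(A_0)$ and hence $v=A_0w=0$.

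Finally, a dimension count closes the argument. The subspace $\mathcal{N}(A_0)\oplus\mathcal{R}(A_0)$ is closed, being the sum of the closed subspace $\mathcal{R}(A_0)$ and the finite-dimensional subspace $\mathcal{N}(A_0)$. Since $\mathcal{N}(A_0)\cap\mathcal{R}(A_0)=\{0\}$, the canonical projection maps $\mathcal{N}(A_0)$ injectively into $X_0/\mathcal{R}(A_0)$, and its image has dimension $\dim\mathcal{N}(A_0)=\operatorname{codim}\mathcal{R}(A_0)=\dim\bigl(X_0/\mathcal{R}(A_0)\bigr)$; thus the image is all of $X_0/\mathcal{R}(A_0)$, which forces $X_0=\mathcal{N}(A_0)\oplus\mathcal{R}(A_0)$. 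By definition this means that $0$ is a semi-simple eigenvalue of $A_0$.

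I expect the only non-routine step to be the first one, namely upgrading ``$-A_0$ generates an analytic semigroup'' to the Fredholm property: the point is that the compact embedding $X_1\hookrightarrow X_0$ makes $A_0$, after composition with the isomorphism $\lambda_0-A_0$, a compact perturbation of the identity on $X_1$, which yields both the closedness of $\mathcal{R}(A_0)$ and the equality $\dim\mathcal{N}(A_0)=\operatorname{codim}\mathcal{R}(A_0)$. Everything afterwards is elementary linear algebra built on $\mathcal{N}(A_0)=\mathcal{N}(A_0^2)$. Alternatively, one may argue via the Riesz spectral projection of $A_0$ associated with the isolated eigenvalue $0$: the hypothesis forces the generalized eigenspace $\bigcup_{k\ge 1}\mathcal{N}(A_0^k)$ to equal $\mathcal{N}(A_0)$, while on the complementary spectral subspace $A_0$ is boundedly invertible with range exactly $\mathcal{R}(A_0)$, giving the same splitting.
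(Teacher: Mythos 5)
Your argument is correct, and it takes a somewhat different route than the paper. The paper passes to the resolvent $B=(\mu\,\id+A_0)^{-1}$, observes that this is a compact operator on $X_0$ (via the compact embedding $X_1\hookrightarrow X_0$), transfers the hypothesis $\mathcal{N}(A_0)=\mathcal{N}(A_0^2)$ into $\mathcal{N}(\tfrac1\mu\id-B)=\mathcal{N}\bigl((\tfrac1\mu\id-B)^2\bigr)$, and then invokes the Riesz decomposition for the eigenvalue $\tfrac1\mu$ of the compact operator $B$; the nullspace and range identities for $B$ versus $A_0$ are then translated back. You instead work with $A_0\colon X_1\to X_0$ directly: you establish that it is Fredholm of index zero (by factoring out the isomorphism $\lambda_0-A_0$ and reducing to a compact perturbation of $-\id_{X_1}$), use $\mathcal{N}(A_0)=\mathcal{N}(A_0^2)$ only to get $\mathcal{N}(A_0)\cap\mathcal{R}(A_0)=\{0\}$, and then close the gap by a dimension count using $\dim\mathcal{N}(A_0)=\operatorname{codim}\mathcal{R}(A_0)$. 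Both proofs ultimately rest on the same fact---compactness of the embedding $X_1\hookrightarrow X_0$ provides compactness of the resolvent---but your version avoids translating null spaces and ranges between $A_0$ and $B$, at the cost of explicitly verifying the Fredholm-index-zero property. The paper's version has the advantage of reducing to a single black-box statement about compact operators. One small point worth flagging: you need the nonemptiness of $\rho(A_0)$ not just abstractly but with an explicit $\lambda_0$ making $\lambda_0-A_0\colon X_1\to X_0$ an isomorphism; this is exactly what Remark~\ref{rem:MuInRhoA0} in the paper supplies, and your appeal to sectoriality via Theorem~\ref{thm:Semigroup2} covers it.
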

\begin{proof}
Let $\mu \in (0,\infty)$ satisfy $\mu \notin \sigma(-A_0)$ and define
\begin{align*}
	B := (\mu \id + A_0)^{-1}: X_0 \longrightarrow X_0.
\end{align*}
Simple algebraic manipulations show that
\begin{itemize}
\item[$\bullet$] $\frac{1}{\mu}$ is an eigenvalue of $B$, 
\item[$\bullet$] $\mathcal{N}\left(\frac{1}{\mu} \id - B\right) = \mathcal{N}(A_0)$,
\item[$\bullet$] $\mathcal{N}\left(\frac{1}{\mu} \id - B\right) = \mathcal{N}\left(\left(\frac{1}{\mu} \id - B\right)^2\right)$,
\item[$\bullet$] $\mathcal{R}\left(\frac{1}{\mu} \id - B\right) = \mathcal{R}(A_0)$.
\end{itemize}
Due to the compact embedding of $X_1 \hookrightarrow X_0$ the operator
\begin{align*}
	B = (\mu \id + A_0)^{-1}: X_0 \longrightarrow X_1 \hookrightarrow X_0
\end{align*}
is compact as a composition of a bounded and a compact operator. The spectral theorem for compact operators shows
\begin{align*}
	X_0 = \mathcal{N}\left(\frac{1}{\mu} \id - B\right) \oplus \mathcal{R}\left(\frac{1}{\mu} \id - B\right)
\end{align*}
and due to the above identities we get $X_0 = \mathcal{N}(A_0) \oplus \mathcal{R}(A_0)$.
\end{proof}

\begin{rem}\label{rem:MuInRhoA0}
In Theorem \ref{thm:Semigroup2} we saw that $-A_0$ is the generator of an analytic semigroup, which means that this operator is sectorial. Hence there exists $\omega \in \R$ and $\vartheta \in (\frac{\pi}{2}, \pi)$ such that $\rho(-A_0) \supseteq S_{\omega,\vartheta} := \{z \in \C \mid z \neq \omega, |\arg(z - \omega)| < \vartheta\}$. Especially $\rho(-A_0)$ contains the interval $(\omega,\infty)$ and one can always find $\mu \in (0,\infty)$, which satisfies $\mu \notin \sigma(-A_0)$ as required in the proof of Lemma \ref{lem:ProjectionSemiSimple}. Also by the sectoriality of $-A_0$ we know that $\norm{(\mu \id + A_0)^{-1}}_{\mathcal{L}(X_0)} \leq \frac{M}{|\mu - \omega|}$ for all $\mu \in S_{\omega,\vartheta}$, which justifies the boundedness of $B$ in the 5th step of the previous proof.
\end{rem}

By Lemma \ref{lem:ProjectionNullspace} and \ref{lem:ProjectionSemiSimple} we see that it is enough for $0$ to be a semi-simple eigenvalue to find a projection as in the assumptions of Lemma \ref{lem:ProjectionNullspace}. Indeed we can find such a projection, which is given by
\begin{align*}
	P: X_0 \longrightarrow \mathcal{N}(A_0): v = (v^{(1)}, v^{(2)}) \longmapsto P(v) := a_0(v) v_0 + a_1(v) v_1 + a_2(v) v_2,
\end{align*}
where the coefficients $a_i$ are defined as follows
\begin{align*}
	a_0(v) & := \frac{\int_{\Gamma^*}\limits v^{(1)} \dH^2}{\int_{\Gamma^*}\limits v^{(1)}_0 \dH^2}, \\
	a_1(v) & := \frac{\skp{v^{(1)}}{v^{(1)}_1}_{L_2(\Gamma^*)} + \frac{1}{\sin(\alpha^*)^2} \int_{\d \Gamma^*}\limits v^{(2)} v^{(2)}_1 \dH^1}{\skp{v^{(1)}_1}{v^{(1)}_1}_{L_2(\Gamma^*)} + \frac{1}{\sin(\alpha^*)^2} \int_{\d \Gamma^*}\limits v^{(2)}_1 v^{(2)}_1 \dH^1}, \\
	a_2(v) & := \frac{\skp{v^{(1)}}{v^{(1)}_2}_{L_2(\Gamma^*)} + \frac{1}{\sin(\alpha^*)^2} \int_{\d \Gamma^*}\limits v^{(2)} v^{(2)}_2 \dH^1}{\skp{v^{(1)}_2}{v^{(1)}_2}_{L_2(\Gamma^*)} + \frac{1}{\sin(\alpha^*)^2} \int_{\d \Gamma^*}\limits v^{(2)}_2 v^{(2)}_2 \dH^1}
\end{align*}
with $v_0$, $v_1$ and $v_2$ as the elements from (\ref{eq:NullspaceElements}) spanning the nullspace. This projection has the desired properties, because obviously $\mathcal{R}(P) = \mathcal{N}(A_0)$ since $v_0$, $v_1$ and $v_2$ span the nullspace of $A_0$. Moreover, $P|_{\mathcal{N}(A_0)} = \id_{\mathcal{N}(A_0)}$ or equivalently $P^2 = P$, because $a_i(v_j) = \delta_{ij}$ for $i,j \in \{0, 1, 2\}$ as one can see by elementary but time-consuming calculations (cf. pages 133ff. of \cite{Mue13}). Furthermore, $P A_0 = 0$ as one can see by
\begin{align*}
	\int_{\Gamma^*} A_0 v^{(1)} \dH^2 & = \int_{\Gamma^*} \left(\Delta^B v^{(1)} - \mint_{\Gamma^*} \Delta^B v^{(1)} \dH^2\right) \dH^2 \\
		& = \int_{\Gamma^*} \Delta^B v^{(1)} \dH^2 - \left(\mint_{\Gamma^*} \Delta^B v^{(1)} \dH^2\right) \int_{\Gamma^*} 1 \dH^2 = 0,
\end{align*}
and $\skp{(A_0 v)^{(1)}}{v^{(1)}_i}_{L_2(\Gamma^*)} = \frac{-1}{\sin(\alpha^*)^2} \int_{\d \Gamma^*} (A_0 v)^{(2)} v_i^{(2)} \dH^1$ for $i \in \{1, 2\}$. Hence $P A_0 = 0 (= A_0 P)$ and having found this projection we completed the prove of assumption (c).

The last assumption we have to check for Theorem \ref{thm:GPLStability} is (d). Here we will see that the eigenvalues of $A_0$ can be traced back to the eigenvalues of the operator $\Delta^B$. Since we want to show that $\sigma(A_0) \setminus \{0\}$ is contained in the complex right half-plane, we can ignore eigenfunctions corresponding to the eigenvalue $0$. Assume that $(\lambda,u)$ is an eigenpair of $A_0$ with $\lambda \neq -\frac{2}{{R^*}^2} = -|\sigma^*|^2$. Then we first remark that it is not possible for $u^{(1)}$ to be constant, since otherwise
\begin{align*}
	(A_0 u)^{(1)} = \underbrace{\Delta^B u^{(1)}}_{= 0} - \mint_{\Gamma^*} \underbrace{\Delta^B u^{(1)}}_{= 0} \dH^2 = 0
\end{align*}
and $u$ would correspond to the eigenvalue $0$, which is not considered. \\
Due to $\lambda \neq -|\sigma^*|^2$ the constant
\begin{align*}
	c(\lambda,u) := \begin{pmatrix}
							 \dfrac{1}{\lambda + |\sigma^*|^2} \mint_{\Gamma^*}\limits \Delta^B u^{(1)} \dH^2 \\
							 0
						 \end{pmatrix}
\end{align*}
is well-defined and the function $\tilde{u} := u + c(\lambda,u) \not\equiv 0$ is an eigenfunction of $\Delta^B$, as one can see from
\begin{align*}
	\Delta^B \tilde{u}^{(1)} & = \Delta^B u^{(1)} - \underbrace{\Delta_{\Gamma^*} c(\lambda,u)}_{= 0} - |\sigma^*|^2 c(\lambda,u) \\
									 & = \Delta^B u^{(1)} - \frac{|\sigma^*|^2}{\lambda + |\sigma^*|^2} \mint_{\Gamma^*} \Delta^B u^{(1)} \dH^2 \\
									 & = \Delta^B u^{(1)} - \mint_{\Gamma^*} \Delta^B u^{(1)} \dH^2
										+ \frac{\lambda}{\lambda + |\sigma^*|^2} \mint_{\Gamma^*} \Delta^B u^{(1)} \dH^2 \\
									 & = A_0 u^{(1)} + \lambda c(\lambda,u) = \lambda u^{(1)} + \lambda c(\lambda,u) = \lambda \tilde{u}^{(1)}.
\end{align*}
Obviously, the second component of $\Delta^B \tilde{u}$ does not change compared to $\Delta^B u$. This argument does not work for $\lambda = -\frac{2}{{R^*}^2}$. Therefore we have shown
\begin{align}\label{eq:EVsOfA0AndDeltaB}
	\sigma(A_0) \subseteq \sigma(\Delta^B) \cup \left\{-\frac{2}{{R^*}^2}\right\}.
\end{align}
Remember that we have already proven some statements concerning the eigenvalues of $\Delta^B$. For example we have seen that all eigenvalues of $\Delta^B$ are real. Since also $-\frac{2}{{R^*}^2}$ is in $\R$, all eigenvalues of $A_0$ are real. With this knowledge the proof of assumption (d) relies on the following argument: \\
If one real eigenvalue of $A_0$ would change its sign while varying the parameters $(a,b)$, it would also become $0$ at some point, provided that the eigenvalues depend continuously on $(a,b)$. But this would cause $\mathcal{N}(A_0)$ to be higher-dimensional than before. We have already seen that independent of the choice of $a > -1$ and $b > C_{crit}$ the nullspace $\mathcal{N}(A_0)$ is always $3$-dimensional. For this reason $\sigma(A_0) \setminus \{0\} \subseteq \R_+ \subseteq \C_+$ has to hold as long as the varied parameters do not violate the condition $a > -1$ and $b > C_{crit}$.

So the strategy to prove (d) will be as follows:
\begin{enumerate}
	\item Show that the eigenvalues of $A_0$ depend continuously on the parameters $a$ and $b$.
	\item Find a particular parameter setting $(a_0, b_0)$, where we can easily show that the spectrum of $A_0$ is contained in $[0,\infty)$.
	\item Starting from the particular setting $(a_0,b_0)$, vary the parameters to cover a wider parameter range.
\end{enumerate}

We start by showing the continuous dependence of the eigenvalues on $(a,b)$. Obviously, $\cos(\alpha^*)$, $\sin(\alpha^*)$ and $R^*$ depend continuously on the parameters $a > -1$ and $b > 0$ and so do all coefficients appearing in $A_0$ and hence also $A_0$ itself. Therefore we can show
\begin{align*}
	A_0(\tilde{a},\tilde{b}) \xrightarrow[(\tilde{a},\tilde{b}) \rightarrow (a,b)]{} A_0(a,b) \qquad \text{ in } \mathcal{L}(\mathcal{D}(A_0),X_0),
\end{align*}
where $\mathcal{D}(A_0)$ is equipped with the graph norm. Lemma A.3.1 from \cite{Lun95} shows that
\begin{align*}
	(\lambda \id - A(\tilde{a},\tilde{b}))^{-1} \xrightarrow[(\tilde{a},\tilde{b}) \rightarrow (a,b)]{} (\lambda \id - A(a,b))^{-1} \qquad \text{ in } \mathcal{L}(X_0).
\end{align*}
Using Theorem 2.25 of \cite{Kat95} we see that $A_0(\tilde{a},\tilde{b}) \xrightarrow[(\tilde{a},\tilde{b}) \rightarrow (a,b)]{} A_0(a,b)$ in the generalized sense (cf. IV-§ 2 in \cite{Kat95}). In doing so it is important to remark that $A_0$ is closed, because the resolvent set is not empty. Section IV-§ 3.5 of \cite{Kat95} shows that each finite system of eigenvalues depends continuously on $(a,b)$. We saw in Remark \ref{rem:IsolatedEVs} that all eigenvalues of $\Delta^B$ are isolated and one possible new eigenvalue does not change this fact for $A_0$. After every eigenvalue of $A_0$ is isolated, the one-element set $\{\lambda_i\}$ forms such a finite system and therefore depends continuously on the parameters $(a,b)$. This completes the first part of our strategy towards assumption (d).

Now we search for a situation, where we can easily compute the eigenvalues of $A_0$. We find this in the halfsphere. We choose an arbitrary $a_0 > 0$. By (\ref{eq:RangeOfAlpha}) we know that for this choice of $a_0$ an angle $\cos(\alpha^*) = 0$ is always possible. For the moment the parameter $b_0 > 0$ could be chosen arbitrarily since $\cos(\alpha^*) = 0$ simplifies $b_0 > C_{crit}$ to $b_0 > 0$. But for later purpose we choose $b_0 \in (0,1)$. We set $r^* = \frac{b_0}{a_0}$ and obtain a stationary halfsphere. The reason why we choose $\Gamma^*$ to be the halfsphere is that by its reflection along the $x$-$y$-plane, called $-\Gamma^*$, the resulting surface $\Gamma^* \cup -\Gamma^*$ is smooth. \\
Due to (\ref{eq:EVsOfA0AndDeltaB}) the eigenvalue problem we have to solve is
\begin{align}\label{eq:SpecialEP}
	\lambda \rho = \Delta^B \rho = \begin{pmatrix}
												 -\dfrac{1}{{R^*}^2 \sin(\theta)^2} \rho^{(1)}_{\phi \phi}
												 - \dfrac{1}{{R^*}^2} \rho^{(1)}_{\theta \theta}
												 - \dfrac{1}{{R^*}^2} \cot(\theta) \rho^{(1)}_\theta - \dfrac{2}{{R^*}^2} \rho^{(1)} \\
												 \dfrac{1}{R^*} \rho^{(1)}_\theta(\pi - \alpha^*)
												 - \dfrac{b_0}{{R^*}^2} (\rho^{(2)}_{\phi \phi} + \rho^{(2)})
											 \end{pmatrix},
\end{align}
where we have to impose $2\pi$-periodicity in $\phi$ and continuity for $\theta = 0$. To avoid unnecessary terms we multiply by ${R^*}^2$, add $2 \rho$ and obtain
\begin{align*}
	({R^*}^2\lambda + 2) \rho = \begin{pmatrix}
											 -\dfrac{1}{\sin(\theta)^2} \rho^{(1)}_{\phi \phi} - \rho^{(1)}_{\theta \theta}
											 - \cot(\theta) \rho^{(1)}_\theta \\
											 R^* \rho^{(1)}_\theta(\pi - \alpha^*)
											 - b_0 (\rho^{(2)}_{\phi \phi} + \rho^{(2)}) + 2 \rho^{(2)}
										 \end{pmatrix}.
\end{align*}
Then we substitute $\mu := {R^*}^2 \lambda + 2$ and search for all values $\mu$ can attain. Having a reflectional symmetric $\Gamma^*$ is important but not enough. We also need smoothly reflectable eigenfunctions, i.e. eigenfunctions with $\rho_\theta|_{\theta = \pi - \alpha^*} = 0$. To achieve this we have to introduce one more parameter $d \in [0,1]$ and solve
\begin{align}\label{eq:DeltaD}
	\begin{pmatrix}
		\mu \rho^{(1)} \\
		d \mu \rho^{(2)}
	\end{pmatrix} = \begin{pmatrix}
							 -\dfrac{1}{\sin(\theta)^2} \rho^{(1)}_{\phi \phi} - \rho^{(1)}_{\theta \theta}
							 - \cot(\theta) \rho^{(1)}_\theta \\
							 R^* \rho^{(1)}_\theta(\pi - \alpha^*) - d b_0 (\rho^{(2)}_{\phi \phi} + \rho^{(2)}) + 2d \rho^{(2)}
						 \end{pmatrix} =: \Delta^d \rho
\end{align}
on the halfsphere $\Gamma^*$. For $d = 0$ this reads as
\begin{align*}
	\mu \rho^{(1)} = -\frac{1}{\sin(\theta)^2} \rho^{(1)}_{\phi \phi} - \rho^{(1)}_{\theta \theta} - \cot(\theta) \rho^{(1)}_\theta
\end{align*}
with the boundary condition $\rho^{(1)}_\theta(\pi - \alpha^*) = 0$. Together with the $2\pi$-periodicity in $\phi$ and the continuity for $\theta = 0$ we see that any solution of this problem on the halfsphere $\Gamma^*$ can be smoothly reflected to a solution of
\begin{align*}
	\mu \rho^{(1)} = -\frac{1}{\sin(\theta)^2} \rho^{(1)}_{\phi \phi} - \rho^{(1)}_{\theta \theta} - \cot(\theta) \rho^{(1)}_\theta
\end{align*}
on the full sphere $\Gamma^* \cup -\Gamma^*$, with periodicity in $\phi$ and continuity for $\theta = 0$ and $\theta = \pi$. Yet, this eigenvalue problem for the Laplace operator on the sphere is already well studied by different authors - for example by \cite{CH68}, \cite{Tri72} or chapter XIII in \cite{Jae01}. As each of these sources shows, the eigenvalues of this equation are given as $k (k + 1)$ for $k \in \N$. Thus $\mu_k = k (k + 1)$ and for $\lambda_k$ we have the equation $({R^*}^2 \lambda_k + 2) = k (k + 1)$, which leads to
\begin{align}\label{eq:SpecialEVs}
	\lambda_k = \frac{k (k + 1) - 2}{{R^*}^2} \qquad \text{ for every } k \in \N.
\end{align}
Obviously, we see $\lambda_k \geq 0$ for $k \geq 1$ and the only eigenvalue that could cause a problem is $\lambda_0 = -\frac{2}{{R^*}^2}$. We will see later that although $\lambda_0 = -\frac{2}{{R^*}^2}$ is a possible eigenvalue of $\Delta^B$ it is not possible as eigenvalue for $A_0$. \\
Now we want to increase the parameter $d$ from $0$ to $1$. We will need the continuous dependence of the eigenvalues on $d$ to argue that while increasing $d$ no eigenvalue can change its sign. This is again due to the fact that the nullspace is three dimensional. Although we have not included the weight $d$ into the considerations concerning the nullspace previously in this section, the calculations do not change dramatically and we also get that the nullspace is always $3$-dimensional for all $d \in [0,1]$. Therefore the continuous dependence of the eigenvalues on $d$ is the next ingredient that we are going to prove. \\
With
\begin{align*}
	\Delta^{-d}: H(d) := L_2(\Gamma^*) \times L_2(\d \Gamma^*) \longrightarrow H(d)
\end{align*}
we denote the inverse operator of $\Delta^d + c \id$, where $H(d)$ shall be equipped with the inner product $\skp{u}{v}_{H(d)} := \skp{u^{(1)}}{v^{(1)}}_{L_2(\Gamma^*)} + d \skp{u^{(2)}}{v^{(2)}}_{L_2(\d \Gamma^*)}$. Moreover, we assume that $c$ is large enough to guarantee that all eigenvalues are positive. Since we only want to show the continuous dependence of the eigenvalues, we do not care for shifts of the operator and the resulting shift of the spectrum. We consider the inverse operator since its spectrum is bounded, which will be important later on. Assuming that we have a solution $\rho$ of the equation (\ref{eq:DeltaD}) we get
\begin{align}\label{eq:TestedEF}
	& \mu \skp{\rho^{(1)}}{\rho^{(1)}}_{L_2(\Gamma^*)} + d \mu \skp{\rho^{(2)}}{\rho^{(2)}}_{L_2(\d \Gamma^*)} \notag \\
	& \hspace*{5mm} = \skp{(\Delta^d \rho)^{(1)}}{\rho^{(1)}}_{L_2(\Gamma^*)} + \skp{(\Delta^d \rho)^{(2)}}{\rho^{(2)}}_{L_2(\d \Gamma^*)} \notag \\
	& \hspace*{5mm} = \int_{\Gamma^*} -(\Delta_{\Gamma^*} \rho^{(1)}) \rho^{(1)} \dH^2 + \int_{\d \Gamma^*} \left(-(n_{\d \Gamma^*} \cdot \nabla_{\Gamma^*} \rho^{(1)}) + d b_0 \rho^{(2)}_{\sigma\sigma} + d \frac{b_0}{{R^*}^2} \rho^{(2)}\right) \rho^{(2)} \dH^1 \notag \\
	& \hspace*{5mm} = \int_{\Gamma^*} \norm{\nabla_{\Gamma^*} \rho^{(1)}}^2 \dH^2 - \int_{\d \Gamma^*} d b_0 (\rho^{(2)}_\sigma)^2 + d \frac{b_0}{{R^*}^2} (\rho^{(2)})^2 \dH^1.
\end{align}
If we denote the eigenvalues of $\Delta^{-d}$ by $\nu$, this can be rewritten as
\begin{align}\label{eq:ExpressionForNu}
	\nu = \frac{\skp{\Delta^{-d} \rho}{\rho}_{H(1)}}{\skp{\rho}{\rho}_{H(d)}}.
\end{align}
This representation is all we need for Courant's maximum-minimum principle (cf. Chap. VII §1.4 in \cite{CH68}) to see that for a fixed $d$ the eigenvalues $\nu_k(d)$ can be written as
\begin{align*}
	\nu_k(d) = \max_{W \in \Sigma_k} \min_{\rho \in W \setminus \{0\}} \frac{\skp{\Delta^{-d} \rho}{\rho}_{H(1)}}{\skp{\rho}{\rho}_{H(d)}},
\end{align*}
where $\Sigma_k$ denotes the set of all $k$-dimensional subspaces of $H(d)$. Now we want to sketch the continuous dependence of $\nu_k(d)$ on $d$. With $E_k(d) = \spn \{\rho_1(d), \ldots, \rho_k(d)\}$ as the span of the first $k$ eigenfunctions, we estimate
\begin{align*}
	\nu_k(d_1) - \nu_k(d_2) & \geq \min_{\rho \in E_k(d_2) \setminus \{0\}} \frac{\skp{\Delta^{-d_1} \rho}{\rho}_{H(1)}}{\skp{\rho}{\rho}_{H(d_1)}} - \min_{\rho \in E_k(d_2) \setminus \{0\}} \frac{\skp{\Delta^{-d_2} \rho}{\rho}_{H(1)}}{\skp{\rho}{\rho}_{H(d_2)}},
\end{align*}
since the second maximum is attained exactly for $E_k(d_2)$ and the first summand gets smaller if we consider this particular choice. Then we are able to choose $\hat{\rho} \in E_k(d_2)$ with
\begin{align}\label{eq:NormedEF}
	\skp{\hat{\rho}}{\hat{\rho}}_{H(d_1)} = 1
\end{align}
such that the first minimum is attained and get
\begin{align*}
	\nu_k(d_1) - \nu_k(d_2) \geq \skp{\Delta^{-d_1} \hat{\rho}}{\hat{\rho}}_{H(1)} - \frac{\skp{\Delta^{-d_2} \hat{\rho}}{\hat{\rho}}_{H(1)}}{\skp{\hat{\rho}}{\hat{\rho}}_{H(d_2)}}.
\end{align*}
This can be rewritten to
\begin{align*}
	\nu_k(d_1) - \nu_k(d_2) & \geq \skp{\Delta^{-d_1} \hat{\rho}}{\hat{\rho}}_{H(1)} - \frac{\skp{\Delta^{-d_2} \hat{\rho}}{\hat{\rho}}_{H(1)}}{\skp{\hat{\rho}}{\hat{\rho}}_{H(d_2)}} - \skp{\Delta^{-d_2} \hat{\rho}}{\hat{\rho}}_{H(1)} + \skp{\Delta^{-d_2} \hat{\rho}}{\hat{\rho}}_{H(1)} \\
									& = \skp{(\Delta^{-d_1} - \Delta^{-d_2}) \hat{\rho}}{\hat{\rho}}_{H(1)} + \left(1 - \frac{1}{\skp{\hat{\rho}}{\hat{\rho}}_{H(d_2)}}\right) \skp{\Delta^{-d_2} \hat{\rho}}{\hat{\rho}}_{H(1)}.
\end{align*}
The appearing denominator can be written as
\begin{align*}
	\skp{\hat{\rho}}{\hat{\rho}}_{H(d_2)} & = \skp{\hat{\rho}^{(1)}}{\hat{\rho}^{(1)}}_{L_2(\Gamma^*)} + d_2 \skp{\hat{\rho}^{(2)}}{\hat{\rho}^{(2)}}_{L_2(\d \Gamma^*)} \\
		& + d_1 \skp{\hat{\rho}^{(2)}}{\hat{\rho}^{(2)}}_{L_2(\d \Gamma^*)} - d_1 \skp{\hat{\rho}^{(2)}}{\hat{\rho}^{(2)}}_{L_2(\d \Gamma^*)} \\
		& = 1 + (d_2 - d_1) \skp{\hat{\rho}^{(2)}}{\hat{\rho}^{(2)}}_{L_2(\d \Gamma^*)}
\end{align*}
and hence we end up with
\begin{align*}
	\nu_k(d_1) - \nu_k(d_2) & \geq \skp{(\Delta^{-d_1} - \Delta^{-d_2}) \hat{\rho}}{\hat{\rho}}_{H(1)} \\
		& + \left(1 - \frac{1}{1 + (d_2 - d_1) \skp{\hat{\rho}^{(2)}}{\hat{\rho}^{(2)}}_{L_2(\d \Gamma^*)}}\right) \skp{\Delta^{-d_2} \hat{\rho}}{\hat{\rho}}_{H(1)}.
\end{align*}
If we consider the limit $d_2 \longrightarrow d_1$, we first of all observe that the first term on the right-hand side converges to zero which can be see similar as in Subsection 2.3.1 of \cite{Hen06}. It might be noteworthy that the proofs of Theorem 2.3.1 and Theorem 2.3.2 in \cite{Hen06} contain two little mistakes: In the proof of Theorem 2.3.1 the minimum and maximum must be interchanged and in the proof of Theorem 2.3.2 equation (2.22) should estimate the norm $\norm{A_n}_{\mathcal{L}(H^{-1},H^1_0)}$ as this is used in the last line of the proof, but instead it estimates $\norm{A_n}_{\mathcal{L}(L_2,L_2)}$. But the argument previous to (2.22) also justifies this modification and the result remains unchanged. Then we immediately see that
\begin{align*}
	\lim_{d_2 \rightarrow d_1} \inf \left(\nu_k(d_1) - \nu_k(d_2)\right) \geq 0
\end{align*}
as long as $\skp{\hat{\rho}^{(2)}}{\hat{\rho}^{(2)}}_{L_2(\d \Gamma^*)}$ remains bounded independent of $d$. In fact, for an eigenfunction $\rho$ that satisfies (\ref{eq:NormedEF}), the equation (\ref{eq:ExpressionForNu}) shows that
\begin{align*}
	\skp{\Delta^{-d_1} \rho}{\rho}_{H(1)} = \nu \skp{\rho}{\rho}_{H(d_1)} = \nu \leq c < \infty,
\end{align*}
because the eigenvalues of $\Delta^{-d_1}$ are bounded. Yet, controlling $\skp{\Delta^{-d_1} \rho}{\rho}_{H(1)}$ is due to (\ref{eq:TestedEF}) equivalent to controlling the $H^1$-norm of $\rho^{(1)}$, given by
\begin{align*}
	\int_{\Gamma^*} \norm{\nabla_{\Gamma^*} \rho^{(1)}}^2 \dH^2
\end{align*}
for all $d \in [0,1]$. Since
\begin{align*}
	W^1_2(\Gamma^*) \xrightarrow[]{\gamma_0} W^{\frac{1}{2}}_2(\d \Gamma^*) \hookrightarrow L_2(\d \Gamma^*)
\end{align*}
this also controls the $L_2(\d \Gamma^*)$-norm of $\gamma_0 \rho^{(1)} = \rho^{(2)}$, which is what we need. Interchanging the roles of $d_1$ and $d_2$, we also get the converse inequality
\begin{align*}
	\lim_{d_2 \rightarrow d_1} \inf \left(\nu_k(d_2) - \nu_k(d_1)\right) \geq 0.
\end{align*}
Thus $\lim_{d_2 \rightarrow d_1}\limits \nu_k(d_2) = \nu_k(d_1)$ and we obtain the continuous dependence of $\nu_k$ and therefore also of $\mu_k$ on $d$.

We know that for $d = 0$ all but two eigenvalues are positive and independent of $d$ and the nullspace is always $3$-dimensional. If we now increase $d$ from $0$ to $1$, which leads to $\Delta^B$, no eigenvalue can change its sign. Hence all eigenvalues of $\Delta^B$ except $0$ and $\lambda_0$ are positive in this halfsphere case. \\
We still have to exclude $\lambda_0$ for $A_0$. If we assume $\lambda_0 = -\frac{2}{{R^*}^2} \in \sigma(A_0)$ and $\rho_0$ to be an eigenfunction corresponding to $\lambda_0$, we obtain
\begin{align*}
	& & (A_0 \rho_0)^{(1)} &= -\frac{2}{{R^*}^2} \rho_0^{(1)} \\
	&\Rightarrow \quad & -\Delta_{\Gamma^*} \rho_0^{(1)} - \frac{2}{{R^*}^2} \rho_0^{(1)} - \mint_{\Gamma^*} \Delta^B \rho_0^{(1)} \dH^2 &= -\frac{2}{{R^*}^2} \rho_0^{(1)} \\
	&\Rightarrow \quad & \Delta_{\Gamma^*} \rho_0^{(1)} &= -\mint_{\Gamma^*} \Delta^B \rho_0^{(1)} \dH^2 = const. \\
	&\Rightarrow \quad & \Delta_{\Gamma^*} \rho_0^{(1)} &= \mint_{\Gamma^*} \Delta_{\Gamma^*} \rho_0^{(1)} + \frac{2}{{R^*}^2} \rho_0^{(1)} \dH^2 \\
	&\Rightarrow \quad & \Delta_{\Gamma^*} \rho_0^{(1)} &= \Delta_{\Gamma^*} \rho_0^{(1)} \underbrace{\mint_{\Gamma^*} 1 \dH^2}_{= 1} + \frac{2}{{R^*}^2} \mint_{\Gamma^*} \rho_0^{(1)} \dH^2 \\
	&\Rightarrow \quad & \int_{\Gamma^*} \rho_0^{(1)} \dH^2 &= 0.
\end{align*}
This shows that an eigenfunction $\rho_0$ would satisfy $\Delta_{\Gamma^*} \rho_0^{(1)} = c$ and $\int_{\Gamma^*}\limits \rho_0^{(1)} \dH^2 = 0$. This can be used to calculate
\begin{align*}
	0 & = c \int_{\Gamma^*} \rho_0^{(1)} \dH^2 = \int_{\Gamma^*} \rho_0^{(1)} \Delta_{\Gamma^*} \rho_0^{(1)} \dH^2 \\
	  & = -\int_{\Gamma^*} \nabla_{\Gamma^*} \rho_0^{(1)} \cdot \nabla_{\Gamma^*} \rho_0^{(1)} \dH^2 + \int_{\d \Gamma^*} (n_{\d \Gamma^*} \cdot \nabla_{\Gamma^*} \rho_0^{(1)}) \rho_0^{(1)} \dH^2,
\end{align*}
which can be written as
\begin{align*}
	\norm{\nabla_{\Gamma^*} \rho_0^{(1)}}_{L_2(\Gamma^*)}^2 = \int_{\d \Gamma^*} (n_{\d \Gamma^*} \cdot \nabla_{\Gamma^*} \rho_0^{(1)}) \rho_0^{(1)} \dH^2.
\end{align*}
Utilizing the so far unused second component of $A_0 \rho_0$ we get
\begin{align*}
	\frac{2}{{R^*}^2} \rho_0^{(2)} = -(\lambda_0 \rho_0)^{(2)} = -(A_0 \rho_0)^{(2)} = -(n_{\d \Gamma^*} \cdot \nabla_{\Gamma^*} \rho_0^{(1)}) + b_0 (\rho_0^{(2)})_{\sigma \sigma} + \frac{b_0}{{R^*}^2} \rho_0^{(2)}
\end{align*}
or equivalently
\begin{align*}
	(n_{\d \Gamma^*} \cdot \nabla_{\Gamma^*} \rho_0^{(1)}) = -\frac{2}{{R^*}^2} \rho_0^{(2)} + b_0 (\rho_0^{(2)})_{\sigma \sigma} + \frac{b_0}{{R^*}^2} \rho_0^{(2)} = \frac{b_0 - 2}{{R^*}^2} \rho_0^{(2)} + b_0 (\rho_0^{(2)})_{\sigma\sigma}.
\end{align*}
This can be used to transform the calculation before into
\begin{align*}
	\norm{\nabla_{\Gamma^*} \rho_0^{(1)}}_{L_2(\Gamma^*)}^2
		& = \int_{\d \Gamma^*} \frac{b_0 - 2}{{R^*}^2} (\rho_0^{(2)})^2 \dH^1 + \int_{\d \Gamma^*} b_0 (\rho_0^{(2)})_{\sigma\sigma} \rho_0^{(2)} \dH^1 \\
		& = \frac{b_0 - 2}{{R^*}^2} \norm{\rho_0^{(2)}}_{L_2(\d \Gamma^*)} - b_0 \int_{\d \Gamma^*} (\rho_0^{(2)})_{\sigma}^2 \dH^1
\end{align*}
and finally end up with
\begin{align}\label{eq:NormContradiction}
	\norm{\nabla_{\Gamma^*} \rho_0^{(1)}}_{L_2(\Gamma^*)}^2 + b_0 \norm{(\rho_0^{(2)})_{\sigma}}_{L_2(\d \Gamma^*)}^2 = \frac{b_0 - 2}{{R^*}^2} \norm{\rho_0^{(2)}}_{L_2(\d \Gamma^*)}.
\end{align}
Here we reached the point where the choice $b_0 \in (0,1)$ is paying off. Since the numerator is negative, the right-hand side itself is negative. This leads again to a contradiction and shows that $\lambda_0 = -\frac{2}{{R^*}^2}$ is not an eigenvalue of $A_0$. Thus we found the ``easy'' situation, where every non-zero eigenvalue of $A_0$ is positive and can come to the last step for proving assumption (d).

Now we can vary the parameters starting from $(a_0,b_0)$ to cover a wide range, where the eigenvalues are positive. We start by noting that all the coefficients appearing in $A_0$ will not degenerate, because $R^* \neq 0$ and $\sin(\alpha^*) \neq 0$. As we said before the only important restriction comes from the $3$-dimensionality of the nullspace $\mathcal{N}(A_0)$. We saw that we can guarantee this as long as
\begin{align*}
	b > C_{crit} = -\frac{1}{3} R^* \sin(\alpha^*)^2 \cos(\alpha^*) = -\frac{r^*}{3} \sqrt{1 - \left(\frac{b}{r^*} - a\right)^2} \left(\frac{b}{r^*} - a\right).
\end{align*}
This varying process will require several steps and Figure \ref{fig:CriticalCase} is visualizing the upcoming situation.

\begin{figure}[htbp]
	\centering
	\includegraphics[width=140mm]{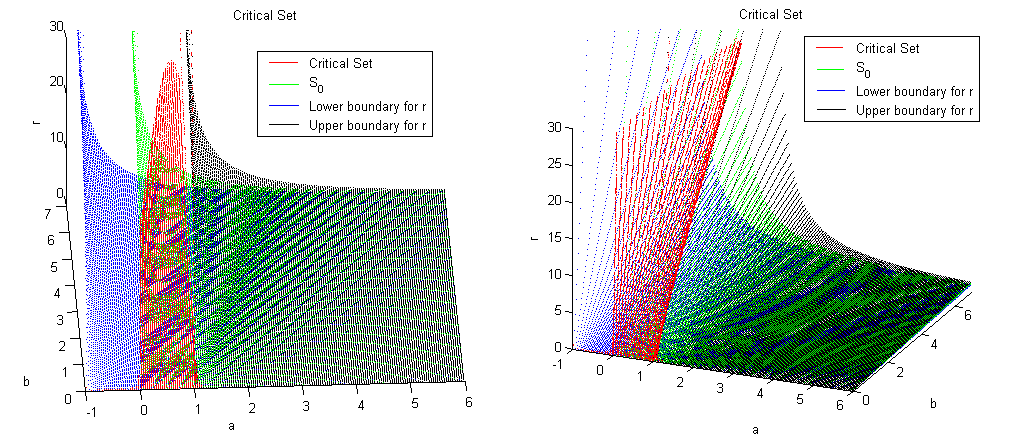}
	\caption{Critical parameter set}
	\label{fig:CriticalCase}
\end{figure}

First we consider the set corresponding to SSCs with $\cos(\alpha^*) = 0$ given by
\begin{align*}
	S_0 := \left\{(a,b,r^*) \in \R^3 \left| \, a > 0, b > 0, \frac{b}{r^*} = a\right.\right\}
		 = \left\{\left.\left(a,b,\frac{b}{a}\right) \in \R^3 \right| \, a > 0, b > 0\right\}.
\end{align*}
Let $(a_1,b_1) \in (0,\infty) \times (0,\infty)$ be arbitrary and consider the variation
\begin{align*}
	(a(t),b(t)): [0,1] \longrightarrow (0,\infty) \times (0,\infty): t \longmapsto (a_0 + t (a_1 - a_0), b_0 + t (b_1 - b_0)).
\end{align*}
We know that for $\left(a_0,b_0,\frac{b_0}{a_0}\right) \in S_0$ as above the eigenvalues of $A_0(a_0,b_0)$ are all positive and $S_0$ does not intersect the critical set
\begin{align*}
	S_{crit} := \left\{(a,b,r^*) \in \R^3 \left| \, a > -1, b > 0, r^* \in I_r, b \leq C_{crit}\right.\right\}.
\end{align*}
Thus the eigenvalues remain positive for all $(a(t),b(t))$ with $t \in [0,1]$. \\
Now we consider the SSCs corresponding to $\cos(\alpha^*) > 0$ given by
\begin{align*}
	S_+ := \left\{(a,b,r^*) \in \R^3 \left| \, a > -1, b > 0, \frac{b}{r^*} > a\right.\right\}.
\end{align*}
Now let $(a_1,b_1,r_1) \in S_+$ be arbitrary and use $b_0 := b_1$ and $a_0 := \frac{b_1}{r_1}$ as a starting point. Then $(a_0,b_0,r_1) = \left(a_0,b_0,\frac{b_0}{a_0}\right) \in S_0$ and therefore the eigenvalues of $A_0(a_0,b_0)$ are positive. While decreasing $a_0$ to $a_1$ - which is equivalent to increasing $\cos(\alpha^*)$ from $0$ to some positive value - it is still not possible to intersect $S_{crit}$, since $S_{crit}$ only allows for $\cos(\alpha^*) < 0$. Hence the eigenvalues remain also positive for this variation. This especially covers all cases where $a \leq 0$. \\
Finally we want to cover all the cases that are left over. For this define the set of all surfaces with $\cos(\alpha^*) < 0$ as
\begin{align*}
	S_- := \left\{(a,b,r^*) \in \R^3 \left| \, a > 0, b > 0, \frac{b}{r^*} < a\right.\right\}
\end{align*}
and let $(a_2,b_2,r_2) \in S_-$ be given and satisfy
\begin{align*}
	b_2 > -\frac{r_2}{3} \sqrt{1 - \left(\frac{b_2}{r_2} - a_2\right)^2} \left(\frac{b_2}{r_2} - a_2\right).
\end{align*}
Again we try to find a path that connects $(a_2,b_2,r_2) \in S_-$ with a configuration, where we know that all eigenvalues are positive. We remark that due to $(a_2,b_2,r_2) \in S_-$ we know that $r_2 > \frac{b_2}{a_2}$. Decreasing $r_2$ to $\frac{b_2}{a_2}$ brings us to a configuration in $S_0$, where we have only positive eigenvalues. During this decreasing process it is not possible that
\begin{align*}
	b_2 > -\frac{r_2}{3} \sqrt{1 - \left(\frac{b_2}{r_2} - a_2\right)^2} \left(\frac{b_2}{r_2} - a_2\right) = \frac{1}{3} \sqrt{1 - \left(\frac{b_2}{r_2} - a_2\right)^2} (a_2 r_2 - b_2)
\end{align*}
gets violated, since $\sqrt{1 - \left(\frac{b_2}{r_2} - a_2\right)^2} \geq 0$ and $a_2 r_2 - b_2$ is decreasing with $r_2$. This shows that the positivity of the eigenvalues is also valid for $(a_2,b_2,r_2)$. Hence assumption (d) of Theorem \ref{thm:GPLStability} is satisfied for all SSCs and parameters $(a,b) \in (-1,\infty) \times (0,\infty)$ that satisfy $b > C_{crit}$.

After we checked all assumptions required for the GPLS, we finally apply Theorem \ref{thm:GPLStability} and obtain the main result of this paper.

\begin{thm}[Stability of spherical caps]\label{thm:StabilitySCs}
Let $a > -1$, $b > 0$ and $4 < p < \infty$. Moreover, assume $\Gamma^*$ to be a stationary spherical cap with radius $R^*$ and contact angle $\alpha^*$ that satisfies $b > -\frac{1}{3} R^* \sin(\alpha^*)^2 \cos(\alpha^*)$. Then $\rho \equiv 0$ is stable in
\begin{align*}
	\tilde{X} := \left\{\rho \in W^{2-\frac{2}{p}}_p(\Gamma^*) \left| \rho|_{\d \Gamma^*} \in W^{3-\frac{3}{p}}_p(\d \Gamma^*)\right.\right\}
\end{align*}
and there exists $\delta > 0$ such that the unique solution $\rho(t)$ of the system (\ref{eq:Flow1})-(\ref{eq:Flow2}) with initial value $\rho_0 \in \tilde{X}$ satisfying $\norm{\rho_0}_{W^{2-\frac{2}{p}}_p(\Gamma^*)} + \norm{\rho_0|_{\d \Gamma^*}}_{W^{3-\frac{3}{p}}_p(\d \Gamma^*)} < \delta$ exists on $\R^+$ and converges at an exponential rate to some $\rho_\infty$, which parametrizes a stationary spherical cap as well.
\end{thm}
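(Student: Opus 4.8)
The plan is to derive Theorem~\ref{thm:StabilitySCs} as a direct application of the generalized principle of linearized stability, Theorem~\ref{thm:GPLStability}, once all of its hypotheses have been collected from the preceding sections. First I would recall the reformulation from Section~\ref{sec:GPLS}: with $v_0 := (\rho_0, \rho_0|_{\d \Gamma^*})$ the geometric flow \eqref{eq:Flow1}--\eqref{eq:Flow2} (with $\Gamma_\rho(0) = \Gamma_{\rho_0}$) is equivalent to the abstract quasilinear equation \eqref{eq:AbstractEvolutionEquation1}--\eqref{eq:AbstractEvolutionEquation2} around $v^* \equiv 0$, with $A \in C^1(V,\mathcal{L}(X_1,X_0))$, $F \in C^1(V,X_0)$, and $A_0 = A'(0) - F'(0)$. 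By Theorem~\ref{thm:Semigroup1} the principal part $A'(0)$ has maximal $L_p$-regularity on finite intervals and, up to a shift, on $\R_+$, and by Theorem~\ref{thm:Semigroup2} the full linearization $-A_0$ still generates an analytic semigroup; so the only remaining task is to verify the normal-stability conditions (a)--(d).

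For (a) I would simply invoke Lemma~\ref{lem:Manifold}, which says that near $v^*$ the equilibrium set $\mathcal{E}$ is a three-dimensional $C^1$-manifold in $X_1$. Condition (b) then follows from the general inclusion $T_0\mathcal{E} \subseteq \mathcal{N}(A_0)$ (from \cite{PSZ09}) together with the equality of dimensions $3 = \dim \mathcal{E} = \dim \mathcal{N}(A_0)$, valid under the standing assumption $b > C_{crit}$ established when computing $\mathcal{N}(A_0)$. For (c) I would use the explicit projection $P$ onto $\mathcal{N}(A_0) = \spn\{v_0,v_1,v_2\}$ satisfying $P^2 = P$ and $P A_0 = 0 = A_0 P$; Lemma~\ref{lem:ProjectionNullspace} gives $\mathcal{N}(A_0) = \mathcal{N}(A_0^2)$ and Lemma~\ref{lem:ProjectionSemiSimple}, whose hypothesis on $\sigma(-A_0)$ is furnished by Remark~\ref{rem:MuInRhoA0}, yields $X_0 = \mathcal{N}(A_0) \oplus \mathcal{R}(A_0)$, i.e. semi-simplicity of the eigenvalue $0$. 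Condition (d) is the heart of the matter and I would assemble it from the last part of Section~\ref{sec:Application}: every eigenvalue of $A_0$ is real and contained in $\sigma(\Delta^B) \cup \{-2/{R^*}^2\}$; continuity of the isolated eigenvalues in $(a,b)$ and in the homotopy parameter $d$, together with the persistence of $\dim \mathcal{N}(A_0) = 3$ along these deformations, forces that no nonzero eigenvalue can pass through $0$; at the half-sphere base point $(a_0,b_0)$ the eigenvalues are $\lambda_k = (k(k+1)-2)/{R^*}^2 \ge 0$ for $k \ge 1$, while $\lambda_0 = -2/{R^*}^2$ is excluded as an eigenvalue of $A_0$ by the energy identity \eqref{eq:NormContradiction} and the choice $b_0 \in (0,1)$.

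Having checked (a)--(d), Theorem~\ref{thm:GPLStability} gives at once: $v^* \equiv 0$ is stable in $X_\gamma$, and there is $\delta > 0$ such that any $v_0 \in X_\gamma$ with $\|v_0\|_{X_\gamma} < \delta$ produces a global solution converging exponentially in $X_\gamma$ to some $v_\infty \in \mathcal{E}$. To finish, I would translate this back to the statement about $\rho$. Using the trace identification $X_\gamma \subseteq \{(u,\varrho) \in W^{2-\frac2p}_p(\Gamma^*) \times W^{3-\frac3p}_p(\d \Gamma^*) \mid u|_{\d \Gamma^*} = \varrho\}$ from Section~\ref{sec:GPLS}, the map $\rho_0 \mapsto (\rho_0, \rho_0|_{\d \Gamma^*})$ identifies $\tilde X$ with $X_\gamma$ with equivalent norms, so the smallness condition in the theorem is exactly the one provided by the GPLS; and since $v_\infty$ lies in a neighbourhood of $v^*$ in which, by the remark after Lemma~\ref{lem:Manifold}, $\mathcal{E}$ consists only of stationary spherical caps, the limit $\rho_\infty$ parametrizes an SSC.

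The main obstacle is condition (d): unlike the classical principle of linearized stability, one cannot simply compute the spectrum, because the equilibria form a genuine three-dimensional manifold; the positivity of $\sigma(A_0)\setminus\{0\}$ has to be propagated from the explicitly solvable half-sphere configuration through two continuous homotopies (first in the weight $d$ linking $\Delta^d$ to $\Delta^B$, then in $(a,b)$ sweeping out $S_0 \cup S_+ \cup S_-$), and the argument that eigenvalues cannot cross $0$ rests precisely on the a priori fact that $\dim\mathcal{N}(A_0)$ stays equal to $3$. Keeping the parameter-set bookkeeping (the sets $S_0, S_+, S_-$ and the critical set $S_{crit}$) straight, so that every admissible $(a,b)$ with $b > C_{crit}$ is actually reached by an admissible path, is the delicate point.
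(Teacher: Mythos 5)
Your proposal is correct and follows exactly the route the paper takes: the theorem is proved by verifying conditions (a)--(d) of Theorem~\ref{thm:GPLStability} via the results accumulated in Sections~\ref{sec:MaxReg} and~\ref{sec:Application} (Lemma~\ref{lem:Manifold} for (a), the dimension count for (b), the projection argument with Lemmas~\ref{lem:ProjectionNullspace}--\ref{lem:ProjectionSemiSimple} for (c), and the two-stage spectral continuation argument for (d)), and then translating $X_\gamma$ back to $\tilde X$ and invoking the remark after Lemma~\ref{lem:Manifold} to identify the limit as a stationary spherical cap. The paper's stated proof is a one-line reference to precisely this assembly, so your more detailed account matches it in substance.
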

\begin{proof}
Reformulating the statement of Theorem \ref{thm:GPLStability} to the specific case of SCs as presented in this section.
\end{proof}

\printbibliography												

\end{document}